\def\csname ver@fixltx2e.sty\endcsname{}
\newlength{\dhatheight}
\newcommand{\doublehat}[1]{%
    \settoheight{\dhatheight}{\ensuremath{\hat{#1}}}%
    \addtolength{\dhatheight}{-0.25ex}%
    \hat{\vphantom{\rule{1pt}{\dhatheight}}%
    \smash{\hat{#1}}}}
\newcolumntype{P}[1]{>{\centering\arraybackslash}p{#1}}
\newtheorem{theorem}{Theorem}
\newtheorem{lemma}{Lemma}
\newtheorem{remark}{Remark}
\def\BibTeX{{\rm B\kern-.05em{\sc i\kern-.025em b}\kern-.08em
    T\kern-.1667em\lower.7ex\hbox{E}\kern-.125emX}}
\begin{document}

\title{Inverse Extended Kalman Filter --- Part II: Highly Non-Linear and Uncertain Systems}

\author{Himali Singh, Arpan Chattopadhyay$^\ast$ and Kumar Vijay Mishra$^\ast$
\thanks{$^\ast$A. C. and K. V. M. have made equal contributions.}
\thanks{H. S. and A. C. are with the Electrical Engineering Department, Indian Institute of Technology (IIT) Delhi, India. A. C. is also associated with the Bharti School of Telecommunication Technology and Management, IIT Delhi. Email: \{eez208426, arpanc\}@ee.iitd.ac.in.} 
\thanks{K. V. M. is with the United States DEVCOM Army Research Laboratory, Adelphi, MD 20783 USA. E-mail: kvm@ieee.org.}
\thanks{A. C. acknowledges support via the professional development fund and professional development  allowance from IIT Delhi, grant no. GP/2021/ISSC/022 from I-Hub Foundation for Cobotics and grant no. CRG/2022/003707 from Science and Engineering Research Board (SERB), India. H. S. acknowledges support via Prime Minister Research Fellowship. K. V. M. acknowledges support from the National Academies of Sciences, Engineering, and Medicine via Army Research Laboratory Harry Diamond Distinguished Fellowship.}
}

\maketitle

\begin{abstract}
Counter-adversarial system design problems have lately motivated the development of inverse Bayesian filters. For example, \textit{inverse Kalman filter} (I-KF) has been recently formulated to estimate the adversary's Kalman-filter-tracked estimates and hence, predict the adversary's future steps. The purpose of this paper and the companion paper (Part I) is to address the inverse filtering problem in non-linear systems by proposing an \textit{inverse extended Kalman filter} (I-EKF). The companion paper proposed the theory of I-EKF (with and without unknown inputs) and I-KF (with unknown inputs). In this paper, we develop this theory for highly non-linear models, which employ second-order, Gaussian sum, and dithered forward EKFs. In particular, we derive theoretical stability guarantees for the inverse second-order EKF using the bounded non-linearity approach. To address the limitation of the standard I-EKFs that the system model and forward filter are perfectly known to the defender, we propose reproducing kernel Hilbert space-based EKF to learn the unknown system dynamics based on its observations, which can be employed as an inverse filter to infer the adversary's estimate. Numerical experiments demonstrate the state estimation performance of the proposed filters using recursive Cram\'{e}r-Rao lower bound as a benchmark.
\end{abstract}

\begin{IEEEkeywords}
Bayesian filtering, counter-adversarial systems, extended Kalman filter, inverse filtering, non-linear processes.
\end{IEEEkeywords}

\section{Introduction}\label{sec:introduction}
In the companion paper (Part I) \cite{singh2022inverse_part1}, we introduced \textit{inverse cognition} \cite{krishnamurthy2019how} in a non-linear setting and developed inverse extended Kalman filters (I-EKFs). Recall that 
the inverse cognition problem involves two agents: `defender' (e.g., an intelligent target) and an `adversary' (e.g., a radar) equipped with a (forward) Bayesian tracker. The forward filter provides a posterior distribution of the underlying state to the adversary based on its noisy observations. The cognitive adversary further adapts its actions based on this estimate. For instance, a cognitive radar adapts its waveform to enhance target detection and tracking \cite{bell2015cognitive,sharaga2015optimal}. The defender observes these adversary's actions to predict its future actions in a Bayesian sense and hence, requires an estimate of the adversary's estimate. The inverse cognition problem is motivated by the design of counter-adversarial systems, e.g., an intelligent target observing its adversarial radar's actions to guard against the latter\cite{krishnamurthy2019how}. Other applications include interactive learning\cite{krishnamurthy2019how}, fault diagnosis, cyber-physical security\cite{mattila2017inverse}, and inverse reinforcement learning\cite{ng2000algorithms}.

An inverse Bayesian filter provides an estimate of the forward Bayesian filter's posterior distribution given its noisy measurements. In particular, \cite{mattila2020inverse} and \cite{krishnamurthy2019how} developed, respectively, the inverse hidden Markov model (I-HMM) filter and inverse Kalman filter (I-KF) 
for linear Gaussian state-space models. The tracked estimates from these inverse filters were then used to design smart interference and force the adversarial radar to change its waveform in \cite{krishnamurthy2021adversarial,kang2023}. In practice, many engineering processes are non-linear and employ extended Kalman filter (EKF) based on local linearization at the state estimates \cite{haykin2004kalman,simon2006optimal}. 

In the companion paper (Part I) \cite{singh2022inverse_part1}, we developed inverse EKF (I-EKF) for a non-linear inverse cognition problem. 
However, the standard EKF has its limitations. Unlike KF, the filter and (filter) gain equations of EKF are not decoupled and hence offline computations of these EKF quantities are not possible. The EKF performs poorly when the dynamic system is significantly non-linear. It is also very sensitive to initialization and may even completely fail \cite{tenney1977tracking}. Recently, KF's initialization has been studied rigorously in \cite{zhao2020trial}, while \cite{salvoldi2018process} deals with process noise covariance design. However, they do not consider the non-linear EKF case. These EKF drawbacks have led to the development of several advanced variants, each aiming toward improving either the estimation accuracy (higher-order EKFs \cite{jazwinski2007stochastic,wang2019second}, Gaussian-sum EKF \cite{tam1977gaussian}), convergence (iterated EKF \cite{wishner1969comparison}), stability (dithered EKF \cite{weiss1980improved}) or practical feasibility (hybrid EKF\cite{simon2006optimal}). The literature also reports grid-based EKF \cite{bucy1971digital} that is a pre-cursor of particle filter. A comparison of non-linear KFs is available in \cite{schwartz1968computational,netto1978optimal}. These filters reduce the linearization errors that are inherent to the EKF and may provide an improved estimation at the cost of higher complexity and computations. In particular, this paper focuses on developing the I-EKF theory for highly nonlinear systems that may employ some of the aforementioned EKF variants as the forward filter.

Further, the inverse stochastic filters developed in \cite{mattila2020inverse,krishnamurthy2019how} and the companion paper (Part I) \cite{singh2022inverse_part1} assume perfect system model information on the part of both adversary and defender. However, in practice, the defender 
may not have information about the adversary's forward filter and 
the strategy employed by the adversary to adapt its actions. Similarly, the adversary may also lack complete system information, including the noise distributions. In such 
situations, the performance and applicability of inverse filters is limited. The uncertainty in system parameters or noise statistics may result in large estimation errors or even filter divergence \cite{vila2021robust,ge2016performance}. Prior works \cite{vila2021robust,huang2017novel,zhu2021adaptive,yi2021robust,mohamed2011robust} proposed various adaptive filters to reduce the sensitivity of KF to system uncertainties. However, the general non-linear filtering problem is not considered. 

In this paper, we also consider the case of unknown forward filter and system model. Among prior works, for a forward non-linear filtering problem with unknown system model, \cite{liu2009extended} developed extended kernel recursive least squares (Ex-KRLS) method by transforming the input space to a kernel-based feature space. The KRLS\cite{engel2004kernel} was coupled with EKF to learn the unknown non-linear measurement model in \cite{zhu2011extended,singh2022rkhs}, but with a known linear state-transition. In \cite{zhu2013learning}, the KF algorithm was reformulated for non-linear state transition functions in reproducing kernel Hilbert space (RKHS) using a conditional embedding operator, but linear observations were assumed. Particle filter based approaches for state estimation and system identification in non-linear system setting were considered in \cite{tobar2015unsupervised,berntorp2021online}. For linear Gaussian state-space models, an iterative expectation maximization (EM)-based parameter learning was proposed in \cite{wen2012data}. We adopt this iterative EM for non-linear parameter learning in our RKHS-based EKF.
Our main contributions in this paper (Part II) are:\\
\textbf{1) Inverses of EKF variants.} We develop inverse second-order EKF (I-SOEKF) and provide sufficient conditions for its stability. In the process, we obtain stability conditions for forward SOEKF as well and derive the one-step prediction formulation of SOEKF. The presence of second-order terms poses additional challenges in deriving the theoretical stability guarantees for both forward and inverse SOEKF. In addition, we obtain the inverse of Gaussian-sum EKF (GS-EKF) which comprises of several individual EKFs \cite{tam1977gaussian,alspach1972nonlinear}. Here, the \textit{a posteriori} density function of the state given the observations is approximated by a sum of Gaussian density functions each of which has its own separate EKF. GS-EKF and hence, its inverse, not only provides enhanced estimation accuracy but is also a popular method to handle non-Gaussianity in system models. In situations where the estimation error is small, the \textit{a posteriori} density is approximated adequately by one Gaussian density, and the Gaussian sum filter reduces to the EKF; the same is true for their inverses. Finally, we also examine the inverse of dithered EKF (DEKF) \cite{weiss1980improved}, which has been shown to perform better when non-linearities are cone-bounded. This filter introduces dither signals prior to the non-linearities to tighten the cone-bounds and hence, improve stability properties. We validate the estimation errors of all inverse EKFs through extensive numerical experiments with recursive Cram\'{e}r-Rao lower bound (RCRLB) \cite{tichavsky1998posterior} as the performance metric.\\
\textbf{2) RKHS-EKF.} We consider the case when both the state-transition and observation models are unknown to the agent employing a stochastic filter in a non-linear system. In this context, we propose RKHS-based EKF which may be employed by both adversary and defender, respectively, to estimate defender's state and adversary's state estimate. In particular, the unknown non-linear functions are represented using kernel function approximation. The EKF provides required state estimates while an online approximate EM is used to learn the system parameters. The presence of non-linear transformations in expectation computation makes the filter derivation non-trivial. \\
\textbf{3) Mismatched forward and inverse filters.} The inverse filter theory formulated in the companion paper (Part I) \cite{singh2022inverse_part1} assumed a known adversary's forward filter. If the defender is not certain about the adversary's forward filter, our proposed RKHS-EKF is helpful in estimating the adversary's state estimate. However, when the defender employs an inverse filter assuming a forward filter that is not the same as the adversary's true forward filter, our numerical experiments show that the defender is still able to estimate the adversary's state estimate with reasonable accuracy. Further, a sophisticated inverse filter may even provide better performance at the cost of computational efforts.

The rest of the paper is organized as follows. In the next section, we describe the system model for the inverse cognition problem. We then discuss the inverses of some EKF variants in Section~\ref{sec:extensions}. The stability conditions for forward and inverse SOEKF are provided in Section~\ref{sec:stability}. In Section~\ref{sec:RKHS-EKF}, we relax the perfect system information assumption and derive RKHS-EKF. Finally, in Section~\ref{sec:simulations}, we demonstrate the performance of the proposed inverse filters considering various numerical experiments before concluding in Section~\ref{sec:summary}.

Throughout the paper, we reserve boldface lowercase and uppercase letters for vectors (column vectors) and matrices, respectively. The notation used to denote the $i$-th component of vector $\mathbf{a}$ and the $(i,j)$-th component of matrix $\mathbf{A}$ are $\left[\mathbf{a}\right]_{i}$ and $\left[\mathbf{A}\right]_{i,j}$, respectively, while $[\mathbf{A}]_{(i_{1}:i_{2},j_{1}:j_{2})}$ represents the sub-matrix of $\mathbf{A}$ consisting of rows $i_{1}$ to $i_{2}$ and columns $j_{1}$ to $j_{2}$. We use $(\cdot)^T$ for transpose operation while $||\cdot||_{2}$ represents the $l_2$ norm for a vector. For matrix $\mathbf{A}$, the trace, determinant, rank, spectral norm and maximum row sum norm are denoted by $\textrm{Tr}(\mathbf{A})$, $|\mathbf{A}|$, $\textrm{rank}(\mathbf{A})$, $||\mathbf{A}||$, and $\|\mathbf{A}\|_{\infty}$, respectively. The inequality $\mathbf{A}\preceq\mathbf{B}$  for matrices $\mathbf{A}$ and $\mathbf{B}$  means that $\mathbf{B}-\mathbf{A}$ is a positive semidefinite (p.s.d.) matrix. The $\mathbb{R}^{m \times n}$ Jacobian matrix of a function $f:\mathbb{R}^{n}\rightarrow\mathbb{R}^{m}$ is denoted by $\nabla f$. Similarly, $\nabla f$ and $\nabla^{2}f$ denote the gradient vector ($\mathbb{R}^{n\times 1}$) and Hessian matrix ($\mathbb{R}^{n\times n}$), respectively, for a function $f:\mathbb{R}^{n}\rightarrow\mathbb{R}$. $\mathbf{I}_{n}$ denotes a $n\times n$ identity matrix while $\mathbf{0}_{n\times m}$ denotes a $n\times m$ all zero matrix. The notation $\lbrace a_{i}\rbrace_{i_{1}\leq i\leq i_{2}}$ denotes a set of elements indexed by integer $i$. 
The Gaussian random variable is represented as $\mathbf{x} \sim \mathcal{N}(\boldsymbol{\mu},\mathbf{Q})$ with mean  $\boldsymbol{\mu}$ and covariance matrix $\mathbf{Q}$ while $x \sim \mathcal{U}[u_l,u_u]$ means a uniform distribution over $[u_l,u_u]$. The covariance of random variable $\mathbf{x}$ is denoted by $\textrm{Cov}(\mathbf{x})$ while $\textrm{Cov}(\mathbf{x},\mathbf{y})$ denotes its cross-covariance with random variable $\mathbf{y}$.

\section{Desiderata for Inverse Cognition}
\label{sec:background}
Throughout the paper, we focus on discrete-time models. Recall from the companion paper (Part I) \cite{singh2022inverse_part1} that the defender's discrete-time stochastic state process $\{\mathbf{x}_k\}_{k \geq 0}$, where $\mathbf{x}_k \in \mathbb{R}^{n \times 1}$ is the state at the $k$-th time instant, evolves as
\par\noindent\small
\begin{align}
\mathbf{x}_{k+1}=f(\mathbf{x}_{k})+\mathbf{w}_{k},\label{eqn: state transition x}
\end{align}
\normalsize
where $\mathbf{w}_{k}\sim\mathcal{N}(\mathbf{0}_{n\times 1},\mathbf{Q})$ is the process noise with covariance matrix $\mathbf{Q} \in \mathbb{R}^{n\times n}$. The defender knows its own state perfectly, while the adversary observes the defender's state in noise as $\mathbf{y}_k \in \mathbb{R}^{p \times 1}$ at time $k$ as
\par\noindent\small
\begin{align}
\mathbf{y}_{k}=h(\mathbf{x}_{k})+\mathbf{v}_{k},\label{eqn: observation y}
\end{align}
\normalsize
where $\mathbf{v}_{k}\sim\mathcal{N}(\mathbf{0}_{p\times 1},\mathbf{R})$ is the adversary's measurement noise with covariance matrix $\mathbf{R} \in \mathbb{R}^{p\times p}$.

The adversary's (forward) stochastic filter computes the estimate $\hat{\mathbf{x}}_k$ of the defender's state $\mathbf{x}_{k}$ using $\{\mathbf{y}_j\}_{1 \leq j \leq k}$. The adversary then takes an action $g(\hat{\mathbf{x}}_{k}) \in \mathbb{R}^{n_a \times 1}$ using this estimate, whose noisy observation made by the defender is
\par\noindent\small
\begin{align}
\mathbf{a}_{k}&=g(\hat{\mathbf{x}}_{k})+\bm{\epsilon}_{k},\label{eqn: observation a}
\end{align}
\normalsize
where $\bm{\epsilon}_{k}\sim \mathcal{N}(\mathbf{0}_{n_{a}\times 1},\bm{\Sigma_{\epsilon}})$ is the defender's measurement noise with covariance matrix $\bm{\Sigma}_{\epsilon} \in \mathbb{R}^{n_{a}\times n_{a}}$. Finally, in the inverse stochastic filter, the defender computes the estimate $\doublehat{\mathbf{x}}_k \in \mathbb{R}^{n \times 1}$ of the estimate $\hat{\mathbf{x}}_k $ using $\{\mathbf{a}_j, \mathbf{x}_j\}_{1 \leq j \leq k}$, with $\overline{\bm{\Sigma}}_{k}$ as the associated error covariance matrix of $\doublehat{\mathbf{x}}_{k}$. The noise processes $\{\mathbf{w}_{k}\}_{k \geq 0}$, $\{\mathbf{v}_{k}\}_{k \geq 1}$ and $\{\bm{\epsilon}_{k}\}_{k \geq 1}$ are mutually independent and i.i.d. across time. Note that the adversary and defender are entirely different agents employing independent sensors to observe each other. Furthermore, in the inverse filtering problem, the adversary is unaware that the defender is observing the former. Also, the process noise $\mathbf{w}_{k}$ represents the modeling uncertainties in state transition \eqref{eqn: state transition x}. Hence, these noises are mutually independent. We further assume that the adversary does not seek to hide its actions from the defender, which is a more challenging problem recently addressed in \cite{lourencco2020protect,lourencco2021hidden} and not the focus of this paper. Further, other recent works \cite{pattanayak2022inverse,pattanayak2022meta} have considered the case when the adversary is also aware of the defender's inverse cognitive nature.

If the adversary knows this discrete-time system model perfectly, standard EKF is the simplest and most widely used forward filter that can be employed by the adversary. With EKF as the known adversary's forward filter, we developed I-EKF in the companion paper (Part I) \cite{singh2022inverse_part1}. We briefly review the I-EKF recursions here for further discussions in subsequent sections.

Based on the forward EKF recursions and observation \eqref{eqn: observation y}, a time-varying state transition for the I-EKF is obtained as
\par\noindent\small
\begin{align}
    &\hat{\mathbf{x}}_{k+1}=\widetilde{f}_{k}(\hat{\mathbf{x}}_{k},\mathbf{x}_{k+1},\mathbf{v}_{k+1})\nonumber\\
    &=f(\hat{\mathbf{x}}_{k})-\mathbf{K}_{k+1}h(f(\hat{\mathbf{x}}_{k}))+\mathbf{K}_{k+1}h(\mathbf{x}_{k+1})+\mathbf{K}_{k+1}\mathbf{v}_{k+1},\label{eqn:IEKF state transition}
\end{align}
\normalsize
with the true state $\mathbf{x}_{k+1}$ and $\mathbf{v}_{k+1}$, respectively, being a known exogenous input and process noise term for the defender's I-EKF. Here, $\mathbf{K}_{k+1}$ is the forward EKF's gain matrix, which the I-EKF treats as a time-varying parameter of state transition \eqref{eqn:IEKF state transition}, approximating it using its own estimates as described in the companion paper (Part I) \cite{singh2022inverse_part1}. Finally, using state transition \eqref{eqn:IEKF state transition} and observation \eqref{eqn: observation a}, I-EKF follows from the EKF recursions as
\par\noindent\small
\begin{align}
    &\textit{Prediction:}\;\doublehat{\mathbf{x}}_{k+1|k}=\widetilde{f}_{k}(\doublehat{\mathbf{x}}_{k},\mathbf{x}_{k+1},\mathbf{0}_{p\times 1}),\nonumber\\
    &\overline{\bm{\Sigma}}_{k+1|k}=\widetilde{\mathbf{F}}^{x}_{k}\overline{\bm{\Sigma}}_{k}(\widetilde{\mathbf{F}}^{x}_{k})^{T}+\overline{\mathbf{Q}}_{k},\nonumber\\
    &\textit{Update:}\;\overline{\mathbf{S}}_{k+1}=\mathbf{G}_{k+1}\overline{\bm{\Sigma}}_{k+1|k}\mathbf{G}_{k+1}^{T}+\bm{\Sigma}_{\epsilon},\;\;\hat{\mathbf{a}}_{k+1|k}=g(\doublehat{\mathbf{x}}_{k+1|k}),\nonumber\\
    &\doublehat{\mathbf{x}}_{k+1}=\doublehat{\mathbf{x}}_{k+1|k}+\overline{\bm{\Sigma}}_{k+1|k}\mathbf{G}_{k+1}^{T}\overline{\mathbf{S}}_{k+1}^{-1}(\mathbf{a}_{k+1}-\hat{\mathbf{a}}_{k+1|k}),\label{eqn:IEKF state update}\\
    &\overline{\bm{\Sigma}}_{k+1}=\overline{\bm{\Sigma}}_{k+1|k}-\overline{\bm{\Sigma}}_{k+1|k}\mathbf{G}_{k+1}^{T}\overline{\mathbf{S}}_{k+1}^{-1}\mathbf{G}_{k+1}\overline{\bm{\Sigma}}_{k+1|k},\label{eqn:IEKF covariance update}
\end{align}
\normalsize
where $\widetilde{\mathbf{F}}^{x}_{k}\doteq\nabla_{\mathbf{x}}\widetilde{f}_{k}(\mathbf{x},\mathbf{x}_{k+1},\mathbf{0}_{p\times 1})|_{\mathbf{x}=\doublehat{\mathbf{x}}_{k}}$, $\mathbf{G}_{k+1}\doteq\nabla_{\mathbf{x}}g(\mathbf{x})_{\mathbf{x}=\doublehat{\mathbf{x}}_{k+1|k}}$, $\widetilde{\mathbf{F}}^{v}_{k}\doteq\nabla_{\mathbf{v}}\widetilde{f}_{k}(\doublehat{\mathbf{x}}_{k},\mathbf{x}_{k+1},\mathbf{v})|_{\mathbf{v}=\mathbf{0}_{p\times 1}}$, and $\overline{\mathbf{Q}}_{k}=\widetilde{\mathbf{F}}^{v}_{k}\mathbf{R}(\widetilde{\mathbf{F}}^{v}_{k})^{T}$. 


\section{Inverses for Highly non-linear systems}
\label{sec:extensions}
Among various highly nonlinear cases discussed earlier, we focus on the inverses of two-step SOEKF, GS-EKF, and DEKF. In the process, we also derive the inverse of one-step prediction formulation of SOEKF. The one-step prediction formulation is about predicting the next state and analytically more useful while deriving the stability conditions of the filter. Although the two-step prediction-update recursion differs from the one-step prediction formulation in performance and transient behaviour, both have similar convergence properties \cite[Sec.~III]{reif1999stochastic}.

\subsection{Inverse SOEKF}
\label{sec:higher order EKF}
While the EKF formulations are limited to only first-order Taylor series expansion, the forward and inverse filters of SOEKF also include second-order terms. 
Here, we define the Jacobians as $\mathbf{F}_{k}\doteq\nabla_{\mathbf{x}}f(\mathbf{x})\vert_{\mathbf{x}=\hat{\mathbf{x}}_{k}}$ and 
$\mathbf{H}_{k+1}\doteq\nabla_{\mathbf{x}}h(\mathbf{x})\vert_{\mathbf{x}=\hat{\mathbf{x}}_{k+1|k}}$.
\subsubsection{Forward filter}
At each time step, the SOEKF first predicts the current state $\mathbf{x}_{k+1}$ as $\hat{\mathbf{x}}_{k+1|k}$ with the associated prediction error covariance matrix $\bm{\Sigma}_{k+1|k}$ using its previous state estimate. In the update step, the predicted observation estimate $\hat{\mathbf{y}}_{k+1|k}$ and associated error covariance matrix $\mathbf{S}_{k+1}$ are computed based on the predicted state estimate. Finally, the current updated state estimate $\hat{\mathbf{x}}_{k+1}$ and its error covariance matrix $\bm{\Sigma}_{k+1}$ are obtained considering the current observation $\mathbf{y}_{k+1}$ and the predicted estimates. Denoting the $i$-th Euclidean basis vectors in $\mathbb{R}^{n\times 1}$ and $\mathbb{R}^{p\times 1}$ by $\widetilde{\mathbf{a}}_{i}$ and $\mathbf{b}_{i}$, respectively, the forward SOEKF recursions are \cite{bar2004estimation}
\par\noindent\small
\begin{align}
&\textit{Prediction:}\;\hat{\mathbf{x}}_{k+1|k}=f(\hat{\mathbf{x}}_{k})+\frac{1}{2}\sum_{i=1}^{n}\widetilde{\mathbf{a}}_{i}\textrm{Tr}\left(\nabla^{2}\left[f(\hat{\mathbf{x}}_{k})\right]_{i}\bm{\Sigma}_{k}\right),\label{eqn: forward SOEKF x prediction}\\
&\bm{\Sigma}_{k+1|k}=\mathbf{F}_{k}\bm{\Sigma}_{k}\mathbf{F}_{k}^{T}+\mathbf{Q}\nonumber\\
&\hspace{1.5cm}+\frac{1}{2}\sum_{i=1}^{n}\sum_{j=1}^{n}\widetilde{\mathbf{a}}_{i}\widetilde{\mathbf{a}}_{j}^{T}\textrm{Tr}\left(\nabla^{2}\left[f(\hat{\mathbf{x}}_{k})\right]_{i}\bm{\Sigma}_{k}\nabla^{2}\left[f(\hat{\mathbf{x}}_{k})\right]_{j}\bm{\Sigma}_{k}\right),\nonumber\\
&\textit{Update:}&\nonumber\\
&\hat{\mathbf{y}}_{k+1|k}=h(\hat{\mathbf{x}}_{k+1|k})+\frac{1}{2}\sum_{i=1}^{p}\mathbf{b}_{i}\textrm{Tr}\left(\nabla^{2}\left[h(\hat{\mathbf{x}}_{k+1|k})\right]_{i}\bm{\Sigma}_{k+1|k}\right),\label{eqn: forward SOEKF y prediction}\\
&\mathbf{S}_{k+1}=\mathbf{H}_{k+1}\bm{\Sigma}_{k+1|k}\mathbf{H}_{k+1}^{T}+\mathbf{R}\nonumber\\
&\hspace{-0.3cm}+\frac{1}{2}\sum_{i=1}^{p}\sum_{j=1}^{p}\mathbf{b}_{i}\mathbf{b}_{j}^{T}\textrm{Tr}\left(\nabla^{2}\left[h(\hat{\mathbf{x}}_{k+1|k})\right]_{i}\bm{\Sigma}_{k+1|k}\nabla^{2}\left[h(\hat{\mathbf{x}}_{k+1|k})\right]_{j}\bm{\Sigma}_{k+1|k}\right),\nonumber\\
&\mathbf{K}_{k+1}=\bm{\Sigma}_{k+1|k}\mathbf{H}_{k+1}^{T}\mathbf{S}_{k+1}^{-1},\nonumber\\
&\hat{\mathbf{x}}_{k+1}=\hat{\mathbf{x}}_{k+1|k}+\mathbf{K}_{k+1}(\mathbf{y}_{k+1}-\hat{\mathbf{y}}_{k+1|k}),\label{eqn: forward SOEKF x update}\\
&\bm{\Sigma}_{k+1}=\bm{\Sigma}_{k+1|k}-\bm{\Sigma}_{k+1|k}\mathbf{H}_{k+1}^{T}\mathbf{S}_{k+1}^{-1}\mathbf{H}_{k+1}\bm{\Sigma}_{k+1|k},\nonumber
\end{align}
\normalsize
where $\mathbf{K}_{k+1}$ is SOEKF's gain matrix. Unlike EKF, the SOEKF includes Hessian terms in the prediction equations.

\subsubsection{Inverse filter}
Define $\widetilde{f}(\hat{\mathbf{x}},\bm{\Sigma})\doteq f(\hat{\mathbf{x}})+\frac{1}{2}\sum_{i=1}^{n}\widetilde{\mathbf{a}}_{i}\textrm{Tr}\left(\nabla^{2}\left[f(\hat{\mathbf{x}})\right]_{i}\bm{\Sigma}\right)$ and $\widetilde{h}(\hat{\mathbf{x}},\bm{\Sigma})\doteq h(\hat{\mathbf{x}})+\frac{1}{2}\sum_{i=1}^{p}\mathbf{b}_{i}\textrm{Tr}\left(\nabla^{2}\left[h(\hat{\mathbf{x}})\right]_{i}\bm{\Sigma}\right)$. Using \eqref{eqn: observation y} and \eqref{eqn: forward SOEKF x prediction}-
\eqref{eqn: forward SOEKF x update}, the state transition equation for the inverse SOEKF (I-SOEKF) is
\par\noindent\small
\begin{align*}
\hat{\mathbf{x}}_{k+1}&=\overline{f}_{k}(\hat{\mathbf{x}}_{k},\mathbf{x}_{k+1},\mathbf{v}_{k+1})\\
&=\widetilde{f}(\hat{\mathbf{x}}_{k},\bm{\Sigma}_{k})-\mathbf{K}_{k+1}\widetilde{h}(\widetilde{f}(\hat{\mathbf{x}}_{k},\bm{\Sigma_{k}}),\bm{\Sigma}_{k+1|k})\\
&\;\;\;+\mathbf{K}_{k+1}h(\mathbf{x}_{k+1})+\mathbf{K}_{k+1}\mathbf{v}_{k+1}.
\end{align*}
\normalsize

The I-SOEKF recursions now follow directly from forward SOEKF's recursions treating $\mathbf{a}_{k}$ from \eqref{eqn: observation a} as the observation. Denote the Jacobians with respect to the state estimate $\hat{\mathbf{x}}_{k}$ as $\overline{\mathbf{F}}_{k}\doteq\nabla_{\mathbf{x}}\overline{f}_{k}(\mathbf{x},\mathbf{x}_{k+1},\mathbf{0})|_{\mathbf{x}=\doublehat{\mathbf{x}}_{k}}$ and $\mathbf{G}_{k+1}\doteq\nabla_{\mathbf{x}}g(\mathbf{x})|_{\mathbf{x}=\doublehat{\mathbf{x}}_{k+1|k}}$, while the Hessians are denoted as $\nabla^{2}\left[\overline{f}_{k}(\doublehat{\mathbf{x}}_{k})\right]_{i}\doteq\nabla_{x}^{2}\left[\overline{f}_{k}(\mathbf{x},\mathbf{x}_{k+1},\mathbf{0})\right]_{i}|_{\mathbf{x}=\doublehat{\mathbf{x}}_{k}}$ and $\nabla^{2}[g(\doublehat{x}_{k+1|k})]_{i}=\nabla^{2}_{\mathbf{x}}[g(\mathbf{x})]_{i}|_{\mathbf{x}=\doublehat{\mathbf{x}}_{k+1|k}}$. The process noise covariance matrix is $\overline{\mathbf{Q}}_{k}=\overline{\mathbf{V}}_{k}\mathbf{R}\overline{\mathbf{V}}_{k}^{T}$ with $\overline{\mathbf{V}}_{k}\doteq\nabla_{v}\overline{f}_{k}(\doublehat{\mathbf{x}}_{k},\mathbf{x}_{k+1},\mathbf{v})|_{\mathbf{v}=\mathbf{0}}$. The I-SOEKF's state and observation prediction is
\par\noindent\small
\begin{align*}
&\doublehat{\mathbf{x}}_{k+1|k}=\overline{f}_{k}(\doublehat{\mathbf{x}}_{k},\mathbf{x}_{k+1},\mathbf{0})+\frac{1}{2}\sum_{i=1}^{n}\widetilde{\mathbf{a}}_{i}\textrm{Tr}\left(\nabla^{2}\left[\overline{f}_{k}(\doublehat{\mathbf{x}}_{k})\right]_{i}\overline{\bm{\Sigma}}_{k}\right),\\
&\overline{\bm{\Sigma}}_{k+1|k}=\overline{\mathbf{F}}_{k}\overline{\bm{\Sigma}}_{k}\overline{\mathbf{F}}_{k}^{T}+\overline{\mathbf{Q}}_{k}\\
&\hspace{1.0cm}+\frac{1}{2}\sum_{i=1}^{n}\sum_{j=1}^{n}\widetilde{\mathbf{a}}_{i}\widetilde{\mathbf{a}}_{j}^{T}\textrm{Tr}\left(\nabla^{2}\left[\overline{f}_{k}(\doublehat{\mathbf{x}}_{k})\right]_{i}\overline{\bm{\Sigma}}_{k}\nabla^{2}\left[\overline{f}_{k}(\doublehat{\mathbf{x}}_{k})\right]_{j}\overline{\bm{\Sigma}}_{k}\right),\\
&\hat{\mathbf{a}}_{k+1|k}=g(\doublehat{\mathbf{x}}_{k+1|k})+\frac{1}{2}\sum_{i=1}^{n_{a}}\mathbf{d}_{i}\textrm{Tr}\left(\nabla^{2}\left[g(\doublehat{\mathbf{x}}_{k+1|k})\right]_{i}\overline{\bm{\Sigma}}_{k+1|k}\right),\\
&\overline{\mathbf{S}}_{k+1}=\mathbf{G}_{k+1}\overline{\bm{\Sigma}}_{k+1|k}\mathbf{G}_{k+1}^{T}+\bm{\Sigma}_{\epsilon}\\
&\hspace{-0.3cm}+\frac{1}{2}\sum_{i=1}^{n_{a}}\sum_{j=1}^{n_{a}}\mathbf{d}_{i}\mathbf{d}_{j}^{T}\textrm{Tr}\left(\nabla^{2}\left[g(\doublehat{\mathbf{x}}_{k+1|k})\right]_{i}\overline{\bm{\Sigma}}_{k+1|k}\nabla^{2}[g(\doublehat{\mathbf{x}}_{k+1|k})]_{j}\overline{\bm{\Sigma}}_{k+1|k}\right),
\end{align*}
\normalsize
where $\mathbf{d}_{i}$ denotes the $i$-th Euclidean basis vector in $\mathbb{R}^{n_{a}\times 1}$. After these prediction steps, the update steps to compute estimate $\doublehat{\mathbf{x}}_{k+1}$ and $\overline{\bm{\Sigma}}_{k+1}$ are same as \eqref{eqn:IEKF state update} and \eqref{eqn:IEKF covariance update}, respectively.

\begin{remark}
Here, besides the gain matrix $\mathbf{K}_{k+1}$, the Hessian terms are also treated as time-varying parameters of the state transition and approximated by evaluating them using the previous I-SOEKF's estimate $\doublehat{\mathbf{x}}_{k}$.
\end{remark}
\begin{remark}
When the second-order terms are neglected, the forward SOEKF and I-SOEKF reduce to, respectively, forward EKF and I-EKF. 
\end{remark}
\begin{remark}\label{remark:SOEKF complexity}
Similar to I-EKF\cite{singh2022inverse_part1}, I-SOEKF also follows from general SOEKF recursions and hence, has similar computational complexity, i.e., $\mathcal{O}(n^{5})$ where $n$ is the state dimension. Note that a derivative-free efficient implementation \cite{roth2011efficientSOEKF}  of SOEKF yields a $\mathcal{O}(n^{4})$ complexity.
\end{remark}

\subsubsection{One-step prediction formulation}\label{subsubsec: one step SOEKF}
The one-step prediction formulation of SOEKF for the considered system model can be derived following the derivation of two-step recursion formulation of SOEKF as outlined in \cite{bar2004estimation}. Denoting the Jacobians as $\mathbf{F}_{k}\doteq\nabla_{\mathbf{x}}f(\mathbf{x})\vert_{\mathbf{x}=\hat{\mathbf{x}}_{k}}$ and $\mathbf{H}_{k}\doteq\nabla_{\mathbf{x}}h(\mathbf{x})\vert_{\mathbf{x}=\hat{\mathbf{x}}_{k}}$, the one-step prediction SOEKF recursions are
\par\noindent\small
\begin{align}
&\widetilde{\mathbf{x}}_{k}=f(\hat{\mathbf{x}}_{k})+\frac{1}{2}\sum_{i=1}^{n}\widetilde{\mathbf{a}}_{i}\textrm{Tr}\left(\nabla^{2}\left[f(\hat{\mathbf{x}}_{k})\right]_{i}\bm{\Sigma}_{k}\right),\label{eqn: one step SOEKF first update}\\
&\widetilde{\bm{\Sigma}}_{k}=\mathbf{F}_{k}\bm{\Sigma}_{k}\mathbf{F}_{k}^{T}+\mathbf{Q}\nonumber\\
&\hspace{1.0cm}+\frac{1}{2}\sum_{i=1}^{n}\sum_{j=1}^{n}\widetilde{\mathbf{a}}_{i}\widetilde{\mathbf{a}}_{j}^{T}\textrm{Tr}\left(\nabla^{2}\left[f(\hat{\mathbf{x}}_{k})\right]_{i}\bm{\Sigma}_{k}\nabla^{2}\left[f(\hat{\mathbf{x}}_{k})\right]_{j}\bm{\Sigma}_{k}\right),\label{eqn: one step SOEKF sig tilde}\\
&\mathbf{S}_{k}=\mathbf{H}_{k}\bm{\Sigma}_{k}\mathbf{H}_{k}^{T}+\mathbf{R}\nonumber\\
&\hspace{1.0cm}+\frac{1}{2}\sum_{i=1}^{p}\sum_{j=1}^{p}\mathbf{b}_{i}\mathbf{b}_{j}^{T}\textrm{Tr}\left(\nabla^{2}\left[h(\hat{\mathbf{x}}_{k})\right]_{i}\bm{\Sigma}_{k}\nabla^{2}\left[h(\hat{\mathbf{x}}_{k})\right]_{j}\bm{\Sigma}_{k}\right),\\
&\mathbf{M}_{k}=\frac{1}{2}\sum_{i=1}^{n}\sum_{j=1}^{p}\widetilde{\mathbf{a}}_{i}\mathbf{b}_{j}^{T}\textrm{Tr}\left(\nabla^{2}\left[f(\hat{\mathbf{x}}_{k})\right]_{i}\bm{\Sigma}_{k}\nabla^{2}\left[h(\hat{\mathbf{x}}_{k})\right]_{j}\bm{\Sigma}_{k}\right),\\
&\mathbf{K}_{k}=(\mathbf{F}_{k}\bm{\Sigma}_{k}\mathbf{H}_{k}^{T}+\mathbf{M}_{k})\mathbf{S}_{k}^{-1},\label{eqn: one step SOEKF Kk}\\
&\hat{\mathbf{x}}_{k+1}=\widetilde{\mathbf{x}}_{k}+\mathbf{K}_{k}\left(\mathbf{y}_{k}-h(\hat{\mathbf{x}}_{k})-\frac{1}{2}\sum_{i=1}^{p}\mathbf{b}_{i}\textrm{Tr}\left(\nabla^{2}\left[h(\hat{\mathbf{x}}_{k})\right]_{i}\bm{\Sigma}_{k}\right)\right),\label{eqn: one step SOEKF final update}\\
&\bm{\Sigma}_{k+1}=\widetilde{\bm{\Sigma}}_{k}-\mathbf{K}_{k}\mathbf{S}_{k}\mathbf{K}_{k}^{T}.\label{eqn: one step SOEKF sig update}
\end{align}
\normalsize

From \eqref{eqn: observation y}, \eqref{eqn: one step SOEKF first update} and \eqref{eqn: one step SOEKF final update}, the state transition equation for one-step formulation of I-SOEKF is $\hat{\mathbf{x}}_{k+1}\doteq\overline{f}_{k}(\hat{\mathbf{x}}_{k},\mathbf{x}_{k},\mathbf{v}_{k})$ where
\par\noindent\small
\begin{align}
    &\overline{f}_{k}(\hat{\mathbf{x}}_{k},\mathbf{x}_{k},\mathbf{v}_{k})=f(\hat{\mathbf{x}}_{k})-\mathbf{K}_{k}h(\hat{\mathbf{x}}_{k})+\frac{1}{2}\sum_{i=1}^{n}\widetilde{\mathbf{a}}_{i}\textrm{Tr}\left(\nabla^{2}\left[f(\hat{\mathbf{x}}_{k})\right]_{i}\bm{\Sigma}_{k}\right)\nonumber\\
    &\hspace{0.5cm}-\frac{1}{2}\mathbf{K}_{k}\sum_{i=1}^{p}\mathbf{b}_{i}\textrm{Tr}\left(\nabla^{2}\left[h(\hat{\mathbf{x}}_{k})\right]_{i}\bm{\Sigma}_{k}\right)+\mathbf{K}_{k}h(\mathbf{x}_{k})+\mathbf{K}_{k}\mathbf{v}_{k}.\label{eqn: one step I-SOEKF state transition}
\end{align}
\normalsize
Considering the aforementioned state transition and observation \eqref{eqn: observation a}, the inverse filter follows, \textit{mutatis mutandis}, from SOEKF's one-step prediction formulation. The Jacobians with respect to the state estimate are $\overline{\mathbf{F}}_{k}\doteq\nabla_{\mathbf{x}}\overline{f}_{k}(\mathbf{x},\mathbf{x}_{k},\mathbf{0})\vert_{\mathbf{x}=\doublehat{\mathbf{x}}_{k}}=\mathbf{F}_{k}-\mathbf{K}_{k}\mathbf{H}_{k}$ and $\mathbf{G}_{k}\doteq\nabla_{\mathbf{x}}g(\mathbf{x})\vert_{\mathbf{x}=\doublehat{\mathbf{x}}_{k}}$, with the process noise covariance matrix $\overline{\mathbf{Q}}_{k}=\mathbf{K}_{k}\mathbf{R}_{k}\mathbf{K}_{k}^{T}$.

\subsection{Inverse GS-EKF}
\label{sec:gaussian sum ekf}
The GS-EKF \cite{anderson2012optimal} assumes the posterior distribution of the state estimate to be a weighted sum of Gaussian densities, which are updated recursively based on the current observation. This multiple-model filtering technique is also widely used to handle non-Gaussian process and measurement noises in the system model\cite{bilik2010mmse}. Hence, the I-GS-EKF developed in the following is also applicable to non-Gaussian inverse filtering problems.
\subsubsection{Forward filter}
In the forward GS-EKF, we consider $l$ Gaussians denoting the $i$-th Gaussian probability density function, with mean $\overline{\mathbf{x}}_{i,k}$ and covariance $\bm{\Sigma}_{i,k}$ at $k$-th time-step, as $\gamma(\mathbf{x}-\overline{\mathbf{x}}_{i},\bm{\Sigma}_{i})=(2\pi)^{-n/2}|\bm{\Sigma}_{i}|^{-1/2}e^{\left\{-\frac{1}{2}(\mathbf{x}-\overline{\mathbf{x}}_{i})^{T}\bm{\Sigma}_{i}^{-1}(\mathbf{x}-\overline{\mathbf{x}}_{i}) \right\}}$. Given observations $\mathbf{Y}^{k}=\lbrace\mathbf{y}_{j}\rbrace_{1\leq j\leq k}$ up to $k$-th time instant, the posterior distribution of state $\mathbf{x}_{k}$, is approximated as
$p(\mathbf{x}_{k}|\mathbf{Y}^{k})=\sum_{i=1}^{l}c_{i,k}\gamma(\mathbf{x}_{k}-\overline{\mathbf{x}}_{i,k},\bm{\Sigma}_{i,k})$, 
where $c_{i,k}$ is $i$-th Gaussian's weight at $k$-th time step. Linearizing the functions as $\mathbf{F}_{i,k}\doteq\nabla_{\mathbf{x}}f(\mathbf{x})|_{\mathbf{x}=\overline{\mathbf{x}}_{i,k}}$ and $\mathbf{H}_{i,k+1}\doteq\nabla_{\mathbf{x}}h(\mathbf{x})|_{\mathbf{x}=\overline{\mathbf{x}}_{i,k+1|k}}$ for the $i$-th Gaussian, the means $\lbrace\overline{\mathbf{x}}_{i,k}\rbrace_{1\leq i\leq l}$ and covariance matrices $\lbrace\bm{\Sigma}_{i,k}\rbrace_{1\leq i\leq l}$ are updated based on the current observation $\mathbf{y}_{k+1}$ using independent EKF recursions for each Gaussian. Finally, the weights $\lbrace c_{i,k}\rbrace_{1\leq i\leq l}$ are updated as\cite{anderson2012optimal}:
\par\noindent\small
\begin{align*}
c_{i,k+1}=\frac{c_{i,k}\gamma(\mathbf{y}_{k+1}-h(\overline{\mathbf{x}}_{i,k+1|k}),\mathbf{S}_{i,k+1})}{\sum_{i'=1}^{l}c_{i',k}\gamma(\mathbf{y}_{k+1}-h(\overline{\mathbf{x}}_{i',k+1|k}),\mathbf{S}_{i',k+1})},  
\end{align*}
\normalsize
where $\overline{\mathbf{x}}_{i,k+1|k}$ is the $i$-th Gaussian's predicted mean and $\mathbf{S}_{i,k+1}=\mathbf{H}_{i,k+1}\bm{\Sigma}_{i,k+1|k}\mathbf{H}_{i,k+1}^{T}+\mathbf{R}$ with $\bm{\Sigma}_{i,k+1|k}$ as the prediction covariance matrix of $\overline{\mathbf{x}}_{i,k+1|k}$. With these updated Gaussians, the point-estimate $\hat{\mathbf{x}}_{k}$ and the associated covariance matrix $\bm{\Sigma}_{k}$ are 
$\hat{\mathbf{x}}_{k+1}=\sum_{i=1}^{l}c_{i,k+1}\overline{\mathbf{x}}_{i,k+1}$ and 
$\bm{\Sigma}_{k+1}=\sum_{i=1}^{l}c_{i,k+1}\left(\bm{\Sigma}_{i,k+1}+(\hat{\mathbf{x}}_{k+1}-\overline{\mathbf{x}}_{i,k+1})(\hat{\mathbf{x}}_{k+1}-\overline{\mathbf{x}}_{i,k+1})^{T}\right)$. 

\subsubsection{Inverse filter}
Consider an augmented state vector $\mathbf{z}_{k}=\lbrace\lbrace\overline{\mathbf{x}}_{i,k}\rbrace_{1\leq i\leq l},\lbrace c_{i,k}\rbrace_{1\leq i\leq l}\rbrace$ (means and weights of forward GS-EKF). Then, substituting for observation $\mathbf{y}_{k+1}$ from \eqref{eqn: observation y} in the forward filter's updates similar to I-SOEKF and denoting the $i$-th EKF's gain matrix as $\mathbf{K}_{i,k+1}=\bm{\Sigma}_{i,k+1|k}\mathbf{H}_{i,k+1}^{T}\mathbf{S}_{i,k+1}^{-1}$ yields the state transition equations for inverse GS-EKF (I-GS-EKF) as
\par\noindent\small
\begin{align}
&\overline{\mathbf{x}}_{i,k+1}=f(\overline{\mathbf{x}}_{i,k})+\mathbf{K}_{i,k+1}\left(h(\mathbf{x}_{k+1})+\mathbf{v}_{k+1}-h(f(\overline{\mathbf{x}}_{i,k}))\right),\nonumber\\
&c_{i,k+1}=\frac{c_{i,k}\gamma(h(\mathbf{x}_{k+1})+\mathbf{v}_{k+1}-h(f(\overline{\mathbf{x}}_{i,k})),\mathbf{S}_{i,k+1})}{\sum_{i'=1}^{l}c_{i',k}\gamma(h(\mathbf{x}_{k+1})+\mathbf{v}_{k+1}-h(f(\overline{\mathbf{x}}_{i',k})),\mathbf{S}_{i',k+1})}.\label{eqn: inverse GS-EKF weight transition}
\end{align}
\normalsize
Treating $\lbrace\mathbf{K}_{i,k+1},\mathbf{S}_{i,k+1}\rbrace_{1\leq i\leq l}$ as time-varying parameters of the state transition equations, which are approximated in a similar way as we approximated the gain matrices for I-EKF and I-SOEKF, the overall state transition in terms of the augmented state is 
$\mathbf{z}_{k+1}=\overline{f}_{k}(\mathbf{z}_{k},\mathbf{x}_{k+1},\mathbf{v}_{k+1})$.
Similarly, the observation $\mathbf{a}_{k}$ as a function of augmented state $\mathbf{z}_{k}$ is 
$\mathbf{a}_{k}=\overline{g}(\mathbf{z}_{k})+\bm{\epsilon}_{k}=g\left(\sum_{i=1}^{l}c_{i,k}\overline{\mathbf{x}}_{i,k}\right)+\bm{\epsilon}_{k}$. 

The I-GS-EKF approximates the posterior distribution of the augmented state as a sum of $\overline{l}$ Gaussians with its recursions again following directly from forward GS-EKF's recursions treating $\mathbf{a}_{k}$ as the observation. However, the inverse filter estimates an $l(n+1)$-dimensional augmented state $\mathbf{z}_{k}$ with the Jacobians with respect to the state denoted as $\overline{\mathbf{F}}_{j,k}\doteq\nabla_{\mathbf{z}}\overline{f}_{k}(\mathbf{z},\mathbf{x}_{k+1},\mathbf{0})|_{\mathbf{z}=\overline{\mathbf{z}}_{j,k}}$ and $\mathbf{G}_{j,k+1}\doteq\nabla_{\mathbf{z}}\overline{g}(\mathbf{z})|_{\mathbf{z}=\overline{\mathbf{z}}_{j,k+1|k}}$, and the process noise covariance matrix as $\overline{\mathbf{Q}}_{k}=\overline{\mathbf{V}}_{j,k}\mathbf{R}\overline{\mathbf{V}}_{j,k}^{T}$ with $\overline{\mathbf{V}}_{j,k}\doteq\nabla_{\mathbf{v}}\overline{f}_{k}(\overline{\mathbf{z}}_{j,k},\mathbf{x}_{k+1},\mathbf{v})|_{\mathbf{v}=\mathbf{0}}$ for the $j$-th inverse filter's Gaussian updates. The point estimate $\hat{\mathbf{z}}_{k}$ consists of the estimates $\lbrace\hat{\overline{\mathbf{x}}}_{i,k},\hat{c}_{i,k}\rbrace_{1\leq i\leq l}$ of the forward GS-EKF's means and weights such that the point estimate $\doublehat{\mathbf{x}}_{k}$ of the forward filter's estimate $\hat{\mathbf{x}}_{k}$ is $\doublehat{\mathbf{x}}_{k}=\sum_{i=1}^{l}\hat{c}_{i,k}\hat{\overline{\mathbf{x}}}_{i,k}$.

\begin{remark}
When the forward filter considers only one Gaussian ($l=1$), the forward GS-EKF reduces to forward EKF with the only weight $c_{1,k}=1$ for all $k$. Hence, this weight need not be considered in the augmented state and $\mathbf{z}_{k}$ reduces to $\overline{\mathbf{x}}_{1,k}$ which is the estimate $\hat{\mathbf{x}}_{k}$ itself. Similarly, I-GS-EKF also reduces to I-EKF if only one Gaussian is considered ($\overline{l}=1$).
\end{remark}
\begin{remark}\label{remark: GSEKF complexity}
Since GS-EKF consists of several independent EKF, its computational load is larger than an EKF considering only one Gaussian. Furthermore, I-GS-EKF considers an augmented state of dimension $l(n+1)$, which is greater than that of the forward GS-EKF. Hence, in general, I-GS-EKF is computationally more complex than the forward GS-EKF and the I-EKF. However, our numerical experiments in Section~\ref{subsec:sim EKF SOEKF GS-EKF} suggest that I-GS-EKF can provide reasonable accuracy even when considering a smaller number of Gaussians than that in forward GS-EKF, i.e., when $\overline{l}<l$. A survey of several methods to reduce the computational complexity of these Kalman filter extensions is available in \cite{raitoharju2019computational}.
\end{remark}

\subsection{Inverse DEKF}
\label{sec:dithered EKF}
Consider the adversary employing DEKF \cite{weiss1980improved} as its forward filter. In DEKF, the output non-linearities are modified using dither signals so as to tighten the cone-bounds. 
Dithering tightens this cone such that the non-linearities are smoothened but it may also degrade the near-optimal performance of the EKF after the initial transient phase of estimation. 
Therefore, 
dithering is introduced only during the initial transient phase with the aim to improve the filter's transient performance and avoid divergence. Denote the dither amplitude which controls the tightness of the cone-bounds by $d$ and its amplitude probability density function by $p(a)$. The observation function $h(x)$ is dithered as 
$h^{*}(x)=\int_{-d}^{d}h(a+x)p(a)da$. 
If $d=d_{0}e^{-k/\tau}$, where $d_{0}$ and $\tau$ are constants and `$k$' denotes the time index, then $h^{*}(x)\to h(x)$ exponentially as $k\to\infty$ during the transient phase.

The forward DEKF follows from conventional forward EKF described in Section \ref{sec:background} (detailed in the companion paper (Part I)\cite[Section~III-C]{singh2022inverse_part1}) by replacing $h(\cdot)$ with $h^{*}(\cdot)$ as the observation function of $\mathbf{y}_{k}$ during the initial transient phase and hence, the inverse DEKF (I-DEKF) also follows from I-EKF restated in Section \ref{sec:background}. The dither of the adversary's filter is assumed to be known to us. 
Otherwise, 
the I-DEKF may also proceed with the unmodified observation function. We show in Section~\ref{subsec:sim dithered EKF} that these two formulations, labeled I-DEKF-1 and I-DEKF-2, respectively, generally vary in their estimation performances especially during the transient phase where the modified observation function is considered.

\begin{remark}\label{remark: DEKF complexity}
A DEKF differs from a standard EKF only because of the modified observation function and hence, has the same $\mathcal{O}(n^{3})$ computational complexity as EKF. The same argument holds for I-DEKF, which also follows from standard I-EKF recursions.
\end{remark}

\section{Stability Analyses}
\label{sec:stability}
Similar to I-EKF stability analysis in the companion paper (Part I) \cite{singh2022inverse_part1}, we first obtain stability conditions of forward SOEKF in the exponential-boundedness-mean-squared sense and then, extend those results to I-SOEKF's stability. In particular, we adopt the bounded non-linearity approach \cite{reif1999stochastic} and consider the one-step prediction formulation of forward and inverse SOEKF derived in Section~\ref{subsubsec: one step SOEKF}. As shown in \cite[Sec. III]{reif1999stochastic}, the convergence behaviors of both one- and two-step formulations are similar. In the following, we rely on the definitions of the exponential-boundedness and boundedness-with-probability-one of a stochastic process as well as a useful lemma from \cite{reif1999stochastic} that were restated in the companion paper (Part I) \cite[Definition 1 and 2, and Lemma 1]{singh2022inverse_part1}.

\subsection{Forward SOEKF stability}
\label{subsec: Stability of SOEKF}
Consider the one-step SOEKF's formulation \eqref{eqn: one step SOEKF first update}-\eqref{eqn: one step SOEKF sig update}. Considering second-order terms as well, the Taylor series expansion of functions $f(\cdot)$ and $h(\cdot)$ at the estimate $\hat{\mathbf{x}}_{k}$ are
 \par\noindent\small
\begin{align*}
&f(\mathbf{x}_{k})-f(\hat{\mathbf{x}}_{k})=\mathbf{F}_{k}(\mathbf{x}_{k}-\hat{\mathbf{x}}_{k})\\
&\;\;+\frac{1}{2}\sum_{i=1}^{n}\widetilde{\mathbf{a}}_{i}(\mathbf{x}_{k}-\hat{\mathbf{x}}_{k})^{T}\nabla^{2}\left[f(\hat{\mathbf{x}}_{k})\right]_{i}(\mathbf{x}_{k}-\hat{\mathbf{x}}_{k})+\phi(\mathbf{x}_{k},\hat{\mathbf{x}}_{k}),\\
&h(\mathbf{x}_{k})-h(\hat{\mathbf{x}}_{k})=\mathbf{H}_{k}(\mathbf{x}_{k}-\hat{\mathbf{x}}_{k})\\
&\;\;+\frac{1}{2}\sum_{i=1}^{p}\mathbf{b}_{i}(\mathbf{x}_{k}-\hat{\mathbf{x}}_{k})^{T}\nabla^{2}\left[h(\hat{\mathbf{x}}_{k})\right]_{i}(\mathbf{x}_{k}-\hat{\mathbf{x}}_{k})+\chi(\mathbf{x}_{k},\hat{\mathbf{x}}_{k}),
\end{align*}
\normalsize
where $\phi(\cdot)$ and $\chi(\cdot)$ are suitable non-linear functions to account for third and higher-order terms in the expansions. Using these expansions, the error dynamics of the forward filter with $\mathbf{e}_{k}\doteq\mathbf{x}_{k}-\hat{\mathbf{x}}_{k}$ is
\par\noindent\small
\begin{align}
\mathbf{e}_{k+1}=(\mathbf{F}_{k}-\mathbf{K}_{k}\mathbf{H}_{k})\mathbf{e}_{k}+\mathbf{r}_{k}+\mathbf{q}_{k}+\mathbf{s}_{k},\label{eqn: forward SOEKF error}
\end{align}
\normalsize
where
\par\noindent\small
\begin{align*}
\mathbf{r}_{k}&=\phi(\mathbf{x}_{k},\hat{\mathbf{x}}_{k})-\mathbf{K}_{k}\chi(\mathbf{x}_{k},\hat{\mathbf{x}}_{k}),\\
\mathbf{q}_{k}&=\frac{1}{2}\sum_{i=1}^{n}\widetilde{\mathbf{a}}_{i}\mathbf{e}_{k}^{T}\nabla^{2}\left[f(\hat{\mathbf{x}}_{k})\right]_{i}\mathbf{e}_{k}-\frac{1}{2}\sum_{i=1}^{n}\widetilde{\mathbf{a}}_{i}\textrm{Tr}\left(\nabla^{2}\left[f(\hat{\mathbf{x}}_{k})\right]_{i}\bm{\Sigma}_{k}\right)\\
&-\frac{1}{2}\mathbf{K}_{k}\sum_{i=1}^{p}\mathbf{b}_{i}\mathbf{e}_{k}^{T}\nabla^{2}\left[h(\hat{\mathbf{x}}_{k})\right]_{i}\mathbf{e}_{k}+\frac{1}{2}\mathbf{K}_{k}\sum_{i=1}^{p}\mathbf{b}_{i}\textrm{Tr}\left(\nabla^{2}\left[h(\hat{\mathbf{x}}_{k})\right]_{i}\bm{\Sigma}_{k}\right),\\
\mathbf{s}_{k}&=\mathbf{w}_{k}-\mathbf{K}_{k}\mathbf{v}_{k}.
\end{align*}
\normalsize

The following Theorem~\ref{theorem: forward SOEKF stability} provides sufficient conditions for the stochastic stability of forward SOEKF.
\begin{theorem}[Exponential boundedness of forward SOEKF's error]
\label{theorem: forward SOEKF stability}
Consider the non-linear stochastic system defined by \eqref{eqn: state transition x} and \eqref{eqn: observation y}, and SOEKF's one-step prediction formulation \eqref{eqn: one step SOEKF first update}-\eqref{eqn: one step SOEKF sig update}. Let the following assumptions hold true.\\
\textit{1)} There exist positive real numbers $\overline{f}$, $\overline{h}$, $\underline{\sigma}$, $\overline{\sigma}$, $\underline{q}$, $\underline{r}$, $\overline{a}$, $\overline{b}$, $\delta$ and real numbers $\underline{a}$, $\underline{b}$ (not necessarily positive) such that the following bounds are satisfied for all $k\geq 0$.
    \par\noindent\small
    \begin{align*}
    \underline{\sigma}\mathbf{I}&\preceq\bm{\Sigma}_{k}\preceq\overline{\sigma}\mathbf{I},\hspace{0.3cm}\|\mathbf{F}_{k}\|\leq\overline{f},\hspace{0.3cm}\|\mathbf{H}_{k}\|\leq\overline{h},\\
    \underline{r}\mathbf{I}&\preceq \mathbf{R}_{k}\preceq\delta\mathbf{I},\hspace{0.3cm}\underline{q}\mathbf{I}\preceq\mathbf{Q}_{k}\preceq\delta\mathbf{I},\\
    \underline{a}\mathbf{I}&\preceq\nabla^{2}\left[f(\hat{\mathbf{x}}_{k})\right]_{i}\preceq\overline{a}\mathbf{I}\hspace{0.2cm}\forall i\in\lbrace1,2,\hdots,n\rbrace,\\
    \underline{b}\mathbf{I}&\preceq\nabla^{2}\left[h(\hat{\mathbf{x}}_{k})\right]_{j}\preceq\overline{b}\mathbf{I}\hspace{0.2cm}\forall j\in\lbrace1,2,\hdots,p\rbrace.
    \end{align*}
    \normalsize
\textit{2)} $\mathbf{F}_{k}$ is non-singular and $\mathbf{F}_{k}^{-1}$ satisfies the following bound for all $k\geq 0$ for some positive real number $\widetilde{f}$,
    \par\noindent\small
    \begin{align*}
        \|\mathbf{F}_{k}^{-1}\|\leq\widetilde{f}.
    \end{align*}
    \normalsize
\textit{3)} There exist positive real numbers $\kappa_{\phi}$, $\epsilon_{\phi}$, $\kappa_{\chi}$, $\epsilon_{\chi}$ such that the non-linear functions $\phi(\cdot)$ and $\chi(\cdot)$ satisfy
    \par\noindent\small
    \begin{align*}
        &\|\phi(\mathbf{x},\hat{\mathbf{x}})\|_{2}\leq\kappa_{\phi}\|\mathbf{x}-\hat{\mathbf{x}}\|_{2}^{3}\hspace{0.5cm}for\hspace{0.5cm}\|\mathbf{x}-\hat{\mathbf{x}}\|_{2}\leq\epsilon_{\phi},\\
        &\|\chi(\mathbf{x},\hat{\mathbf{x}})\|_{2}\leq\kappa_{\chi}\|\mathbf{x}-\hat{\mathbf{x}}\|_{2}^{3}\hspace{0.5cm}for\hspace{0.5cm}\|\mathbf{x}-\hat{\mathbf{x}}\|_{2}\leq\epsilon_{\chi}.
    \end{align*}
    \normalsize
Then the estimation error given by \eqref{eqn: forward SOEKF error} is exponentially bounded in mean-squared sense if the estimation error is within $\epsilon$ bound for a suitable constant $\epsilon>0$, 
\par\noindent\small
\begin{align}
&\widetilde{f}<\frac{2\underline{r}}{\overline{h}\overline{a}\overline{b}\overline{\sigma}^{2}n\sqrt{np}},\label{eqn: SOEKF stable constraint on inverse norm}\\
&\underline{q}> c,\label{eqn: SOEKF stable constraint on q}
\end{align}
and
\begin{align}
&\delta=\frac{1}{\kappa_{\textrm{noise}}}\left(\frac{\alpha\widetilde{\epsilon}^{2}}{2\overline{\sigma}}-c_{\textrm{sec}}\right),\label{eqn: SOEKF stable constraint on delta}
\end{align}
\normalsize
for some $\widetilde{\epsilon}<\epsilon$, where $c$, $\alpha$, $\kappa_{\textrm{noise}}$ and $c_{\textrm{sec}}$ are constants that depend on the bounds assumed on the system.
\end{theorem}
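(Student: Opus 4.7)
The plan is to adopt the bounded-nonlinearity/Lyapunov approach of Reif et al.~and extend the I-EKF analysis of the companion paper to accommodate the extra Hessian terms carried by $\mathbf{q}_k$. The overall template is: propose a candidate Lyapunov function $V_k(\mathbf{e}_k) = \mathbf{e}_k^T \bm{\Sigma}_k^{-1} \mathbf{e}_k$, bound $\mathbb{E}[V_{k+1}\mid \mathbf{e}_k]$ above by $(1-\alpha) V_k(\mathbf{e}_k) + \beta$ for constants $\alpha \in (0,1)$ and $\beta > 0$ whenever $\|\mathbf{e}_k\|_2 \le \widetilde{\epsilon}$, and then invoke Lemma~1 of the companion paper to conclude exponential boundedness in mean-square sense.

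First, I would extract a Riccati-type recursion for $\bm{\Sigma}_{k+1}$ from \eqref{eqn: one step SOEKF sig tilde}--\eqref{eqn: one step SOEKF sig update}. Using assumption~1 (uniform boundedness of $\bm{\Sigma}_k$, $\mathbf{F}_k$, $\mathbf{H}_k$, and the Hessians) together with assumption~2 (invertibility of $\mathbf{F}_k$ with a uniform bound on $\|\mathbf{F}_k^{-1}\|$), I would mirror Reif's argument to obtain a uniform lower bound of the form $\bm{\Sigma}_{k+1}^{-1} \preceq \bm{\Sigma}_{k+1|k}^{-1} + \mathbf{H}_{k+1}^T \mathbf{R}^{-1} \mathbf{H}_{k+1}$ style inequality, then push it through the prediction step. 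The extra cross-term $\mathbf{M}_k$ and the Hessian trace terms in $\widetilde{\bm{\Sigma}}_k$ and $\mathbf{S}_k$ are the new ingredients; I would control them by the scalar bounds $\overline{a},\overline{b},\overline{\sigma}$ from assumption~1, which is where condition \eqref{eqn: SOEKF stable constraint on inverse norm} enters, ensuring the second-order inflation of $\mathbf{S}_k$ does not dominate and destroy the contractivity of $(\mathbf{F}_k - \mathbf{K}_k \mathbf{H}_k)$.

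Next, I would substitute \eqref{eqn: forward SOEKF error} into $V_{k+1}$, take the conditional expectation given $\mathbf{e}_k$, and split the resulting expression into four groups: (i) the nominal term $\mathbf{e}_k^T (\mathbf{F}_k - \mathbf{K}_k\mathbf{H}_k)^T \bm{\Sigma}_{k+1}^{-1} (\mathbf{F}_k - \mathbf{K}_k\mathbf{H}_k)\mathbf{e}_k$, which I would dominate by $(1-\alpha)V_k$ using the Riccati bound established above and the lower bound $\underline{q}$ from condition \eqref{eqn: SOEKF stable constraint on q}; (ii) the cubic remainder $\mathbf{r}_k$, handled exactly as in the I-EKF case by invoking assumption~3 together with $\|\mathbf{e}_k\|_2 \le \widetilde{\epsilon} \le \min(\epsilon_\phi,\epsilon_\chi)$, which yields a term of order $\widetilde{\epsilon}^2 V_k$ absorbable into $\alpha$; (iii) the second-order term $\mathbf{q}_k$, which after expectation simplifies because $\mathbb{E}[\mathbf{e}_k^T \nabla^2[f]_i\mathbf{e}_k] = \mathrm{Tr}(\nabla^2[f]_i \bm{\Sigma}_k)$ cancels the trace correction to leading order, leaving a quadratic-in-$\mathbf{e}_k$ residual bounded via $\overline{a}$, $\overline{b}$, $\overline{\sigma}$; and (iv) the noise contribution $\mathbb{E}[\mathbf{s}_k^T \bm{\Sigma}_{k+1}^{-1}\mathbf{s}_k] \le \kappa_{\text{noise}}\,\delta$, which together with a constant $c_{\text{sec}}$ absorbing the residual of (iii) produces the upper bound $\beta$. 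Matching $\beta$ against the required headroom $\alpha\widetilde{\epsilon}^2/(2\overline{\sigma})$ pins down condition \eqref{eqn: SOEKF stable constraint on delta}.

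The main obstacle I expect is handling $\mathbf{q}_k$: unlike the first-order EKF analysis, the cancellation between $\mathbf{e}_k^T \nabla^2[f]_i \mathbf{e}_k$ and $\mathrm{Tr}(\nabla^2[f]_i \bm{\Sigma}_k)$ is only in expectation, so the uncancelled fluctuation must be bounded pointwise in $\mathbf{e}_k$ using the Hessian bounds $\underline{a},\overline{a},\underline{b},\overline{b}$ and then re-absorbed into the Lyapunov decrement without spoiling positivity of $\alpha$. This is precisely why $\widetilde{f}$, $\underline{q}$ and $\delta$ must satisfy the three quantitative inequalities; the algebra will consist of tracking these constants explicitly. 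Once the Lyapunov inequality $\mathbb{E}[V_{k+1}\mid \mathbf{e}_k] \le (1-\alpha) V_k(\mathbf{e}_k) + \kappa_{\text{noise}}\delta + c_{\text{sec}}$ is established on $\{\|\mathbf{e}_k\|_2 \le \widetilde{\epsilon}\}$, the conclusion follows directly from Lemma~1 of the companion paper with $v_k = V_k$.
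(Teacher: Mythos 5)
Your proposal follows essentially the same route as the paper's proof: the Lyapunov function $V_{k}(\mathbf{e}_{k})=\mathbf{e}_{k}^{T}\bm{\Sigma}_{k}^{-1}\mathbf{e}_{k}$, the decomposition of $\mathbb{E}[V_{k+1}\vert\mathbf{e}_{k}]$ into the contractive nominal term (established through a matrix-inversion-lemma Riccati argument in which \eqref{eqn: SOEKF stable constraint on inverse norm} guarantees invertibility of $\mathbf{F}_{k}-\mathbf{K}_{k}\mathbf{H}_{k}$ and \eqref{eqn: SOEKF stable constraint on q} yields $0<\alpha<1$), the cubic remainder $\mathbf{r}_{k}$, the second-order term $\mathbf{q}_{k}$, and the noise term, followed by the drift lemma (Lemma~1 of Part I). The only point to correct is that the paper never invokes the in-expectation cancellation $\mathbb{E}[\mathbf{e}_{k}^{T}\nabla^{2}[f]_{i}\mathbf{e}_{k}]=\textrm{Tr}(\nabla^{2}[f]_{i}\bm{\Sigma}_{k})$ you first suggest --- no such averaging is available once you condition on $\mathbf{e}_{k}$ --- and instead bounds $\|\mathbf{q}_{k}\|_{2}\leq\kappa_{q}\|\mathbf{e}_{k}\|_{2}^{2}+c_{q}$ pointwise exactly as in your stated fallback, which is precisely why the non-vanishing constant $c_{\textrm{sec}}$ survives in the drift inequality and forces condition \eqref{eqn: SOEKF stable constraint on delta}.
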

\begin{proof}
See Appendix~\ref{App-thm-forward SOEKF stability}.
\end{proof}

\begin{remark}
It follows from the proof of Lemma~\ref{lemma: SOEKF stable alpha term} (see Appendix~\ref{App-thm-forward SOEKF stability}) that the constant $c$ depends on $\delta$. The conditions \eqref{eqn: SOEKF stable constraint on q} and \eqref{eqn: SOEKF stable constraint on delta} suggest that $\delta$ should be chosen appropriately so that both the conditions could be satisfied simultaneously. However, the exact bounds on $\delta$ depend on other bounds assumed on the system matrices in Theorem~\ref{theorem: forward SOEKF stability}.
\end{remark}
\begin{remark}
Furthermore, as with the bounded non-linearity approach for EKF, these bounds may be conservative and, consequently, the estimation error may remain bounded outside this range \cite[Sec. V]{reif1999stochastic}.
\end{remark}

\subsection{Inverse SOEKF stability}
\label{subsec: Stability of inverse SOEKF}
Considering a suitable non-linear function $\overline{\chi}(\cdot)$, the Taylor series expansion of $g(\cdot)$ at estimate $\doublehat{\mathbf{x}}_{k}$ of I-SOEKF's one-step prediction formulation is
\par\noindent\small
\begin{align*}
&g(\hat{\mathbf{x}}_{k})-g(\doublehat{\mathbf{x}}_{k})=\mathbf{G}_{k}(\hat{\mathbf{x}}_{k}-\doublehat{\mathbf{x}}_{k})\\
&\;\;\;+\frac{1}{2}\sum_{i=1}^{n_{a}}\mathbf{d}_{i}(\hat{\mathbf{x}}_{k}-\doublehat{\mathbf{x}}_{k})^{T}\nabla^{2}\left[g(\doublehat{\mathbf{x}}_{k})\right]_{i}(\hat{\mathbf{x}}_{k}-\doublehat{\mathbf{x}}_{k})+\overline{\chi}(\hat{\mathbf{x}}_{k},\doublehat{\mathbf{x}}_{k}),
\end{align*}
\normalsize
where $\mathbf{d}_{i}$ is the $i$-th Euclidean basis vector in $\mathbb{R}^{n_{a}\times 1}$. Finally, the error dynamics of the inverse filter with estimation error denoted by $\overline{\mathbf{e}}_{k}\doteq\hat{\mathbf{x}}_{k}-\doublehat{\mathbf{x}}_{k}$ and the inverse filter's Kalman gain and estimation error covariance matrix by $\overline{\mathbf{K}}_{k}$ and $\overline{\bm{\Sigma}}_{k}$, respectively, is
\par\noindent\small
\begin{align}
    \overline{\mathbf{e}}_{k+1}=(\overline{\mathbf{F}}_{k}-\overline{\mathbf{K}}_{k}\mathbf{G}_{k})\overline{\mathbf{e}}_{k}+\overline{\mathbf{r}}_{k}+\overline{\mathbf{q}}_{k}+\overline{\mathbf{s}}_{k},\label{eqn: inverse SOEKF error}
\end{align}
\normalsize
where
\par\noindent\small
\begin{align*}
&\overline{\mathbf{r}}_{k}=\overline{\phi}_{k}(\hat{\mathbf{x}}_{k},\doublehat{\mathbf{x}}_{k})-\overline{\mathbf{K}}_{k}\overline{\chi}(\hat{\mathbf{x}}_{k},\doublehat{\mathbf{x}}_{k}),\\
&\overline{\mathbf{q}}_{k}=\frac{1}{2}\sum_{i=1}^{n}\widetilde{\mathbf{a}}_{i}\overline{\mathbf{e}}_{k}^{T}\nabla^{2}\left[\overline{f}_{k}(\doublehat{\mathbf{x}}_{k})\right]_{i}\overline{\mathbf{e}}_{k}-\frac{1}{2}\sum_{i=1}^{n}\widetilde{\mathbf{a}}_{i}\textrm{Tr}\left(\nabla^{2}\left[\overline{f}_{k}(\doublehat{\mathbf{x}}_{k})\right]_{i}\overline{\bm{\Sigma}}_{k}\right)\\
&\hspace{-0.3cm}-\frac{1}{2}\overline{\mathbf{K}}_{k}\sum_{i=1}^{n_{a}}\mathbf{d}_{i}\overline{\mathbf{e}}_{k}^{T}\nabla^{2}\left[g(\doublehat{\mathbf{x}}_{k})\right]_{i}\overline{\mathbf{e}}_{k}+\frac{1}{2}\overline{\mathbf{K}}_{k}\sum_{i=1}^{n_{a}}\mathbf{d}_{i}\textrm{Tr}\left(\nabla^{2}\left[g(\doublehat{\mathbf{x}}_{k})\right]_{i}\overline{\bm{\Sigma}}_{k}\right),\\
&\overline{\mathbf{s}}_{k}=\mathbf{K}_{k}\mathbf{v}_{k}-\overline{\mathbf{K}}_{k}\bm{\epsilon}_{k},
\end{align*}
\normalsize
with $\overline{\phi}_{k}(\hat{\mathbf{x}}_{k},\doublehat{\mathbf{x}}_{k})=\phi(\hat{\mathbf{x}}_{k},\doublehat{\mathbf{x}}_{k})-\mathbf{K}_{k}\chi(\hat{\mathbf{x}}_{k},\doublehat{\mathbf{x}}_{k})$ and $\frac{1}{2}\sum_{i=1}^{n}\widetilde{\mathbf{a}}_{i}\overline{\mathbf{e}}_{k}^{T}\nabla^{2}\left[\overline{f}_{k}(\doublehat{\mathbf{x}}_{k})\right]_{i}\overline{\mathbf{e}}_{k}=\frac{1}{2}\sum_{i=1}^{n}\widetilde{\mathbf{a}}_{i}\overline{\mathbf{e}}_{k}^{T}\nabla^{2}\left[f(\doublehat{\mathbf{x}}_{k})\right]_{i}\overline{\mathbf{e}}_{k}-\frac{1}{2}\mathbf{K}_{k}\sum_{i=1}^{p}\mathbf{b}_{i}\overline{\mathbf{e}}_{k}^{T}\nabla^{2}\left[h(\doublehat{\mathbf{x}}_{k})\right]_{i}\overline{\mathbf{e}}_{k}$.
Here, the error in approximations of the terms $\frac{1}{2}\sum_{i=1}^{n}\widetilde{\mathbf{a}}_{i}\textrm{Tr}\left(\nabla^{2}\left[f(\hat{\mathbf{x}}_{k})\right]_{i}\bm{\Sigma}_{k}\right)$ and $\frac{1}{2}\mathbf{K}_{k}\sum_{i=1}^{p}\mathbf{b}_{i}\textrm{Tr}\left(\nabla^{2}\left[h(\hat{\mathbf{x}}_{k})\right]_{i}\bm{\Sigma}_{k}\right)$ by I-SOEKF (as mentioned in Section \ref{sec:higher order EKF}) are neglected. Also, using the bounds assumed in Theorem \ref{theorem: forward SOEKF stability}, these approximation errors are bounded by positive constants.

The following Theorem~\ref{theorem: inverse SOEKF stability} states the conditions for stability of I-SOEKF.

\begin{theorem}[Exponential boundedness of I-SOEKF's error]
\label{theorem: inverse SOEKF stability}
Consider the adversary's forward SOEKF's one-step prediction formulation that is stable as per Theorem \ref{theorem: forward SOEKF stability}. Additionally, assume that the following hold true.\\
\textit{1)} There exist positive real numbers $\overline{g}$, $\underline{m}$, $\overline{m}$, $\underline{\epsilon}$, $\overline{c}$, $\overline{\delta}$ and a real number $\underline{c}$ (not necessarily positive) such that the following bounds are fulfilled for all $k\geq 0$.
    \par\noindent\small
    \begin{align*}
        &\|\mathbf{G}_{k}\|\leq\overline{g},\hspace{0.3cm}
        \underline{m}\mathbf{I}\preceq\overline{\bm{\Sigma}}_{k}\preceq\overline{m}\mathbf{I},\hspace{0.3cm}\underline{\epsilon}\mathbf{I}\preceq\overline{\mathbf{R}}_{k}\preceq\overline{\delta}\mathbf{I},\\
        &\underline{c}\mathbf{I}\preceq\nabla^{2}\left[g(\doublehat{\mathbf{x}}_{k})\right]_{i}\preceq\overline{c}\mathbf{I}\hspace{0.5cm}\forall i\in\lbrace 1,2,\hdots,n_{a}\rbrace.
    \end{align*}
    \normalsize
\textit{2)} $\mathbf{H}_{k}\bm{\Sigma}_{k}\mathbf{F}_{k}^{T}+\mathbf{M}_{k}^{T}$ is full column rank matrix for every $k\geq 0$.
\textit{3)} There exist positive real numbers $\kappa_{\bar{\chi}}$, $\epsilon_{\bar{\chi}}$ such that the non-linear function $\overline{\chi}(\cdot)$ satisfies
    \par\noindent\small
    \begin{align*}
        \|\overline{\chi}(\hat{\mathbf{x}},\doublehat{\mathbf{x}})\|_{2}\leq\kappa_{\bar{\chi}}\|\hat{\mathbf{x}}-\doublehat{\mathbf{x}}\|_{2}^{3}\hspace{0.5cm}for\hspace{0.5cm}\|\hat{\mathbf{x}}-\doublehat{\mathbf{x}}\|_{2}\leq\epsilon_{\bar{\chi}}.
    \end{align*}
    \normalsize
Then, the estimation error of I-SOEKF given by \eqref{eqn: inverse SOEKF error} is exponentially bounded in mean-squared sense if the estimation error is bounded by a suitable constant $\overline{\epsilon}>0$ and the bound constants also satisfy the equivalent conditions of \eqref{eqn: SOEKF stable constraint on inverse norm}, \eqref{eqn: SOEKF stable constraint on q}, and \eqref{eqn: SOEKF stable constraint on delta} for the inverse filter dynamics.
\end{theorem}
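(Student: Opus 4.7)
The plan is to mirror the proof of Theorem~\ref{theorem: forward SOEKF stability} applied to the inverse filter's error recursion \eqref{eqn: inverse SOEKF error}, exploiting the structural parallel between forward and inverse one-step prediction filters. In particular, $(\overline{\mathbf{F}}_{k}, \mathbf{G}_{k}, \overline{\mathbf{K}}_{k}, \overline{\bm{\Sigma}}_{k})$ play the roles of $(\mathbf{F}_{k}, \mathbf{H}_{k}, \mathbf{K}_{k}, \bm{\Sigma}_{k})$, while $\overline{\mathbf{q}}_{k}$, $\overline{\mathbf{r}}_{k}$ and $\overline{\mathbf{s}}_{k}$ are, respectively, the second-order approximation residual, the higher-order nonlinear residual, and the effective noise for the inverse filter. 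Since Theorem~\ref{theorem: forward SOEKF stability} already supplies uniform bounds on $\mathbf{K}_{k}$, $\bm{\Sigma}_{k}$, $\mathbf{F}_{k}$, $\mathbf{H}_{k}$ and the Hessians of $f$ and $h$, these quantities enter the inverse analysis as known bounded constants.

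First, I would verify each hypothesis that the forward-filter proof required is in place for the inverse dynamics. The norm of $\overline{\mathbf{F}}_{k}=\mathbf{F}_{k}-\mathbf{K}_{k}\mathbf{H}_{k}$ is bounded by the triangle inequality using Theorem~\ref{theorem: forward SOEKF stability}; invertibility of $\overline{\mathbf{F}}_{k}$ with a uniform bound $\|\overline{\mathbf{F}}_{k}^{-1}\|\leq\widetilde{\overline{f}}$ is exactly what Assumption~2 buys, since substituting $\mathbf{K}_{k}=(\mathbf{F}_{k}\bm{\Sigma}_{k}\mathbf{H}_{k}^{T}+\mathbf{M}_{k})\mathbf{S}_{k}^{-1}$ and using the covariance and noise bounds yields a uniform lower bound on the smallest singular value of $\overline{\mathbf{F}}_{k}$ whenever $\mathbf{H}_{k}\bm{\Sigma}_{k}\mathbf{F}_{k}^{T}+\mathbf{M}_{k}^{T}$ has full column rank. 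The Hessian $\nabla^{2}[\overline{f}_{k}(\doublehat{\mathbf{x}}_{k})]_{i}$ decomposes into Hessians of $f$ and $h$ pre-multiplied by $\mathbf{K}_{k}$, so the existing Hessian bounds and the forward Kalman-gain bound of Theorem~\ref{theorem: forward SOEKF stability} deliver the corresponding uniform Hessian bound; the analogous bound on $\nabla^{2}[g(\doublehat{\mathbf{x}}_{k})]_{i}$ is assumed directly.

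Next, I would reproduce the Lyapunov argument of Appendix~\ref{App-thm-forward SOEKF stability} with Lyapunov function $\overline{V}_{k}(\overline{\mathbf{e}}_{k})\doteq\overline{\mathbf{e}}_{k}^{T}\overline{\bm{\Sigma}}_{k}^{-1}\overline{\mathbf{e}}_{k}$, substituting \eqref{eqn: inverse SOEKF error} into $\overline{V}_{k+1}$ and taking conditional expectation given $\overline{\mathbf{e}}_{k}$. The cross terms between $(\overline{\mathbf{F}}_{k}-\overline{\mathbf{K}}_{k}\mathbf{G}_{k})\overline{\mathbf{e}}_{k}$ and $\overline{\mathbf{s}}_{k}=\mathbf{K}_{k}\mathbf{v}_{k}-\overline{\mathbf{K}}_{k}\bm{\epsilon}_{k}$ vanish because $\mathbf{v}_{k}$ and $\bm{\epsilon}_{k}$ are zero mean and independent of $\overline{\mathbf{e}}_{k}$. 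The cubic residual $\overline{\mathbf{r}}_{k}=\overline{\phi}_{k}-\overline{\mathbf{K}}_{k}\overline{\chi}$ is controlled on $\{\|\overline{\mathbf{e}}_{k}\|_{2}\leq\overline{\epsilon}\}$ using Assumption~3 on $\overline{\chi}$ together with the bounds on $\phi,\chi$ already provided in Theorem~\ref{theorem: forward SOEKF stability}. The second-order piece $\overline{\mathbf{q}}_{k}$ produces, via the Hessian and covariance bounds, a quadratic-in-$\overline{\mathbf{e}}_{k}$ contribution absorbed into a negative-definite term and a constant contribution absorbed into a $\overline{c}_{\mathrm{sec}}$ term. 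Assembling these bounds under the counterparts of \eqref{eqn: SOEKF stable constraint on inverse norm}--\eqref{eqn: SOEKF stable constraint on delta} for the inverse dynamics yields an inequality of the form $\mathbb{E}[\overline{V}_{k+1}\mid\overline{\mathbf{e}}_{k}]-\overline{V}_{k}(\overline{\mathbf{e}}_{k})\leq -\overline{\alpha}\|\overline{\mathbf{e}}_{k}\|_{2}^{2}+\overline{\kappa}_{\mathrm{noise}}\,\overline{\delta}+\overline{c}_{\mathrm{sec}}$, after which Lemma~1 of the companion paper delivers exponential boundedness in the mean-squared sense.

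The main obstacle is bookkeeping. The inverse filter's process-noise covariance $\overline{\mathbf{Q}}_{k}=\mathbf{K}_{k}\mathbf{R}\mathbf{K}_{k}^{T}$ and the effective noise $\overline{\mathbf{s}}_{k}$ couple forward and inverse gains, so every constant appearing in the inverse analogues of $c$, $\alpha$, $\kappa_{\mathrm{noise}}$, $c_{\mathrm{sec}}$ must be carefully re-expressed in terms of both the forward-filter bounds $(\overline{f},\overline{h},\underline{\sigma},\overline{\sigma},\delta,\overline{a},\overline{b})$ from Theorem~\ref{theorem: forward SOEKF stability} and the inverse-filter bounds $(\overline{g},\underline{m},\overline{m},\overline{\delta},\overline{c})$ introduced in Assumption~1. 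In addition, the approximation errors I-SOEKF incurs by evaluating the Hessian-trace terms of $\widetilde{f}$ and $\widetilde{h}$ at $\doublehat{\mathbf{x}}_{k}$ rather than at $\hat{\mathbf{x}}_{k}$ (which are neglected in writing $\overline{\mathbf{q}}_{k}$) must be absorbed into the constant $\overline{c}_{\mathrm{sec}}$ using the Hessian and covariance bounds; this is routine given the uniform bounds already in place but is where the argument is most prone to slips.
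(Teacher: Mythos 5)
Your overall strategy is exactly the paper's: verify that the hypotheses of Theorem~\ref{theorem: forward SOEKF stability} carry over to the inverse dynamics under the role substitution $(\overline{\mathbf{F}}_{k},\mathbf{G}_{k},\overline{\mathbf{K}}_{k},\overline{\bm{\Sigma}}_{k})\leftrightarrow(\mathbf{F}_{k},\mathbf{H}_{k},\mathbf{K}_{k},\bm{\Sigma}_{k})$, with the Hessian bound on $\nabla^{2}[\overline{f}_{k}(\doublehat{\mathbf{x}}_{k})]_{i}$ (via the decomposition into Hessians of $f$ and $h$ weighted by $\mathbf{K}_{k}$) being the only genuinely new estimate, and the Lyapunov argument then re-run verbatim. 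The paper's Appendix~\ref{App-thm-inverse SOEKF stability} does precisely this and defers the rest to the forward proof and the companion paper.

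There is, however, one concrete misstep. You claim Assumption~2 (full column rank of $\mathbf{H}_{k}\bm{\Sigma}_{k}\mathbf{F}_{k}^{T}+\mathbf{M}_{k}^{T}$) delivers a uniform lower bound on the smallest singular value of $\overline{\mathbf{F}}_{k}=\mathbf{F}_{k}-\mathbf{K}_{k}\mathbf{H}_{k}$. That implication does not hold and is not what the assumption is for. Since $\mathbf{H}_{k}\bm{\Sigma}_{k}\mathbf{F}_{k}^{T}+\mathbf{M}_{k}^{T}=(\mathbf{K}_{k}\mathbf{S}_{k})^{T}$ and $\mathbf{S}_{k}$ is invertible, full column rank of this $p\times n$ matrix is equivalent to $\mathrm{rank}(\mathbf{K}_{k})=n$, which is what guarantees that the inverse filter's process noise covariance $\overline{\mathbf{Q}}_{k}=\mathbf{K}_{k}\mathbf{R}\mathbf{K}_{k}^{T}$ satisfies $\underline{q}\,\mathbf{I}\preceq\overline{\mathbf{Q}}_{k}$ for some $\underline{q}>0$ --- a hypothesis of Theorem~\ref{theorem: forward SOEKF stability} that would otherwise fail for the inverse dynamics, since $\mathbf{K}_{k}\mathbf{R}\mathbf{K}_{k}^{T}$ can be singular. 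Your write-up never secures this lower bound, and it is essential for the constant $\alpha$ in the analogue of Lemma~\ref{lemma: SOEKF stable alpha term} to satisfy $0<\alpha<1$. The uniform bound on $\|\overline{\mathbf{F}}_{k}^{-1}\|$ that you try to extract from Assumption~2 is instead part of what the theorem stipulates directly through the ``equivalent condition of \eqref{eqn: SOEKF stable constraint on inverse norm} for the inverse filter dynamics.'' With that reassignment of roles, the rest of your argument goes through as in the paper.
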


\begin{proof}
See Appendix~\ref{App-thm-inverse SOEKF stability}.
\end{proof}

\begin{remark}\label{remark:SOEKF observability}
As discussed in the companion paper (Part I)\cite[Sec.~V-C]{singh2022inverse_part1}, the inequality $\underline{m}\mathbf{I}\preceq\overline{\bm{\Sigma}}_{k}\preceq\overline{m}\mathbf{I}$ assumed in Theorem~\ref{theorem: inverse SOEKF stability} is closely related to the theoretical observability of the non-linear inverse filtering model. Further, the inequality $\underline{m}\mathbf{I}\preceq\overline{\bm{\Sigma}}_{k}\preceq\overline{m}\mathbf{I}$ is a weaker assumption than observability.
\end{remark}
\begin{remark}\label{remark:consistency SOEKF GS-EKF}
Furthermore, the I-EKF's consistency results of \cite[Theorem~6]{singh2022inverse_part1} can be trivially extended to I-SOEKF's and I-GS-EKF's state estimates. Note that a GS-EKF's state estimate is a weighted sum of a finite number of independent EKF recursions, which are consistent as shown in \cite[Theorem~6]{singh2022inverse_part1}.
\end{remark}

\section{Unknown Forward Filter and Model}
\label{sec:RKHS-EKF}
In practice, the defender's state transition function $f(\cdot)$ of \eqref{eqn: state transition x} may not be known to the adversary. Similarly, the defender may not know the specific forward filter employed by the adversary and the adversary's function $g(\cdot)$ of \eqref{eqn: observation a} for computing its actions. In this context, we propose a recursive RKHS-EKF to jointly compute the desired state estimates and learn the unknown functions. In particular, we approximate an unknown function $s(\cdot):\mathbb{R}^{n}\to\mathbb{R}$ by a function in RKHS induced by a kernel $\kappa(\cdot,\cdot):\mathbb{R}^{n}\times\mathbb{R}^{n}\to\mathbb{R}$\cite{aronszajn1950theory}. The RKHS-based function approximation has been proposed for non-linear state-space modeling\cite{tobar2015unsupervised,ralaivola2003dynamical} and recursive least-squares algorithms with non-linear unknown functions\cite{engel2004kernel,weifeng2009extended_krls,van2006sliding}. The representer theorem \cite{scholkopf2001generalized} ensures that the optimal approximation in RKHS with respect to an arbitrary loss function takes the form $s(\cdot)\approx\sum_{i=1}^{M}a_{i}\kappa(\widetilde{\mathbf{x}}_{i},\cdot)$, where $\{\widetilde{\mathbf{x}}_{i}\}_{1\leq i\leq M}$ are the $M$ input training samples or dictionary and $\{a_{i}\}_{1\leq i\leq M}$ are the corresponding mixing parameters to be learnt. Often, a Gaussian kernel such as $\kappa(\mathbf{x}_{i},\mathbf{x}_{j})=\exp{\left(-\frac{\|\mathbf{x}_{i}-\mathbf{x}_{j}\|^{2}_{2}}{\sigma^2}\right)}$, with kernel width $\sigma>0$ controlling the smoothness of the approximation, is commonly used. It is a universal kernel \cite{steinwart2001influence}, i.e., its induced RKHS is dense in the space of continuous functions. 

In the following, we consider a general non-linear system model, wherein both state transition and observation functions are unknown to the agent employing the filter, and develop a general RKHS-EKF. The defender can employ the RKHS-EKF as its inverse filter without assuming any prior information about the adversary's forward filter. Our RKHS-EKF may be trivially simplified to yield forward RKHS-EKF for the adversary which knows its observation function. In particular, our RKHS-EKF adopts EKF to obtain the state estimates while the unknown system parameters are learnt using EM algorithm\cite{hajek2015random}. The EM algorithm is widely used to compute maximum likelihood estimates in presence of missing data.
 
\textit{System models for uncertain dynamics:} We examine the non-linear state transition \eqref{eqn: state transition x} and observation \eqref{eqn: observation y} with the functions $f(\cdot)$ and $h(\cdot)$ as well as the noise covariances $\mathbf{Q}$ and $\mathbf{R}$ being unknown. Consider a kernel function $\kappa(\cdot,\cdot)$ and a dictionary $\{\widetilde{\mathbf{x}}_{l}\}_{1\leq l\leq L}$ of size $L$. Denote $\bm{\Phi}(\mathbf{x})=\begin{bmatrix}\kappa(\widetilde{\mathbf{x}}_{1},\mathbf{x})&\hdots&\kappa(\widetilde{\mathbf{x}}_{L},\mathbf{x})\end{bmatrix}^{T}$. Using the kernel function approximation, the filter's state transition and observation are approximated, respectively, as
\par\noindent\small
\begin{align}
    &\mathbf{x}_{k+1}=\mathbf{A}\bm{\Phi}(\mathbf{x}_{k})+\mathbf{w}_{k},\label{eqn:RKHS-EKF state transition approx}\\
    &\mathbf{y}_{k}=\mathbf{B}\bm{\Phi}(\mathbf{x}_{k})+\mathbf{v}_{k},\label{eqn:RKHS-EKF observation approx}
\end{align}
\normalsize
where the coefficient matrices $\mathbf{A}\in\mathbb{R}^{n\times L}$ and $\mathbf{B}\in\mathbb{R}^{p\times L}$ consists of the unknown mixing parameters to be learnt. The dictionary $\{\widetilde{\mathbf{x}}_{l}\}_{1\leq l\leq L}$ can be formed using a sliding window\cite{van2006sliding} or approximate linear dependency (ALD)\cite{engel2004kernel} criterion. Besides the state estimate $\hat{\mathbf{x}}_{k}$, the RKHS-EKF needs to estimate the unknown parameters $\Theta=\{\mathbf{A},\mathbf{B},\mathbf{Q},\mathbf{R}\}$ based on the observations $\{\mathbf{y}_{i}\}_{1\leq i\leq k}$ upto $k$-th time step.

In Section~\ref{subsec: parameter learning}, we derive an approximate online EM to obtain the parameter estimates, which is then coupled with EKF to formulate the RKHS-EKF recursions in Section~\ref{subsec: recursion}. Finally, the RKHS-EKF to jointly estimate the state and system parameters is summarized in Algorithms~\ref{alg:RKHS-EKF initialization} and \ref{alg:RKHS-EKF recursion}.

\subsection{
Parameter learning}\label{subsec: parameter learning}
Given $\Theta$, the state estimate $\hat{\mathbf{x}}_{k}$ can be computed using EKF-based recursions. To estimate the unknown parameters $\Theta$, we consider EM algorithm\cite{hajek2015random}.

\textit{EM for parameter learning:} Consider the states upto time $k$ as $\mathbf{X}^{k}=\{\mathbf{x}_{j}\}_{0\leq j\leq k}$ and the corresponding observations $\mathbf{Y}^{k}=\{\mathbf{y}_{j}\}_{1\leq j\leq k}$. The joint conditional probability density given the parameters $\Theta$ is
\par\noindent\small
\begin{align}
    p(\mathbf{X}^{k},\mathbf{Y}^{k}|\Theta)=p(\mathbf{x}_{0})\prod_{j=1}^{k}p(\mathbf{x}_{j}|\mathbf{x}_{j-1},\Theta)\prod_{j=1}^{k}p(\mathbf{y}_{j}|\mathbf{x}_{j},\Theta).\label{eqn:RKHS-EKF joint density}
\end{align}
\normalsize

Under the Gaussian noise assumption, the conditional probability densities are
\par\noindent\small
\begin{align}
    &p(\mathbf{x}_{j}|\mathbf{x}_{j-1},\Theta)=\gamma(\mathbf{x}_{j}-\mathbf{A}\bm{\Phi}(\mathbf{x}_{j-1}),\mathbf{Q}),\label{eqn:RKHS-EKF conditional density x}\\
    &p(\mathbf{y}_{j}|\mathbf{x}_{j},\Theta)=\gamma(\mathbf{y}_{j}-\mathbf{B}\bm{\Phi}(\mathbf{x}_{j}),\mathbf{R}),\label{eqn:RKHS-EKF conditional density y}
\end{align}
\normalsize
with the Gaussian probability density function $\gamma(\cdot,\cdot)$ is as defined in Section~\ref{sec:gaussian sum ekf}. Assume $\mathbf{x}_{0}\sim\mathcal{N}(\hat{\mathbf{x}}_{0},\bm{\Sigma}_{0})$. Note that this assumption is also needed to initialize the EKF recursions. Using this in \eqref{eqn:RKHS-EKF joint density} along with \eqref{eqn:RKHS-EKF conditional density x} and \eqref{eqn:RKHS-EKF conditional density y}, we have
\par\noindent\small
\begin{align}
    &\log p(\mathbf{X}^{k},\mathbf{Y}^{k}|\Theta)=-\frac{1}{2}\log |\bm{\Sigma}_{0}|-\frac{1}{2}(\mathbf{x}_{0}-\hat{\mathbf{x}}_{0})^{T}\bm{\Sigma}_{0}^{-1}(\mathbf{x}_{0}-\hat{\mathbf{x}}_{0})\nonumber\\
    &\hspace{-0.3cm}+\sum_{j=1}^{k}\left(-\frac{1}{2}\log |\mathbf{Q}|-\frac{1}{2}(\mathbf{x}_{j}-\mathbf{A}\bm{\Phi}(\mathbf{x}_{j-1}))^{T}\mathbf{Q}^{-1}(\mathbf{x}_{j}-\mathbf{A}\bm{\Phi}(\mathbf{x}_{j-1}))\right)\nonumber\\
    &\hspace{-0.3cm}+\sum_{j=1}^{k}\left(-\frac{1}{2}\log |\mathbf{R}|-\frac{1}{2}(\mathbf{y}_{j}-\mathbf{B}\bm{\Phi}(\mathbf{x}_{j}))^{T}\mathbf{R}^{-1}(\mathbf{y}_{j}-\mathbf{B}\bm{\Phi}(\mathbf{x}_{j}))\right)+\alpha,\label{eqn:RKHS-EKF log all terms}
\end{align}
\normalsize
where $\alpha$ denotes the constant terms which do not affect the maximization.

A basic EM algorithm to estimate $\Theta$ based on the observations $\mathbf{Y}^{k}$ consists of following two steps that are iterated a fixed number of times or until convergence: \textit{a) E-step:} Given the estimate $\hat{\Theta}$ of the unknown parameters $\Theta$, we compute $Q(\Theta,\hat{\Theta})=\mathbb{E}[\log p(\mathbf{X}^{k},\mathbf{Y}^{k}|\Theta)|\mathbf{Y}^{k},\hat{\Theta}]$, and \textit{b) M-step:} The updated estimate $\hat{\Theta}^{(new)}=\textrm{arg max}_{\Theta} Q(\Theta,\hat{\Theta})$. Note that for online EM, at $k$-th time step, the current estimate $\hat{\Theta}$ is $\hat{\Theta}_{k-1}$ and the updated estimate $\hat{\Theta}^{(\textrm{new})}$ is $\hat{\Theta}_{k}$.

\textit{Approximate online EM for parameter estimates:} In our RKHS-EKF, we adopt an approximate online version for the EM algorithm such that all the available observations need not be processed at each time step. Further, as we will describe in the E-step, the required conditional expectations can be approximated using the estimates from EKF recursions itself. Hence, we first consider the M-step. Consider the $k$-th time step such that $k$ observations are available to us upto this instant.

\textit{Parameter estimates (M-step):} For simplicity, denote the conditional expectation operator $\mathbb{E}[\cdot|\mathbf{Y}^{k},\hat{\Theta}_{k-1}]$ given $k$ observations by $\mathbb{E}_{k}[\cdot]$. Now, from \eqref{eqn:RKHS-EKF log all terms}, maximizing $\mathbb{E}_{k}[\log p(\mathbf{X}^{k},\mathbf{Y}^{k}|\Theta)]$ with respect to parameter $\mathbf{A}$, we obtain the estimate $\hat{\mathbf{A}}_{k}$ of $\mathbf{A}$ given $k$ observations as
\par\noindent\small
\begin{align}
    \hat{\mathbf{A}}_{k}=\left(\sum_{j=1}^{k}\mathbb{E}_{k}[\mathbf{x}_{j}\bm{\Phi}(\mathbf{x}_{j-1})^{T}]\right)\left(\sum_{j=1}^{k}\mathbb{E}_{k}[\bm{\Phi}(\mathbf{x}_{j-1})\bm{\Phi}(\mathbf{x}_{j-1})^{T}]\right)^{-1}.\label{eqn:RKHS-EKF A estimate}
\end{align}
\normalsize

Note that these expectations are computed using (and hence, functions of) the observations $\mathbf{Y}^{k}$ and current parameter estimate $\hat{\Theta}_{k-1}$ as described later in the E-step. However, this computation requires all $k$ observations to be processed together at time $k$, and the complexity increases as $k$ increases. To obtain approximate online estimate at low computations, we define the sum $\mathbf{S}^{x\phi}_{k}=\sum_{j=1}^{k}\mathbb{E}_{k}[\mathbf{x}_{j}\bm{\Phi}(\mathbf{x}_{j-1})^{T}]$ and $\mathbf{S}^{\phi 1}_{k}=\sum_{j=1}^{k}\mathbb{E}_{k}[\bm{\Phi}(\mathbf{x}_{j-1})\bm{\Phi}(\mathbf{x}_{j-1})^{T}]$ such that $\hat{\mathbf{A}}_{k}=\mathbf{S}^{x\phi}_{k}(\mathbf{S}^{\phi 1}_{k})^{-1}$ from \eqref{eqn:RKHS-EKF A estimate}. We approximate these sums as
\par\noindent\small
\begin{align}
    &\mathbf{S}^{x\phi}_{k}\approx\mathbf{S}^{x\phi}_{k-1}+\mathbb{E}_{k}[\mathbf{x}_{k}\bm{\Phi}(\mathbf{x}_{k-1})^{T}],\label{eqn:RKHS-EKF sum x phi}\\
    &\mathbf{S}^{\phi 1}_{k}\approx\mathbf{S}^{\phi 1}_{k-1}+\mathbb{E}_{k}[\bm{\Phi}(\mathbf{x}_{k-1})\bm{\Phi}(\mathbf{x}_{k-1})^{T}].\label{eqn:RKHS-EKF sum phi 1}
\end{align}
\normalsize
\begin{remark}
This is an approximation because we have not considered an updated parameter estimate $\hat{\Theta}_{k-1}=\{\hat{\mathbf{A}}_{k-1},\hat{\mathbf{B}}_{k-1},\hat{\mathbf{Q}}_{k-1},\hat{\mathbf{R}}_{k-1}\}$ (obtained using observations upto time `$k-1$') for computing the conditional expectations in $\mathbf{S}^{x\phi}_{k-1}$ and $\mathbf{S}^{\phi 1}_{k-1}$. The current parameter estimates are used only in computing expectations $\mathbb{E}_{k}[\mathbf{x}_{k}\bm{\Phi}(\mathbf{x}_{k-1})^{T}]$ and $\mathbb{E}_{k}[\bm{\Phi}(\mathbf{x}_{k-1})\bm{\Phi}(\mathbf{x}_{k-1})^{T}]$.
\end{remark}
\begin{remark}
Typically, in an EM algorithm, the E and M-steps are iterated a number of times considering the most recent parameter estimates in each E-step. In our RKHS-EKF, because of these approximations, the current observation is considered only once to obtain the current state estimates and update the parameter estimates.
\end{remark}
Similarly, other approximate parameter updates (by approximating the summations as in \eqref{eqn:RKHS-EKF sum x phi} and \eqref{eqn:RKHS-EKF sum phi 1}) are obtained as
\par\noindent\small
\begin{align}
    &\hat{\mathbf{Q}}_{k}=\left(1-\frac{1}{k}\right)\hat{\mathbf{Q}}_{k-1}+\frac{1}{k}(\mathbb{E}_{k}[\mathbf{x}_{k}\mathbf{x}_{k}^{T}]-\hat{\mathbf{A}}_{k}\mathbb{E}_{k}[\bm{\Phi}(\mathbf{x}_{k-1})\mathbf{x}_{k}^{T}]\nonumber\\
    &-\mathbb{E}_{k}[\mathbf{x}_{k}\bm{\Phi}(\mathbf{x}_{k-1})^{T}]\hat{\mathbf{A}}_{k}^{T}+\hat{\mathbf{A}}_{k}\mathbb{E}_{k}[\bm{\Phi}(\mathbf{x}_{k-1})\bm{\Phi}(\mathbf{x}_{k-1})^{T}]\hat{\mathbf{A}}_{k}^{T}),\label{eqn:RKHS-EKF Q estimate}\\
    &\hat{\mathbf{B}}_{k}=\mathbf{S}^{y\phi}_{k}(\mathbf{S}^{\phi}_{k})^{-1},\label{eqn:RKHS-EKF B estimate}\\
    &\hat{\mathbf{R}}_{k}=\left(1-\frac{1}{k}\right)\hat{\mathbf{R}}_{k-1}+\frac{1}{k}(\mathbb{E}_{k}[\mathbf{y}_{k}\mathbf{y}_{k}^{T}]-\hat{\mathbf{B}}_{k}\mathbb{E}_{k}[\bm{\Phi}(\mathbf{x}_{k})\mathbf{y}_{k}^{T}]\nonumber\\
    &-\mathbb{E}_{k}[\mathbf{y}_{k}\bm{\Phi}(\mathbf{x}_{k})^{T}]\hat{\mathbf{B}}_{k}^{T}+\hat{\mathbf{B}}_{k}\mathbb{E}_{k}[\bm{\Phi}(\mathbf{x}_{k})\bm{\Phi}(\mathbf{x}_{k})^{T}]\hat{\mathbf{B}}_{k}^{T}),\label{eqn:RKHS-EKF R estimate}
\end{align}
\normalsize
where the sums $\mathbf{S}^{y\phi}_{k}=\sum_{j=1}^{k}\mathbb{E}_{k}[\mathbf{y}_{j}\bm{\Phi}(\mathbf{x}_{j})^{T}]$ and $\mathbf{S}^{\phi}_{k}=\sum_{j=1}^{k}\mathbb{E}_{k}[\bm{\Phi}(\mathbf{x}_{j})\bm{\Phi}(\mathbf{x}_{j})^{T}]$ are introduced to obtain approximate online estimates and are evaluated as $\mathbf{S}^{y\phi}_{k}=\mathbf{S}^{y\phi}_{k-1}+\mathbb{E}_{k}[\mathbf{y}_{k}\bm{\Phi}(\mathbf{x}_{k})^{T}]$ and $\mathbf{S}^{\phi}_{k}=\mathbf{S}^{\phi}_{k-1}+\mathbb{E}_{k}[\bm{\Phi}(\mathbf{x}_{k})\bm{\Phi}(\mathbf{x}_{k})^{T}]$. Further, using \eqref{eqn:RKHS-EKF observation approx}, $\mathbb{E}_{k}[\mathbf{y}_{k}\mathbf{y}_{k}^{T}]=\hat{\mathbf{B}}_{k}\mathbb{E}_{k}[\bm{\Phi}(\mathbf{x}_{k})\bm{\Phi}(\mathbf{x}_{k})^{T}]\hat{\mathbf{B}}_{k}^{T}+\hat{\mathbf{R}}_{k-1}$ and $\mathbb{E}_{k}[\mathbf{y}_{k}\bm{\Phi}(\mathbf{x}_{k})^{T}]=\hat{\mathbf{B}}_{k}\mathbb{E}_{k}[\bm{\Phi}(\mathbf{x}_{k})\bm{\Phi}(\mathbf{x}_{k})^{T}]$.

\textit{Expectation computations (E-step):} As mentioned earlier, we compute the required expectations in \eqref{eqn:RKHS-EKF sum x phi}-\eqref{eqn:RKHS-EKF R estimate} using the EKF estimates. A standard EKF recursion provides a Gaussian approximation of the conditional posterior distribution of the required states given all the observations available upto current time instant. However, 
the expectations here involve a non-linear transformation $\bm{\Phi}(\cdot)$. In EKF, first-order Taylor series expansion approximates these non-linear expectations. We also need the statistics of $\bm{\Phi}(\mathbf{x}_{k-1})$ given $\mathbf{Y}^{k}$. Hence, we consider an augmented state $\mathbf{z}_{k}=[\mathbf{x}_{k}^{T} \;\mathbf{x}_{k-1}^{T}]^{T}$ to obtain a smoothed estimate $\hat{\mathbf{x}}_{k-1|k}$ of the previous state $\mathbf{x}_{k-1}$ given $\mathbf{Y}^{k}$. Using these approximations, we formulate the RKHS-EKF to jointly compute estimates $\hat{\mathbf{x}}_{k}$ and $\hat{\Theta}_{k}$ 
as follows. The computation of the required expectations is detailed in the parameters update step (Section~\ref{subsec: recursion}-3).

\subsection{
Recursion}\label{subsec: recursion}
In terms of the augmented state $\mathbf{z}_{k}$, the RKHS-EKF system model is
\par\noindent\small
\begin{align}
    &\mathbf{z}_{k}=\widetilde{f}(\mathbf{z}_{k-1})+\widetilde{\mathbf{w}}_{k-1},\label{eqn:RKHS-EKF state transition aug}\\
    &\mathbf{y}_{k}=\widetilde{h}(\mathbf{z}_{k})+\mathbf{v}_{k},\label{eqn:RKHS-EKF observation aug}
\end{align}
\normalsize
where $\widetilde{f}(\mathbf{z}_{k-1})=[(\mathbf{A}\bm{\Phi}(\mathbf{x}_{k-1}))^{T}\;\mathbf{x}_{k-1}^{T}]^{T}$ and $\widetilde{h}(\mathbf{z}_{k})=\mathbf{B}\bm{\Phi}(\mathbf{x}_{k})$. The actual noise covariance matrix of $\widetilde{\mathbf{w}}_{k-1}=[\mathbf{w}_{k-1}^{T}\;\mathbf{0}_{1\times n}]^{T}$ is $\widetilde{\mathbf{Q}}=\begin{bmatrix}\mathbf{Q}&\mathbf{0}_{n\times n}\\\mathbf{0}_{n\times n}&\mathbf{0}_{n\times n}\end{bmatrix}$. At $k$-th time instant, we have from the previous recursion, an estimate $\hat{\mathbf{z}}_{k-1}=[\hat{\mathbf{x}}_{k-1|k-1}^{T}\; \hat{\mathbf{x}}_{k-2|k-1}^{T}]^{T}$ with the associated error covariance matrix $\bm{\Sigma}^{z}_{k-1}$, coefficient matrices estimates $\hat{\mathbf{A}}_{k-1}$ and $\hat{\mathbf{B}}_{k-1}$, and noise covariance matrices estimates $\hat{\mathbf{Q}}_{k-1}$ and $\hat{\mathbf{R}}_{k-1}$. We compute estimates $\hat{\mathbf{z}}_{k}$, $\hat{\mathbf{A}}_{k}$, $\hat{\mathbf{B}}_{k}$, $\hat{\mathbf{Q}}_{k}$ and $\hat{\mathbf{R}}_{k}$ based on the new observation $\mathbf{y}_{k}$ through the procedure summarized below.\\
\textit{1) Prediction:} Using $\mathbf{A}=\hat{\mathbf{A}}_{k-1}$ and $\widetilde{\mathbf{Q}}_{k-1}=\begin{bmatrix}\hat{\mathbf{Q}}_{k-1}&\mathbf{0}_{n\times n}\\\mathbf{0}_{n\times n}&\mathbf{0}_{n\times n}\end{bmatrix}$ in \eqref{eqn:RKHS-EKF state transition aug}, compute the predicted state and associated prediction error covariance matrix as
\par\noindent\small
\begin{align}
    \hat{\mathbf{z}}_{k|k-1}=\widetilde{f}(\hat{\mathbf{z}}_{k-1}),\;\;\;\bm{\Sigma}^{z}_{k|k-1}=\widetilde{\mathbf{F}}_{k-1}\bm{\Sigma}^{z}_{k-1}\widetilde{\mathbf{F}}_{k-1}^{T}+\widetilde{\mathbf{Q}}_{k-1},\label{eqn:RKHS-EKF prediction}
\end{align}
\normalsize
where $\widetilde{\mathbf{F}}_{k-1}\doteq\nabla\widetilde{f}(\mathbf{z})|_{\mathbf{z}=\hat{\mathbf{z}}_{k-1}}$.\\
\textit{2) Measurement update:} Using $\mathbf{B}=\hat{\mathbf{B}}_{k-1}$ and $\mathbf{R}=\hat{\mathbf{R}}_{k-1}$ in \eqref{eqn:RKHS-EKF observation aug}, compute
\par\noindent\small
\begin{align}
    &\mathbf{S}_{k}=\widetilde{\mathbf{H}}_{k}\bm{\Sigma}^{z}_{k|k-1}\widetilde{\mathbf{H}}_{k}^{T}+\mathbf{R},\label{eqn:RKHS-EKF measurement update Sk}\\
    &\hat{\mathbf{z}}_{k}=\hat{\mathbf{z}}_{k|k-1}+\bm{\Sigma}^{z}_{k|k-1}\widetilde{\mathbf{H}}_{k}^{T}\mathbf{S}_{k}^{-1}(\mathbf{y}_{k}-\widetilde{h}(\hat{\mathbf{z}}_{k|k-1})),\label{eqn:RKHS-EKF measurement update zk}\\
    &\bm{\Sigma}^{z}_{k}=\bm{\Sigma}^{z}_{k|k-1}-\bm{\Sigma}^{z}_{k|k-1}\widetilde{\mathbf{H}}_{k}^{T}\mathbf{S}_{k}^{-1}\widetilde{\mathbf{H}}_{k}\bm{\Sigma}^{z}_{k|k-1},\label{eqn:RKHS-EKF measurement update sigk}
\end{align}
\normalsize
where $\widetilde{\mathbf{H}}_{k}\doteq\nabla\widetilde{h}(\mathbf{z})|_{\mathbf{z}=\hat{\mathbf{z}}_{k|k-1}}$. Here, $\hat{\mathbf{z}}_{k}=[\hat{\mathbf{x}}_{k|k}^{T}\;\hat{\mathbf{x}}_{k-1|k}^{T}]^{T}$ where $\hat{\mathbf{x}}_{k|k}$ is the RKHS-EKF's estimate of state $\mathbf{x}_{k}$. The prediction and measurement update steps follow from the standard EKF recursions to estimate the augmented state $\mathbf{z}_{k}$ with the considered system model \eqref{eqn:RKHS-EKF state transition aug} and \eqref{eqn:RKHS-EKF observation aug}.\\
\textit{3) Parameters update:} Update the parameter estimates by approximating the required expectations in \eqref{eqn:RKHS-EKF sum x phi}-\eqref{eqn:RKHS-EKF R estimate}. Consider $\mathbb{E}_{k}[\mathbf{x}_{k}\bm{\Phi}(\mathbf{x}_{k-1})^{T}]$ from \eqref{eqn:RKHS-EKF sum x phi}. Based on the standard EKF, linearize $\bm{\Phi}(\cdot)$ as
\par\noindent\small
\begin{align*}
    \bm{\Phi}(\mathbf{x}_{k-1})\approx\bm{\Phi}(\hat{\mathbf{x}}_{k-1|k})+\nabla\bm{\Phi}(\hat{\mathbf{x}}_{k-1|k})(\mathbf{x}_{k-1}-\hat{\mathbf{x}}_{k-1|k}),
\end{align*}
\normalsize
where $\nabla\bm{\Phi}(\hat{\mathbf{x}}_{k-1|k})\doteq\nabla\bm{\Phi}(\mathbf{x})|_{\mathbf{x}=\hat{\mathbf{x}}_{k-1|k}}$. Also, similar to standard EKF, we assume negligible error in computation of conditional means, i.e., $\mathbb{E}_{k}[\mathbf{x}_{k}]\approx\hat{\mathbf{x}}_{k|k}$ and $\mathbb{E}_{k}[\mathbf{x}_{k-1}]\approx\hat{\mathbf{x}}_{k-1|k}$. These yield 
\par\noindent\small
\begin{align}
    &\mathbb{E}_{k}[\mathbf{x}_{k}\bm{\Phi}(\mathbf{x}_{k-1})^{T}]=\hat{\mathbf{x}}_{k|k}\bm{\Phi}(\hat{\mathbf{x}}_{k-1|k})^{T}\nonumber\\
    &\;\;\;+\textrm{Cov}(\mathbf{x}_{k}-\hat{\mathbf{x}}_{k|k},\mathbf{x}_{k-1}-\hat{\mathbf{x}}_{k-1|k})\nabla\bm{\Phi}(\hat{\mathbf{x}}_{k-1|k})^{T}.\label{eqn:Exphi}
\end{align}
\normalsize
By definition, $\textrm{Cov}(\mathbf{x}_{k}-\hat{\mathbf{x}}_{k|k},\mathbf{x}_{k-1}-\hat{\mathbf{x}}_{k-1|k})\approx[\bm{\Sigma}^{z}_{k}]_{(1:n,n+1:2n)}$, which was computed during the measurement update. Similarly, we have
\par\noindent\small
\begin{align}
    &\mathbb{E}_{k}[\bm{\Phi}(\mathbf{x}_{k-1})\bm{\Phi}(\mathbf{x}_{k-1})^{T}]=\bm{\Phi}(\hat{\mathbf{x}}_{k-1|k})\bm{\Phi}(\hat{\mathbf{x}}_{k-1|k})^{T}\nonumber\\
    &\;\;\;+\nabla\bm{\Phi}(\hat{\mathbf{x}}_{k-1|k})\textrm{Cov}(\mathbf{x}_{k-1}-\hat{\mathbf{x}}_{k-1|k})\nabla\bm{\Phi}(\hat{\mathbf{x}}_{k-1|k})^{T},\label{eqn:Ephi1}\\
    &\mathbb{E}_{k}[\bm{\Phi}(\mathbf{x}_{k})\bm{\Phi}(\mathbf{x}_{k})^{T}]=\bm{\Phi}(\hat{\mathbf{x}}_{k|k})\bm{\Phi}(\hat{\mathbf{x}}_{k|k})^{T}\nonumber\\
    &\;\;\;+\nabla\bm{\Phi}(\hat{\mathbf{x}}_{k|k})\textrm{Cov}(\mathbf{x}_{k}-\hat{\mathbf{x}}_{k|k})\nabla\bm{\Phi}(\hat{\mathbf{x}}_{k|k})^{T},\label{eqn:Ephi}\\
    &\mathbb{E}_{k}[\mathbf{x}_{k}\mathbf{x}_{k}^{T}]=\textrm{Cov}(\mathbf{x}_{k}-\hat{\mathbf{x}}_{k|k})+\hat{\mathbf{x}}_{k|k}\hat{\mathbf{x}}_{k|k}^{T},\label{eqn:Exx}
\end{align}
\normalsize
where $\textrm{Cov}(\mathbf{x}_{k-1}-\hat{\mathbf{x}}_{k-1|k})\approx[\bm{\Sigma}^{z}_{k}]_{(n+1:2n,n+1:2n)}$ and $\textrm{Cov}(\mathbf{x}_{k}-\hat{\mathbf{x}}_{k|k})\approx[\bm{\Sigma}^{z}_{k}]_{(1:n,1:n)}$. With these expectations, the updated parameter estimates $\hat{\Theta}_{k}$ can be computed using \eqref{eqn:RKHS-EKF sum x phi}-\eqref{eqn:RKHS-EKF R estimate}.

Finally, the dictionary $\{\widetilde{\mathbf{x}}_{l}\}_{1\leq l\leq L}$ is updated using the new estimate $\hat{\mathbf{x}}_{k|k}$ based on the sliding window or ALD criterion. The following Algorithms~\ref{alg:RKHS-EKF initialization} and \ref{alg:RKHS-EKF recursion} summarize the RKHS-EKF's initialization and recursions, respectively. Note that the dictionary size increases when $\hat{\mathbf{x}}_{k|k}$ is added to the dictionary based on the ALD criterion or when the current size L is less than the considered window length (initial transient phase) in the sliding window criterion. For simplicity, we initialize all mixing parameters (elements of $\hat{\mathbf{A}}_{0}$ and $\hat{\mathbf{B}}_{0}$) with one in Algorithm~\ref{alg:RKHS-EKF initialization}. However, the mixing parameters can be initialized with arbitrary values.

\begin{remark}\label{remark:RKHS-EKF complexity}
The computational complexity of RKHS-EKF depends on the state dimension `$n$' as well as the size of the dictionary `$L$'. In particular, the prediction and measurement update steps of RKHS-EKF follow from standard EKF recursion and hence, have a computational complexity of $\mathcal{O}(d^{3})$, where $d=2n$ is the dimension of augmented state $\mathbf{z}_{k}$. The parameters update step involves matrix multiplications of complexity $\mathcal{O}(n^{3})$ and matrix inversions of complexity $\mathcal{O}(L^{3})$. The updated parameters $\hat{\mathbf{A}}_{k}$ and $\hat{\mathbf{B}}_{k}$ require inversion of $L\times L$ matrices $\mathbf{S}^{\phi 1}_{k}$ and $\mathbf{S}^{\phi}_{k}$, respectively. Note that the dictionary size $L$ is a user-defined constant under the sliding window criterion. The conditions for a finite $L$ under the ALD criterion follow from \cite[Theorem~3.1]{engel2004kernel}.
\end{remark}
\begin{algorithm}
	\caption{RKHS-EKF initialization}
	\label{alg:RKHS-EKF initialization}
    \begin{algorithmic}[1]
    \Statex \textbf{Input:} $\hat{\mathbf{x}}_{0}$, $\bm{\Sigma}_{0}$
    \Statex \textbf{Output:} $\hat{\mathbf{z}}_{0}$, $\bm{\Sigma}^{z}_{0}$, $L$, $\{\widetilde{x}_{l}\}_{1\leq l\leq L}$, $\hat{\mathbf{A}}_{0}$, $\hat{\mathbf{B}}_{0}$, $\hat{\mathbf{Q}}_{0}$, $\hat{\mathbf{R}}_{0}$, $\mathbf{S}^{x\phi}_{0}$, $\mathbf{S}^{\phi 1}_{0}$, $\mathbf{S}^{y\phi}_{0}$ and $\mathbf{S}^{\phi}_{0}$
\State $\hat{\mathbf{z}}_{0}\gets [\hat{\mathbf{x}}_{0}^{T}\;\;\hat{\mathbf{x}}_{0}^{T}]^{T}$, and $\bm{\Sigma}^{z}_{0}\gets\begin{bmatrix}\bm{\Sigma}_{0}&\mathbf{0}_{n\times n}\\\mathbf{0}_{n\times n}&\bm{\Sigma}_{0}\end{bmatrix}$.
\State Set $L=1$ and $\widetilde{\mathbf{x}}_{1}\gets\hat{\mathbf{x}}_{0}$.
\State $\hat{\mathbf{A}}_{0}\gets\mathbf{1}_{n\times L}$ and $\hat{\mathbf{B}}_{0}\gets\mathbf{1}_{p\times L}$.
\State Initialize $\hat{\mathbf{Q}}_{0}$ and $\hat{\mathbf{R}}_{0}$ with some suitable p.d. noise covariance matrices.
\State Set $\mathbf{S}^{x\phi}_{0}=\mathbf{0}_{n\times L}$, $\mathbf{S}^{\phi 1}_{0}=\mathbf{0}_{L\times L}$, $\mathbf{S}^{y\phi}_{0}=\mathbf{0}_{p\times L}$ and $\mathbf{S}^{\phi}_{0}=\mathbf{0}_{L\times L}$.

\Statex \Return $\hat{\mathbf{z}}_{0}$, $\bm{\Sigma}^{z}_{0}$, $L$, $\{\widetilde{x}_{l}\}_{1\leq l\leq L}$, $\hat{\mathbf{A}}_{0}$, $\hat{\mathbf{B}}_{0}$, $\hat{\mathbf{Q}}_{0}$, $\hat{\mathbf{R}}_{0}$, $\mathbf{S}^{x\phi}_{0}$, $\mathbf{S}^{\phi 1}_{0}$, $\mathbf{S}^{y\phi}_{0}$ and $\mathbf{S}^{\phi}_{0}$.

    \end{algorithmic}
\end{algorithm}

\begin{algorithm}
	\caption{RKHS-EKF recursion 
	}
	\label{alg:RKHS-EKF recursion}
    \begin{algorithmic}[1]
    \Statex \textbf{Input:} $\hat{\mathbf{z}}_{k-1}$, $\bm{\Sigma}^{z}_{k-1}$, $\hat{\mathbf{A}}_{k-1}$, $\hat{\mathbf{B}}_{k-1}$, $\hat{\mathbf{Q}}_{k-1}$, $\hat{\mathbf{R}}_{k-1}$, $\mathbf{S}^{x\phi}_{k-1}$, $\mathbf{S}^{\phi 1}_{k-1}$, $\mathbf{S}^{y\phi}_{k-1}$, $\mathbf{S}^{\phi}_{k-1}$, and $\mathbf{y}_{k}$
    \Statex \textbf{Output:} $\hat{\mathbf{x}}_{k|k}$, $\hat{\mathbf{z}}_{k}$, $\bm{\Sigma}^{z}_{k}$, $\hat{\mathbf{A}}_{k}$, $\hat{\mathbf{B}}_{k}$, $\hat{\mathbf{Q}}_{k}$, $\hat{\mathbf{R}}_{k}$, $\mathbf{S}^{x\phi}_{k}$, $\mathbf{S}^{\phi 1}_{k}$, $\mathbf{S}^{y\phi}_{k}$, and $\mathbf{S}^{\phi}_{k}$
\State Compute $\hat{\mathbf{z}}_{k|k-1}$ and $\bm{\Sigma}^{z}_{k|k-1}$ using \eqref{eqn:RKHS-EKF prediction}.
\State Compute $\hat{\mathbf{z}}_{k}$ and $\bm{\Sigma}^{z}_{k}$ using \eqref{eqn:RKHS-EKF measurement update Sk}-\eqref{eqn:RKHS-EKF measurement update sigk}.
\State $\hat{\mathbf{x}}_{k|k}\gets[\hat{\mathbf{z}}_{k}]_{1:n}$.
\State $\textrm{Cov}(\mathbf{x}_{k}-\hat{\mathbf{x}}_{k|k},\mathbf{x}_{k-1}-\hat{\mathbf{x}}_{k-1|k})\gets[\bm{\Sigma}^{z}_{k}]_{(1:n,n+1:2n)}$, $\textrm{Cov}(\mathbf{x}_{k-1}-\hat{\mathbf{x}}_{k-1|k})\gets[\bm{\Sigma}^{z}_{k}]_{(n+1:2n,n+1:2n)}$ and $\textrm{Cov}(\mathbf{x}_{k}-\hat{\mathbf{x}}_{k|k})\gets[\bm{\Sigma}^{z}_{k}]_{(1:n,1:n)}$.
\State Compute $\mathbb{E}_{k}[\mathbf{x}_{k}\bm{\Phi}(\mathbf{x}_{k-1})^{T}]$, $\mathbb{E}_{k}[\bm{\Phi}(\mathbf{x}_{k-1})\bm{\Phi}(\mathbf{x}_{k-1})^{T}]$, $\mathbb{E}_{k}[\bm{\Phi}(\mathbf{x}_{k})\bm{\Phi}(\mathbf{x}_{k})^{T}]$ and $\mathbb{E}_{k}[\mathbf{x}_{k}\mathbf{x}_{k}^{T}]$ using \eqref{eqn:Exphi}-\eqref{eqn:Exx}.
\State Compute $\hat{\mathbf{A}}_{k}=\mathbf{S}^{x\phi}_{k}(\mathbf{S}^{\phi 1}_{k})^{-1}$, $\hat{\mathbf{B}}_{k}$, $\hat{\mathbf{Q}}_{k}$, and $\hat{\mathbf{R}}_{k}$ using \eqref{eqn:RKHS-EKF sum x phi}-\eqref{eqn:RKHS-EKF R estimate}.
\State Update dictionary $\{\widetilde{\mathbf{x}}_{l}\}_{1\leq l\leq L}$ using the state estimate $\hat{\mathbf{x}}_{k|k}$ based on the sliding window\cite{van2006sliding} or ALD\cite{engel2004kernel} criterion.
\If{dictionary size increases}
    \Statex Augment $\hat{\mathbf{A}}_{k}$, $\hat{\mathbf{B}}_{k}$, $\mathbf{S}^{x\phi}_{k}$, $\mathbf{S}^{\phi 1}_{k}$, $\mathbf{S}^{y\phi}_{k}$, and $\mathbf{S}^{\phi}_{k}$ with suitable initial values to take into account the updated dictionary size.
\EndIf
\Statex \Return $\hat{\mathbf{x}}_{k|k}$, $\hat{\mathbf{z}}_{k}$, $\bm{\Sigma}^{z}_{k}$, $\hat{\mathbf{A}}_{k}$, $\hat{\mathbf{B}}_{k}$, $\hat{\mathbf{Q}}_{k}$, $\hat{\mathbf{R}}_{k}$, $\mathbf{S}^{x\phi}_{k}$, $\mathbf{S}^{\phi 1}_{k}$, $\mathbf{S}^{y\phi}_{k}$, and $\mathbf{S}^{\phi}_{k}$.

    \end{algorithmic}
\end{algorithm}

\section{Numerical Experiments}\label{sec:simulations}
We considered two illustrative examples to demonstrate the performance of our proposed inverse filters. We compared their estimation errors with RCRLB \cite{tichavsky1998posterior} as a benchmark. The RCRLB $\mathbb{E}\left[(\mathbf{x}_{k}-\hat{\mathbf{x}}_{k})(\mathbf{x}_{k}-\hat{\mathbf{x}}_{k})^{T}\right]\succeq\mathbf{J}_{k}^{-1}$, where $\mathbf{J}_{k}$ is the Fisher information matrix, provides a lower bound on mean-squared error (MSE). For the non-linear system given by \eqref{eqn: state transition x} and \eqref{eqn: observation y}, the forward information matrix $\mathbf{J}_{k}$ recursions reduces to \cite{xiong2006performance_ukf}
\par\noindent\small
\begin{align}
    &\mathbf{J}_{k+1}=\mathbf{Q}_{k}^{-1}\nonumber\\
    &\;\;+\mathbf{H}_{k+1}^{T}\mathbf{R}_{k+1}^{-1}\mathbf{H}_{k+1}-\mathbf{Q}_{k}^{-1}\mathbf{F}_{k}(\mathbf{J}_{k}+\mathbf{F}_{k}^{T}\mathbf{Q}_{k}^{-1}\mathbf{F}_{k})^{-1}\mathbf{F}_{k}^{T}\mathbf{Q}_{k}^{-1},\label{eqn: additive Jk recursions}
\end{align}
\normalsize
where $\mathbf{F}_{k}=\nabla_{\mathbf{x}}f(\mathbf{x})\vert_{\mathbf{x}=\mathbf{x}_{k}}$ and $\mathbf{H}_{k}=\nabla_{\mathbf{x}}h(\mathbf{x})\vert_{\mathbf{x}=\mathbf{x}_{k}}$. These recursions can be trivially extended to other system models considered in this paper as well as to compute the posterior information matrix $\overline{\mathbf{J}}_{k}$ for inverse filter's estimate $\doublehat{\mathbf{x}}_{k}$. We refer the reader to Section VI of the companion paper (Part I) \cite{singh2022inverse_part1} for further details on computation of RCRLB for forward and inverse filters' state estimates. Some recent studies on cognitive radar target tracking instead consider posterior CRLB \cite{bell2015cognitive} as a metric to tune tracking filters.

In the following, we also consider the proposed inverse filters' performance under incorrect forward filter assumption such that the misspecified CRB (MCRB)\cite{richmond2015parameter,fortunati2017performance} may provide further insights. However, the MCRB as it exists in the literature, cannot be directly applied to the mismatched forward and inverse filter case. Firstly, MCRB's existence can be guaranteed only under certain regularity conditions, like the non-singularity of a generalized Fisher information matrix and the existence of a unique pseudo-true parameter vector minimizing the Kullback-Leibler divergence (KLD) between the true and assumed densities\cite{richmond2015parameter}. These regularity conditions cannot be trivially verified for our inverse filtering problem because of the non-linear transformation of the Gaussian noise terms. Furthermore, discrete-time filtering problems require a recursively computed lower bound as the observations $\mathbf{Y}^{k}=\{\mathbf{y}_{1},\mathbf{y}_{2},\hdots,\mathbf{y}_{k}\}$ and the states $\mathbf{X}^{k}=\{\mathbf{x}_{0},\mathbf{x}_{1},\hdots,\mathbf{x}_{k}\}$ upto $k$-th time step cannot be processed jointly. To the best of our knowledge, such recursive bounds for the misspecified non-linear filtering problems have not been proposed so far. Hence, in our experiments, we only consider RCRLB computed assuming a perfectly specified model.

\subsection{Inverse filters for SOEKF and GS-EKF}\label{subsec:sim EKF SOEKF GS-EKF}
\begin{figure}
  \centering
  \includegraphics[width = 1.0\columnwidth]{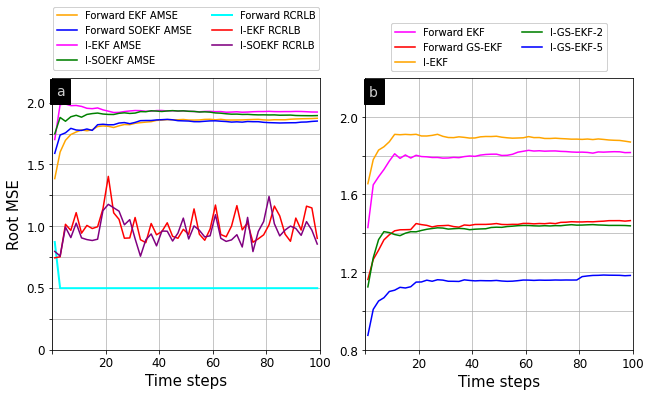}
  \caption{AMSE and RCRLB for forward and inverse filters: (a) SOEKF; (b) GS-EKF (I-GS-EKF-2 and I-GS-EKF-5, respectively, for $\overline{l}=2$ and $5$); compared to EKF and I-EKF, averaged over 500 runs.}
 \label{fig:EKF SOEKF GS-EKF}
\end{figure}
Recall from Section VI-B of the companion paper (Part I) \cite{singh2022inverse_part1}, the discrete-time non-linear system model of FM demodulator\cite[Sec. 8.2]{anderson2012optimal}. 
The state-transition and observation equations are
\par\noindent\small
\begin{align*}
&\mathbf{x}_{k+1}\doteq\begin{bmatrix}\lambda_{k+1}\\\theta_{k+1}\end{bmatrix}=\begin{bmatrix}\exp{(-T/\beta)}&0\\-\beta \exp{(-T/\beta)}-1&1\end{bmatrix}\begin{bmatrix}\lambda_{k}\\\theta_{k}\end{bmatrix}+\begin{bmatrix}1\\-\beta\end{bmatrix}w_{k},\\
&\mathbf{y}_{k}=\sqrt{2}\begin{bmatrix}\sin{\theta_{k}}\\\cos{\theta_{k}}\end{bmatrix}+\mathbf{v}_{k},\;\;
a_{k}=\hat{\lambda}_{k}^{2}+\epsilon_{k},
\end{align*}
\normalsize
with $w_{k}\sim\mathcal{N}(0,0.01)$, $\mathbf{v}_{k}\sim\mathcal{N}(\mathbf{0},\mathbf{I}_{2})$, $\epsilon_{k}\sim\mathcal{N}(0,5)$, $T=2\pi/16$ and $\beta=100$. Here, $\hat{\lambda}_{k}$ is the forward filter's estimate of $\lambda_{k}$.

The initial state $\mathbf{x}_{0}\doteq[\lambda_{0},\theta_{0}]^{T}$ and its estimates for all forward and inverse filters including mean estimates for GS-EKF were set randomly with $\lambda_{0}\sim\mathcal{N}(0,1)$ and $\theta_{0}\sim\mathcal{U}[-\pi,\pi]$. The initial covariances were set to $\bm{\Sigma}_{0}=10\mathbf{I}_{2}$ and $\overline{\bm{\Sigma}}_{0}=5\mathbf{I}_{2}$ for forward and inverse SOEKF. In the case of GS-EKF, we considered 5 Gaussians for the forward filter with the initial covariances and weights set to $10\mathbf{I}_{2}$ and $1/5$, respectively. For I-GS-EKF, an augmented state $\mathbf{z}_{k}$ of $\lbrace\overline{\mathbf{x}}_{i,k},c_{i,k}\rbrace_{1\leq i\leq 5}$ was considered resulting in a $15$-dimensional state vector with the initial weight estimates set to $1/5$ and the initial covariance estimates $\lbrace\overline{\bm{\Sigma}}_{j,0}\rbrace_{1\leq j\leq\overline{l}}$ as $5\mathbf{I}_{15}$. 
All other parameters of the system including the initial information matrix estimates were identical to those in Section VI-B in the companion paper (Part I) \cite{singh2022inverse_part1}. The Gaussian noise term $\mathbf{v}_{k+1}$ in I-GS-EKF's state transition \eqref{eqn: inverse GS-EKF weight transition} is transformed through a non-linear function $\gamma(\cdot,\cdot)$ such that \eqref{eqn: additive Jk recursions} is not applicable. The RCRLB in this case is derived using the general $\mathbf{J}_{k}$ recursions given by \cite[eq.~(21)]{tichavsky1998posterior}, which we omit here.

Fig. \ref{fig:EKF SOEKF GS-EKF} shows the time-averaged RMSE (AMSE) $=\sqrt{(\sum_{i=1}^{k}\|\mathbf{x}_{i}-\hat{\mathbf{x}}_{i}\|_{2}^{2})/nk}$ at $k$-th time step with $n$-dimensional actual state $\mathbf{x}_{i}$ and its estimate $\hat{\mathbf{x}}_{i}$, and RCRLB for both forward and inverse SOEKF and GS-EKF, averaged over $500$ runs. Fig. \ref{fig:EKF SOEKF GS-EKF} shows that the forward GS-EKF with $l=5$ performs better than both forward EKF and forward SOEKF. Considering second-order terms of the Taylor series expansion, in addition to the first-order terms, does not improve the estimation performance for this system as observed in Fig. \ref{fig:EKF SOEKF GS-EKF}a. When including second-order terms in the forward SOEKF yields smaller gains, then this is also reflected in the RCRLB of the inverse filters, i.e., I-EKF and I-SOEKF have similar lower bound values. 
Both I-EKF and I-SOEKF converge to the same steady-state estimation error values, which is also higher than that of the corresponding forward filters. Being suboptimal filters, the forward as well as inverse EKF and SOEKF do not achieve the RCRLB on the estimation error. However, the difference between AMSE and RCRLB for the inverse filters is less than that for the forward filters. We conclude that I-EKF and I-SOEKF are more efficient here. 

For GS-EKF in Fig. \ref{fig:EKF SOEKF GS-EKF}b, the estimation error of the inverse filter is same as that of the forward filter when $\overline{l}=2$ but improves significantly when $\overline{l}=5$. 
Note that this improvement in performance comes at the expense of increased computational complexity 
because the inverse filter estimates an augmented state of dimension `$l(n+1)$', which is larger than the forward filter's state dimension `$n$'. 

The I-SOEKF assume initial covariance $\bm{\Sigma}_{0}$ as $5\mathbf{I}_{2}$ (the true $\bm{\Sigma}_{0}$ of forward SOEKF is $10\mathbf{I}_{2}$) and a random initial state for these recursions. In spite of this difference in the initial estimates, I-SOEKF's error performance is comparable to that of the forward SOEKF. Interestingly, despite similar differences in the initial estimates, 
I-GS-EKF with $\overline{l}=5$ outperforms the forward GS-EKF. 
\begin{figure}
  \centering
  \includegraphics[width = 1.0\columnwidth]{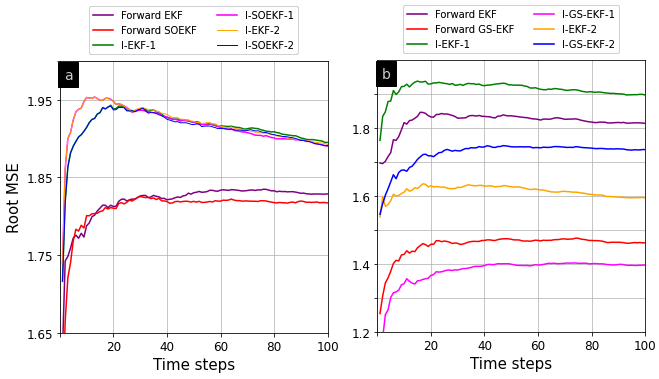}
  \caption{AMSE for forward and inverse filters: (a) I-EKF and I-SOEKF; and (b) I-EKF and I-GS-EKF. The assumptions of different inverse filters are detailed in Table~\ref{tbl:ekf mismatch describe}.}
 \label{fig:EKF mismatch}
\end{figure}
    \begin{table}
    \caption{Summary of forward-inverse filters 
    in Fig.~\ref{fig:EKF mismatch}}
    \label{tbl:ekf mismatch describe}
    \centering
    \begin{tabular}{p{0.4cm}p{2.4cm}p{2.7cm}p{1.5cm}}
    \hline\noalign{\smallskip}
    Fig. & True forward filter & Assumed forward filter & Inverse filter\\
    \noalign{\smallskip}
    \hline
    \noalign{\smallskip}
    \ref{fig:EKF mismatch}a & EKF & EKF & I-EKF-1\\
    \ref{fig:EKF mismatch}a & SOEKF & EKF & I-EKF-2\\
    \ref{fig:EKF mismatch}a & SOEKF & SOEKF & I-SOEKF-1\\
    \ref{fig:EKF mismatch}a & EKF & SOEKF & I-SOEKF-2\\
    \ref{fig:EKF mismatch}b & EKF & EKF & I-EKF-1\\
    \ref{fig:EKF mismatch}b & GS-EKF & EKF & I-EKF-2\\
    \ref{fig:EKF mismatch}b & GS-EKF & GS-EKF & I-GS-EKF-1\\
    \ref{fig:EKF mismatch}b & EKF & GS-EKF & I-GS-EKF-2\\
    \noalign{\smallskip}
    \hline\noalign{\smallskip}
    \end{tabular}
    \end{table}

In Fig.~\ref{fig:EKF mismatch}, we consider the case when the defender assumes a different forward filter from the actual adversary's filter. In Fig.~\ref{fig:EKF mismatch}b, the I-GS-EKF with $\overline{l}=5$ is considered. The adversary's true forward filter and that assumed by the defender for different inverse filters of Fig.~\ref{fig:EKF mismatch} are provided in Table~\ref{tbl:ekf mismatch describe}. The forward as well as inverse filters (with perfect information of forward filter) are also included for comparison.


From Fig.~\ref{fig:EKF mismatch}, we observe that the I-SOEKF and I-EKF have similar performance regardless of the true forward filter employed by the adversary because both the forward EKF and SOEKF also have same estimation accuracy. On the other hand, forward GS-EKF performs better than EKF for the considered system. Hence, I-EKF-2 based on observations from forward GS-EKF has lower estimation error than I-EKF-1 (with EKF as true forward filter) even though our assumption about forward filter is not true. However, I-EKF-2's estimation error is still more than I-GS-EKF-1 which knows the true forward filter. Furthermore, I-GS-EKF assuming forward GS-EKF even though the true forward filter is EKF (I-GS-EKF-2 case) also has lower estimation error than I-EKF-1, which uses perfect forward filter information. Hence, we conclude that assuming a more sophisticated forward GS-EKF and using I-GS-EKF provides higher estimation accuracy than I-EKF at the expense of computational efforts, regardless of the forward filter employed by the adversary.

\subsection{Inverse DEKF}\label{subsec:sim dithered EKF}
We consider the application of coordinate estimation of a stationary target from bearing observations taken by a moving sensor \cite{weiss1980improved}. The actual coordinates of the stationary target are $(X,Y)$ and that of the sensor at $k$-th time instant are $(p^{x}_{k},p^{y}_{k})$. The constant velocity of sensor is $s$. The forward and inverse EKF as well as DEKF were implemented in a modified coordinate basis with the state estimate $\mathbf{x}_{k}=[p^{x}_{k}/Y, s/Y, s, X/Y]^{T}$ and system model
\par\noindent\small
\begin{align*}
    &\mathbf{x}_{k+1}=\begin{bmatrix}1&\Delta t&0&0\\0&1&0&0\\0&0&1&0\\0&0&0&1\end{bmatrix}\mathbf{x}_{k}+\begin{bmatrix}0\\\Delta t/Y\\\Delta t\\0\end{bmatrix}w_{k},\\
    &y_{k}=\arctan([\mathbf{x}_{k}]_{4}-[\mathbf{x}_{k}]_{1})+v_{k},\;\;
    a_{k}=([\hat{\mathbf{x}}_{k}]_{4})^{2}+\epsilon_{k},
\end{align*}
\normalsize
where $w_{k}\sim\mathcal{N}(0,0.1^{2})$, $v_{k}\sim\mathcal{N}(0,2^{2})$, $\epsilon_{k}\sim\mathcal{N}(0,1.5^{2})$ and $\Delta t=20$ s. The initial covariance estimates were set to $\bm{\Sigma}_{0}=\textrm{diag}(4.44\times 10^{-7}, 0.5\times 10^{-6},1,0.1)$ and $\overline{\bm{\Sigma}}_{0}=\textrm{diag}(10^{-6},6\times 10^{-7},5,0.5)$, respectively, for the forward and inverse filters. 
The initial state estimate for inverse filters were $[0,0.002,200,2]^{T}$. The initial information matrices estimates $\mathbf{J}_{0}$ and $\overline{\mathbf{J}}_{0}$ were set to $\bm{\Sigma}_{0}$ and $\overline{\bm{\Sigma}}_{0}$, respectively. All other parameters of the system including the dither and the estimates were identical to those in \cite{weiss1980improved}.

With 200 time-steps, the modified observation function $h^{*}(\cdot)$ in the forward DEKF replaced $h(\cdot)$ up to 80 time-steps. 
Fig. \ref{fig:DEKF RKHS-EKF}a shows the absolute error and RCRLB, averaged over 400 runs, for estimation of $X/Y$ whose estimate at the $k$-th time instant are given by $[\hat{\mathbf{x}}_{k}]_{4}$ and $[\doublehat{\mathbf{x}}_{k}]_{4}$ of the forward and inverse filters. The RCRLB at $k$-th time instant is $\sqrt{\left[\mathbf{J}_{k}^{-1}\right]_{4,4}}$. 
The inverse filters' estimation errors were significantly lower than that of the forward filters. 
While I-DEKF-1 and I-DEKF-2 
differ in their transient performance, they converge to the same steady-state error as I-EKF. 
\begin{figure}
  \centering
  \includegraphics[width = 1.0\columnwidth]{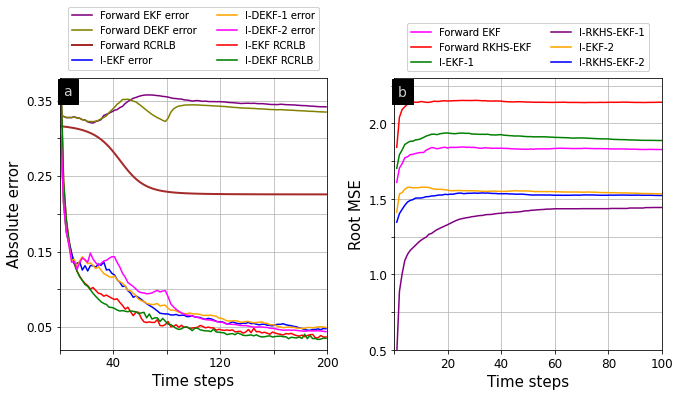}
  \caption{(a) Absolute error and RCRLB for forward and inverse EKF as well as DEKF. I-DEKF-1 is the I-DEKF without considering the modified dithered function in the inverse filter formulation while I-DEKF-2 considers the modified function. (b) AMSE for forward and inverse RKHS-EKF, compared to EKF and I-EKF, respectively. I-EKF-1 and I-EKF-2 assume the forward filter as EKF, but in case of I-EKF-2, the true forward filter is RKHS-EKF. The true forward filters are RKHS-EKF and EKF, respectively, for I-RKHS-EKF-1 and I-RKHS-EKF-2.}
 \label{fig:DEKF RKHS-EKF}
\end{figure}

\subsection{Inverse RKHS-EKF}\label{subsec:sim RKHS}
Consider the same FM-demodulator system as in Section~\ref{subsec:sim EKF SOEKF GS-EKF}. The defender employs RKHS-EKF to infer the adversary's state estimate while the forward filter is a simplified RKHS-EKF for known observation function. The initial state estimates for both filters were drawn at random similar to Section~\ref{subsec:sim EKF SOEKF GS-EKF}. For both forward and inverse RKHS-EKF, we considered the Gaussian kernel with kernel width $\sigma$ set to $\sigma^{2}=30$ and $50$, respectively. The initial covariance estimates ($\bm{\Sigma}^{z}_{0}$) were $10\mathbf{I}_{4}$ and $5\mathbf{I}_{4}$, respectively, for forward and inverse RKHS-EKF. To construct the forward RKHS-EKF's dictionary, we used the sliding window criterion\cite{van2006sliding} with window length $2$. On the other hand, the dictionary for inverse RKHS-EKF (I-RKHS-EKF) was constructed using ALD criterion \cite{engel2004kernel}. All the initial coefficient estimates (elements of $\hat{\mathbf{A}}_{0}$, and $\hat{\mathbf{B}}_{0}$) were set to $1$. The noise covariance estimates $\hat{\mathbf{Q}}_{0}$ and $\hat{\mathbf{R}}_{0}$ were $\textrm{diag}(1, 10)$ and $5$, respectively. All other system parameters were same as in Section~\ref{subsec:sim EKF SOEKF GS-EKF}.

Fig.~\ref{fig:DEKF RKHS-EKF}b shows the AMSE for forward and inverse RKHS-EKF, compared to EKF and I-EKF, respectively. We also include the case when the inverse filter assumes an incorrect forward filter. In particular, I-EKF assumes a forward EKF employed by the adversary. While I-EKF-1 considers the correct forward filter, the true forward filter is RKHS-EKF in case of I-EKF-2. On the other hand, I-RKHS-EKF does not make any such assumption. The true forward filters are RKHS-EKF and EKF for I-RKHS-EKF-1 and I-RKHS-EKF-2, respectively.

From Fig.~\ref{fig:DEKF RKHS-EKF}b, we observe that forward RKHS-EKF has higher estimation error than forward EKF because it does not have any prior information about the evolution of state $\mathbf{x}_{k}$. The I-EKF, with perfect knowledge of the system model, performs worse than the forward EKF. On the contrary, I-RKHS-EKF (I-RKHS-EKF-1 and I-RKHS-EKF-2) without any prior information, has smaller error than the corresponding forward filter. The improved performance of RKHS-based inverse filter is explained by the fact that it is a kernel function approximation and learns the unknown system dynamics completely based on its observations. On the other hand, I-EKF uses a first-order Taylor approximation for the non-linear functions. Here, the Gaussian kernel provides a better function approximation than linearization. Interestingly, with the true forward filter as RKHS-EKF, both I-RKHS-EKF-1 and I-EKF-2 have higher estimation accuracy than the forward filter, which is better than forward EKF and I-EKF-1 as well. This suggests that an inverse filter may have a lower estimation error even though the forward filter is inaccurate in its estimates.

\section{Summary}\label{sec:summary}
We developed the I-EKF theory of the companion paper (Part I) \cite{singh2022inverse_part1} to highly non-linear systems that employ inverses of SOEKF, GS-EKF, and DEKF. 
We showed that the stability of forward SOEKF is sufficient for I-SOEKF to be stable provided the system model satisfies certain additional assumptions. The inverse filters developed in the companion paper (Part I) \cite{singh2022inverse_part1} and those based on EKF variants are limited in the sense that they assume a known forward filter. The prior work \cite{krishnamurthy2019how} also assumes perfect system information for inverse filters. To this end, we proposed RKHS-EKF for such uncertain systems. The unknown functions are approximated using a kernel while the unknown parameters are learnt online using an approximate EM algorithm coupled with EKF recursions. Numerical examples demonstrate the estimation efficacy of the proposed filters with RCRLB as a performance measure. 
Our experiments with mismatched forward filters suggest that the inverse filter that assumes a similar advanced forward filter may provide better state estimates but at a high computational cost. While \cite{bell2015cognitive,sharaga2015optimal} suggest adapting a radar's waveform based on its observations, the developed inverse filters consider observing such a cognitive radar's adaptations and estimating its information for a counter-adversarial defender to guard against it.

\appendices

\section{Proof of Theorem~\ref{theorem: forward SOEKF stability}}
\label{App-thm-forward SOEKF stability}

\subsection{Preliminaries}

\begin{lemma}
\label{lemma: SOEKF stable bounds}
Under the assumptions of Theorem \ref{theorem: forward SOEKF stability}, the following bounds hold true for all $k\geq 0$.
\begin{enumerate}
    \item $\sum_{i=1}^{n}\sum_{j=1}^{n}\widetilde{\mathbf{a}}_{i}\widetilde{\mathbf{a}}_{j}^{T}\textrm{Tr}\left(\nabla^{2}\left[f(\hat{\mathbf{x}}_{k})\right]_{i}\bm{\Sigma}_{k}\nabla^{2}\left[f(\hat{\mathbf{x}}_{k})\right]_{j}\bm{\Sigma}_{k}\right)$ is a p.s.d. matrix and satisfies the upper bound
    \par\noindent\small
    \begin{align*}
        \sum_{i=1}^{n}\sum_{j=1}^{n}\widetilde{\mathbf{a}}_{i}\widetilde{\mathbf{a}}_{j}^{T}\textrm{Tr}\left(\nabla^{2}\left[f(\hat{\mathbf{x}}_{k})\right]_{i}\bm{\Sigma}_{k}\nabla^{2}\left[f(\hat{\mathbf{x}}_{k})\right]_{j}\bm{\Sigma}_{k}\right)\preceq\overline{a}^{2}\overline{\sigma}^{2}n^{2}\mathbf{I}.
    \end{align*}
    \normalsize
    \item $\sum_{i=1}^{p}\sum_{j=1}^{p}\mathbf{b}_{i}\mathbf{b}_{j}^{T}\textrm{Tr}\left(\nabla^{2}\left[h(\hat{\mathbf{x}}_{k})\right]_{i}\bm{\Sigma}_{k}\nabla^{2}\left[h(\hat{\mathbf{x}}_{k})\right]_{j}\bm{\Sigma}_{k}\right)$ is a p.s.d. matrix and satisfies the upper bound
    \par\noindent\small
    \begin{align*}
        \sum_{i=1}^{p}\sum_{j=1}^{p}\mathbf{b}_{i}\mathbf{b}_{j}^{T}\textrm{Tr}\left(\nabla^{2}\left[h(\hat{\mathbf{x}}_{k})\right]_{i}\bm{\Sigma}_{k}\nabla^{2}\left[h(\hat{\mathbf{x}}_{k})\right]_{j}\bm{\Sigma}_{k}\right)\preceq\overline{b}^{2}\overline{\sigma}^{2}np\mathbf{I}.
    \end{align*}
    \normalsize
    \item $\|\mathbf{M}_{k}\|\leq\beta$ with $\beta>0$.
\end{enumerate}
\end{lemma}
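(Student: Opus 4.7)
The plan is to treat each of the three claims by rewriting the doubly-indexed sums as quadratic (or bilinear) forms in $\widetilde{\mathbf{a}}_i,\mathbf{b}_j$ coefficients, so that each $n\times n$ (resp.\ $n\times p$) ``outer'' matrix is characterized entry-wise by a trace expression, and then controlling these traces via Cauchy--Schwarz together with the spectral bounds on the Hessians and on $\bm{\Sigma}_k$.

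For part (1), denote the matrix in question by $\mathbf{T}_k$, with entries $[\mathbf{T}_k]_{ij}=\mathrm{Tr}(\nabla^2[f(\hat{\mathbf{x}}_k)]_i\,\bm{\Sigma}_k\,\nabla^2[f(\hat{\mathbf{x}}_k)]_j\,\bm{\Sigma}_k)$. For any $\mathbf{u}\in\mathbb{R}^n$ set $\mathbf{A}_u\doteq\sum_{i=1}^n u_i\nabla^2[f(\hat{\mathbf{x}}_k)]_i$, which is symmetric since each Hessian is. Then
\[
\mathbf{u}^T\mathbf{T}_k\mathbf{u}=\mathrm{Tr}(\mathbf{A}_u\bm{\Sigma}_k\mathbf{A}_u\bm{\Sigma}_k)=\mathrm{Tr}\bigl((\bm{\Sigma}_k^{1/2}\mathbf{A}_u\bm{\Sigma}_k^{1/2})^2\bigr)=\|\bm{\Sigma}_k^{1/2}\mathbf{A}_u\bm{\Sigma}_k^{1/2}\|_F^2\ge 0,
\]
which settles positive semidefiniteness. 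For the upper bound, I would use $\|\mathbf{M}\|_F^2\le n\|\mathbf{M}\|^2$ for $n\times n$ matrices, combined with the sub-multiplicativity of the spectral norm, to get $\|\bm{\Sigma}_k^{1/2}\mathbf{A}_u\bm{\Sigma}_k^{1/2}\|_F^2\le n\,\overline{\sigma}\,\|\mathbf{A}_u\|^2\,\overline{\sigma}$. The Hessian bound yields $\|\mathbf{A}_u\|\le\sum_i|u_i|\,\overline{a}\le\overline{a}\sqrt{n}\,\|\mathbf{u}\|_2$ by Cauchy--Schwarz on $\|\mathbf{u}\|_1\le\sqrt{n}\|\mathbf{u}\|_2$. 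Taking unit $\mathbf{u}$ gives $\mathbf{u}^T\mathbf{T}_k\mathbf{u}\le n^2\overline{a}^2\overline{\sigma}^2$, i.e., $\mathbf{T}_k\preceq n^2\overline{a}^2\overline{\sigma}^2\mathbf{I}$.

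Part (2) is structurally identical: for $\mathbf{v}\in\mathbb{R}^p$ define $\mathbf{B}_v\doteq\sum_{j=1}^p v_j\nabla^2[h(\hat{\mathbf{x}}_k)]_j\in\mathbb{R}^{n\times n}$, which is again symmetric. The same Frobenius-norm identity gives $\mathbf{v}^T(\cdot)\mathbf{v}=\|\bm{\Sigma}_k^{1/2}\mathbf{B}_v\bm{\Sigma}_k^{1/2}\|_F^2\ge 0$, and the same chain of inequalities yields $\mathbf{v}^T(\cdot)\mathbf{v}\le n\,\overline{\sigma}^2\,\|\mathbf{B}_v\|^2\le n\,\overline{\sigma}^2\,\overline{b}^2\,p$ for $\|\mathbf{v}\|_2=1$, establishing the $\overline{b}^2\overline{\sigma}^2 np\,\mathbf{I}$ bound. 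The only book-keeping subtlety is that the outer sum here lives in $\mathbb{R}^{p\times p}$ while the inner Frobenius norm is still over $n\times n$ matrices, which explains the product $np$ appearing in the bound.

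For part (3), the same bilinear-form rewrite gives $\mathbf{u}^T\mathbf{M}_k\mathbf{v}=\tfrac{1}{2}\mathrm{Tr}(\mathbf{A}_u\bm{\Sigma}_k\mathbf{B}_v\bm{\Sigma}_k)$ for unit $\mathbf{u}\in\mathbb{R}^n,\mathbf{v}\in\mathbb{R}^p$. Applying the Cauchy--Schwarz inequality $|\mathrm{Tr}(\mathbf{X}\mathbf{Y})|\le\|\mathbf{X}\|_F\|\mathbf{Y}\|_F$ with $\mathbf{X}=\bm{\Sigma}_k^{1/2}\mathbf{A}_u\bm{\Sigma}_k^{1/2}$ and $\mathbf{Y}=\bm{\Sigma}_k^{1/2}\mathbf{B}_v\bm{\Sigma}_k^{1/2}$, and then reusing the bounds from parts~(1) and~(2), gives $\|\mathbf{M}_k\|\le\tfrac{1}{2}\,\overline{a}\,\overline{b}\,\overline{\sigma}^2\,n^{3/2}\sqrt{p}$, so the conclusion holds with $\beta\doteq\tfrac{1}{2}\overline{a}\overline{b}\overline{\sigma}^2 n^{3/2}\sqrt{p}$. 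No step of this plan is genuinely hard; the only minor obstacle is that the stated bounds use $\overline{a},\overline{b}$ rather than $\max(|\underline{a}|,\overline{a})$ and $\max(|\underline{b}|,\overline{b})$ as the spectral-norm bounds on the Hessians, so I would either tacitly assume $\overline{a}\ge|\underline{a}|$ and $\overline{b}\ge|\underline{b}|$ (which is the natural reading in this context) or absorb the distinction into the constants without changing the form of the conclusions.
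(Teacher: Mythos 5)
Your proof is correct and lands on exactly the paper's constants, including $\beta=\tfrac{1}{2}\overline{a}\,\overline{b}\,\overline{\sigma}^{2}n\sqrt{np}$ (your $n^{3/2}\sqrt{p}$ is the same quantity), but it gets there by a genuinely different mechanism. The paper works entrywise: it bounds each scalar trace by $\overline{a}^{2}\overline{\sigma}^{2}n$ (resp.\ $\overline{b}^{2}\overline{\sigma}^{2}n$), then multiplies by the largest eigenvalue $n$ of the all-ones matrix $\sum_{i,j}\widetilde{\mathbf{a}}_{i}\widetilde{\mathbf{a}}_{j}^{T}$ (and by the maximal singular value $\sqrt{np}$ of $\sum_{i,j}\widetilde{\mathbf{a}}_{i}\mathbf{b}_{j}^{T}$ for $\mathbf{M}_k$), and asserts positive semidefiniteness from that of the all-ones matrix. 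You instead contract against test vectors and identify $\mathbf{u}^{T}\mathbf{T}_{k}\mathbf{u}=\|\bm{\Sigma}_{k}^{1/2}\mathbf{A}_{u}\bm{\Sigma}_{k}^{1/2}\|_{F}^{2}$, i.e., you exhibit $\mathbf{T}_{k}$ as a Gram matrix in the Frobenius inner product and then run norm inequalities and Cauchy--Schwarz. Your route buys something real: the Gram-matrix identity is an airtight proof of positive semidefiniteness, whereas the paper's stated justification (nonnegative entrywise weights on a p.s.d.\ all-ones matrix) does not by itself imply p.s.d.-ness, so your argument actually repairs the weakest step of the published proof while reproducing its upper bounds. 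Both arguments share the same implicit identification, which you flag explicitly, that the spectral norms of the Hessians are controlled by $\overline{a}$ and $\overline{b}$ rather than by $\max(|\underline{a}|,\overline{a})$ and $\max(|\underline{b}|,\overline{b})$; the paper makes the same reading without comment, so this is not a defect relative to the intended hypotheses.
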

\begin{proof}
Using the bounds from the assumptions of Theorem \ref{theorem: forward SOEKF stability}, we have for all $i,j\in\lbrace1,2,\hdots,n\rbrace$
\par\noindent\small
\begin{align*}
\underline{a}^{2}\underline{\sigma}^{2}\mathbf{I}\preceq\nabla^{2}\left[f(\hat{\mathbf{x}}_{k})\right]_{i}\bm{\Sigma}_{k}\nabla^{2}\left[f(\hat{\mathbf{x}}_{k})\right]_{j}\bm{\Sigma}_{k}\preceq\overline{a}^{2}\overline{\sigma}^{2}\mathbf{I},
\end{align*}
which implies
\begin{align*}
\underline{a}^{2}\underline{\sigma}^{2}n\leq \textrm{Tr}\left(\nabla^{2}\left[f(\hat{\mathbf{x}}_{k})\right]_{i}\bm{\Sigma}_{k}\nabla^{2}\left[f(\hat{\mathbf{x}}_{k})\right]_{j}\bm{\Sigma}_{k}\right)\leq\overline{a}^{2}\overline{\sigma}^{2}n.
\end{align*}
\normalsize
Now, $\sum_{i=1}^{n}\sum_{j=1}^{n}\widetilde{\mathbf{a}}_{i}\widetilde{\mathbf{a}}_{j}^{T}$ is an $n\times n$ all-ones matrix with $n$ as one of its eigenvalue and all other ($n-1$) eigenvalues are zero. Hence, $\sum_{i=1}^{n}\sum_{j=1}^{n}\widetilde{\mathbf{a}}_{i}\widetilde{\mathbf{a}}_{j}^{T}\textrm{Tr}\left(\nabla^{2}\left[f(\hat{\mathbf{x}}_{k})\right]_{i}\bm{\Sigma}_{k}\nabla^{2}\left[f(\hat{\mathbf{x}}_{k})\right]_{j}\bm{\Sigma}_{k}\right)$ is a p.s.d. matrix and satisfies the first bound in the lemma. Similarly, the bound on $\sum_{i=1}^{p}\sum_{j=1}^{p}\mathbf{b}_{i}\mathbf{b}_{j}^{T}\textrm{Tr}\left(\nabla^{2}\left[h(\hat{\mathbf{x}}_{k})\right]_{i}\bm{\Sigma}_{k}\nabla^{2}\left[h(\hat{\mathbf{x}}_{k})\right]_{j}\bm{\Sigma}_{k}\right)$ can be derived. Further, the maximum singular value of $\sum_{i=1}^{n}\sum_{j=1}^{p}\widetilde{\mathbf{a}}_{i}\mathbf{b}_{j}^{T}$ is $\sqrt{np}$ and hence, using the bounds, we can show that $\|\mathbf{M}_{k}\|$ satisfies $\|\mathbf{M}_{k}\|\leq\beta$ with $\beta=\frac{1}{2}\overline{a}\overline{b}\overline{\sigma}^{2}n\sqrt{np}$.
\end{proof}

\begin{lemma}
\label{lemma: SOEKF stable alpha term}
Under the assumptions of Theorem \ref{theorem: forward SOEKF stability}, there exists a real number $\alpha$ with $0<\alpha<1$ such that
\par\noindent\small
\begin{align*}
    (\mathbf{F}_{k}-\mathbf{K}_{k}\mathbf{H}_{k})^{T}\bm{\Sigma}_{k+1}^{-1}(\mathbf{F}_{k}-\mathbf{K}_{k}\mathbf{H}_{k})\preceq(1-\alpha)\bm{\Sigma}_{k}^{-1}.
\end{align*}
\normalsize
\end{lemma}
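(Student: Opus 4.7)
The plan is to obtain a Joseph-style decomposition of $\bm{\Sigma}_{k+1}$ that isolates the term $(\mathbf{F}_k-\mathbf{K}_k\mathbf{H}_k)\bm{\Sigma}_k(\mathbf{F}_k-\mathbf{K}_k\mathbf{H}_k)^T$, bound the remainder from below by a multiple of the identity, and then invert the resulting inequality to obtain the desired spectral bound on $\bm{\Sigma}_{k+1}^{-1}$. This mirrors the technique used in the standard EKF bounded-nonlinearity analysis, but must absorb the second-order trace terms $\Xi_k^f$, $\Xi_k^h$ and the cross-term $\mathbf{M}_k$ that are special to one-step SOEKF.

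\textbf{Key steps.} First, I would substitute $\mathbf{K}_k=(\mathbf{F}_k\bm{\Sigma}_k\mathbf{H}_k^T+\mathbf{M}_k)\mathbf{S}_k^{-1}$ from \eqref{eqn: one step SOEKF Kk} into $\bm{\Sigma}_{k+1}=\widetilde{\bm{\Sigma}}_k-\mathbf{K}_k\mathbf{S}_k\mathbf{K}_k^T$ of \eqref{eqn: one step SOEKF sig update}, expand using the forms of $\widetilde{\bm{\Sigma}}_k$ and $\mathbf{S}_k$, and regroup so that
\[
\bm{\Sigma}_{k+1}=(\mathbf{F}_k-\mathbf{K}_k\mathbf{H}_k)\bm{\Sigma}_k(\mathbf{F}_k-\mathbf{K}_k\mathbf{H}_k)^T+\mathbf{Q}+\mathbf{K}_k\mathbf{R}\mathbf{K}_k^T+\mathcal{E}_k,
\]
where $\mathcal{E}_k\doteq\Xi_k^f+\mathbf{K}_k\Xi_k^h\mathbf{K}_k^T-\mathbf{M}_k\mathbf{K}_k^T-\mathbf{K}_k\mathbf{M}_k^T$ collects all second-order contributions. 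Second, I would bound each piece of $\mathcal{E}_k$ in spectral norm using Lemma~\ref{lemma: SOEKF stable bounds} (giving control of $\Xi_k^f$, $\Xi_k^h$, $\|\mathbf{M}_k\|\le\beta$) together with the bound $\|\mathbf{K}_k\|\le(\overline{f}\overline{\sigma}\overline{h}+\beta)/\underline{r}$, which follows from $\mathbf{S}_k\succeq\underline{r}\mathbf{I}$; this yields $\mathcal{E}_k\succeq -c\,\mathbf{I}$ for a constant $c$ depending only on the bounds of Theorem~\ref{theorem: forward SOEKF stability}, and in particular the constant $c$ in \eqref{eqn: SOEKF stable constraint on q}. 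Condition \eqref{eqn: SOEKF stable constraint on q} then gives $\mathbf{Q}+\mathcal{E}_k\succeq(\underline{q}-c)\mathbf{I}\succ 0$, so that
\[
\bm{\Sigma}_{k+1}\succeq(\mathbf{F}_k-\mathbf{K}_k\mathbf{H}_k)\bm{\Sigma}_k(\mathbf{F}_k-\mathbf{K}_k\mathbf{H}_k)^T+\eta\mathbf{I}
\]
for some $\eta>0$.

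Third, I would invert this inequality. Writing $\mathbf{A}_k\doteq\mathbf{F}_k-\mathbf{K}_k\mathbf{H}_k=\mathbf{F}_k(\mathbf{I}-\mathbf{F}_k^{-1}\mathbf{K}_k\mathbf{H}_k)$ and using $\|\mathbf{F}_k^{-1}\|\le\widetilde{f}$ together with the magnitude bound on $\mathbf{K}_k$, condition \eqref{eqn: SOEKF stable constraint on inverse norm} forces $\|\mathbf{F}_k^{-1}\mathbf{K}_k\mathbf{H}_k\|<1$, so $\mathbf{A}_k$ is invertible and $\|\mathbf{A}_k\|$ admits a uniform upper bound $\overline{A}$. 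A direct Woodbury computation then yields
\[
\mathbf{A}_k^T\bigl(\mathbf{A}_k\bm{\Sigma}_k\mathbf{A}_k^T+\eta\mathbf{I}\bigr)^{-1}\mathbf{A}_k=\bigl(\bm{\Sigma}_k+\eta(\mathbf{A}_k^T\mathbf{A}_k)^{-1}\bigr)^{-1}\preceq\bigl(\bm{\Sigma}_k+\tfrac{\eta}{\overline{A}^2}\mathbf{I}\bigr)^{-1}\preceq(1-\alpha)\bm{\Sigma}_k^{-1},
\]
with $\alpha=\eta/(\eta+\overline{A}^2\overline{\sigma})\in(0,1)$, and combining this with $\bm{\Sigma}_{k+1}^{-1}\preceq(\mathbf{A}_k\bm{\Sigma}_k\mathbf{A}_k^T+\eta\mathbf{I})^{-1}$ gives the claim.

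\textbf{Main obstacle.} The decisive difficulty lies in the indefinite cross-terms $-\mathbf{M}_k\mathbf{K}_k^T-\mathbf{K}_k\mathbf{M}_k^T$ inside $\mathcal{E}_k$: they have no a priori sign and must be dominated by $\mathbf{Q}$. This absorption is only possible once $\|\mathbf{K}_k\|$ is controlled, which in turn requires both $\mathbf{S}_k^{-1}\preceq\underline{r}^{-1}\mathbf{I}$ and the invertibility-driven estimate on $\mathbf{A}_k$; the precise arithmetic balancing of these bounds is exactly what forces condition \eqref{eqn: SOEKF stable constraint on inverse norm}, and making the final constants $c$, $\eta$, $\alpha$ consistent with \eqref{eqn: SOEKF stable constraint on q} and \eqref{eqn: SOEKF stable constraint on delta} will be the most delicate bookkeeping in the proof.
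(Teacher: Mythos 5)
Your overall architecture coincides with the paper's: both arguments lower-bound $\bm{\Sigma}_{k+1}$ by $(\mathbf{F}_k-\mathbf{K}_k\mathbf{H}_k)\bm{\Sigma}_k(\mathbf{F}_k-\mathbf{K}_k\mathbf{H}_k)^T$ plus a positive multiple of the identity and then conclude with the Reif-style inversion $\mathbf{A}_k^T(\mathbf{A}_k\bm{\Sigma}_k\mathbf{A}_k^T+\eta\mathbf{I})^{-1}\mathbf{A}_k\preceq(1-\alpha)\bm{\Sigma}_k^{-1}$. Your exact Joseph-form identity is correct (I verified the regrouping) and is in fact slightly tighter than the paper's one-sided bound: after discarding the p.s.d. pieces $\mathbf{K}_k\mathbf{R}\mathbf{K}_k^T$, $\Xi_k^f$, $\mathbf{K}_k\Xi_k^h\mathbf{K}_k^T$, only the symmetric indefinite part $-\mathbf{M}_k\mathbf{K}_k^T-\mathbf{K}_k\mathbf{M}_k^T$ must be absorbed by $\mathbf{Q}$, whereas the paper also has to bound $\mathbf{K}_k\mathbf{H}_k\bm{\Sigma}_k(\mathbf{F}_k-\mathbf{K}_k\mathbf{H}_k)^T$, yielding a larger constant $c$. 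Since the paper's $c$ dominates yours, condition \eqref{eqn: SOEKF stable constraint on q} still suffices, and the final Woodbury computation and the formula $\alpha=\eta/(\eta+\overline{A}^2\overline{\sigma})$ are correct.

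The genuine gap is your invertibility step for $\mathbf{A}_k=\mathbf{F}_k-\mathbf{K}_k\mathbf{H}_k$. You claim that \eqref{eqn: SOEKF stable constraint on inverse norm} forces $\|\mathbf{F}_k^{-1}\mathbf{K}_k\mathbf{H}_k\|<1$, but chasing your own constants shows otherwise: $\|\mathbf{F}_k^{-1}\mathbf{K}_k\mathbf{H}_k\|\leq\widetilde{f}\,\overline{h}\,(\overline{f}\overline{\sigma}\overline{h}+\beta)/\underline{r}$, and since $\beta=\tfrac{1}{2}\overline{a}\overline{b}\overline{\sigma}^2 n\sqrt{np}$, condition \eqref{eqn: SOEKF stable constraint on inverse norm} is exactly $\widetilde{f}<\underline{r}/(\overline{h}\beta)$; substituting gives only $\|\mathbf{F}_k^{-1}\mathbf{K}_k\mathbf{H}_k\|<(\overline{f}\overline{\sigma}\overline{h}+\beta)/\beta=1+\overline{f}\overline{\sigma}\overline{h}/\beta>1$, so the Neumann-series argument does not close. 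The paper instead establishes invertibility via the matrix inversion lemma: it writes $\mathbf{F}_k^{-1}(\mathbf{F}_k-\mathbf{K}_k\mathbf{H}_k)\bm{\Sigma}_k=\bm{\Sigma}_k-\bm{\Sigma}_k(\mathbf{H}_k^T+\bm{\Sigma}_k^{-1}\mathbf{F}_k^{-1}\mathbf{M}_k)\mathbf{S}_k^{-1}\mathbf{H}_k\bm{\Sigma}_k$ and recognizes the right-hand side as $(\mathbf{A}+\mathbf{U}\mathbf{C}\mathbf{V})^{-1}$, which only requires invertibility of $\mathbf{R}+\Xi_k^h-\mathbf{H}_k\mathbf{F}_k^{-1}\mathbf{M}_k$, i.e.\ $\|\mathbf{H}_k\mathbf{F}_k^{-1}\mathbf{M}_k\|\leq\overline{h}\widetilde{f}\beta<\underline{r}$ --- precisely what \eqref{eqn: SOEKF stable constraint on inverse norm} delivers. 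The point is that the $\overline{f}\overline{\sigma}\overline{h}$ contribution to $\|\mathbf{K}_k\|$ comes from $\bm{\Sigma}_k\mathbf{H}_k^T\mathbf{S}_k^{-1}\mathbf{H}_k$, which is harmless in the matrix-inversion-lemma form (it is already present in the standard EKF, where no such condition is needed); only the $\mathbf{M}_k$ part must be small relative to $\underline{r}$. Replace your Neumann argument with this one and the rest of your proof goes through.
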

\begin{proof}
Using \eqref{eqn: one step SOEKF sig tilde} and \eqref{eqn: one step SOEKF sig update}, we have
\par\noindent\small
\begin{align*}
    \bm{\Sigma}_{k+1}&=\mathbf{F}_{k}\bm{\Sigma}_{k}\mathbf{F}_{k}^{T}+\mathbf{Q}_{k}-\mathbf{K}_{k}\mathbf{S}_{k}\mathbf{K}_{k}^{T}\\
    &+\frac{1}{2}\sum_{i=1}^{n}\sum_{j=1}^{n}\widetilde{\mathbf{a}}_{i}\widetilde{\mathbf{a}}_{j}^{T}\textrm{Tr}\left(\nabla^{2}\left[f(\hat{\mathbf{x}}_{k})\right]_{i}\bm{\Sigma}_{k}\nabla^{2}\left[f(\hat{\mathbf{x}}_{k})\right]_{j}\bm{\Sigma}_{k}\right).
\end{align*}
\normalsize
Using the positive semi-definiteness of the last term (as proved in Lemma \ref{lemma: SOEKF stable bounds}) and substituting for $\mathbf{K}_{k}\mathbf{S}_{k}$ using \eqref{eqn: one step SOEKF Kk}, we have
\par\noindent\small
\begin{align*}
    \bm{\Sigma}_{k+1}\succeq\mathbf{F}_{k}\bm{\Sigma}_{k}\mathbf{F}_{k}^{T}+\mathbf{Q}_{k}-(\mathbf{F}_{k}\bm{\Sigma}_{k}\mathbf{H}_{k}^{T}+\mathbf{M}_{k})\mathbf{K}_{k}^{T}.
\end{align*}
\normalsize
Rearranging the terms,
\par\noindent\small
\begin{align*}
    \bm{\Sigma}_{k+1}\succeq&(\mathbf{F}_{k}-\mathbf{K}_{k}\mathbf{H}_{k})\bm{\Sigma}_{k}(\mathbf{F}_{k}-\mathbf{K}_{k}\mathbf{H}_{k})^{T}+\mathbf{Q}_{k}\\
    &+\mathbf{K}_{k}\mathbf{H}_{k}\bm{\Sigma}_{k}(\mathbf{F}_{k}-\mathbf{K}_{k}\mathbf{H}_{k})^{T}-\mathbf{M}_{k}\mathbf{K}_{k}^{T}.
\end{align*}
\normalsize
First, we consider the last two terms. Putting $\mathbf{M}_{k}=\mathbf{K}_{k}\mathbf{S}_{k}-\mathbf{F}_{k}\bm{\Sigma}_{k}\mathbf{H}_{k}^{T}$, the last terms can be expressed as
\par\noindent\small
\begin{align*}
    &\mathbf{K}_{k}\mathbf{H}_{k}\bm{\Sigma}_{k}(\mathbf{F}_{k}-\mathbf{K}_{k}\mathbf{H}_{k})^{T}-\mathbf{M}_{k}\mathbf{K}_{k}^{T}\\
    &=\mathbf{K}_{k}\mathbf{H}_{k}\bm{\Sigma}_{k}\mathbf{F}_{k}^{T}+\mathbf{F}_{k}\bm{\Sigma}_{k}\mathbf{H}_{k}^{T}\mathbf{K}_{k}^{T}-\mathbf{K}_{k}(\mathbf{S}_{k}+\mathbf{H}_{k}\bm{\Sigma}_{k}\mathbf{H}_{k}^{T})\mathbf{K}_{k}^{T}.
\end{align*}
\normalsize
Let $\mathbf{A}=\mathbf{K}_{k}\mathbf{H}_{k}\bm{\Sigma}_{k}\mathbf{F}_{k}^{T}$. Using the bounds of Theorem \ref{theorem: forward SOEKF stability} and Lemma \ref{lemma: SOEKF stable bounds}, we can show that $\|\mathbf{A}\|\leq\frac{\overline{f}\overline{\sigma}\overline{h}(\overline{f}\overline{\sigma}\overline{h}+\beta)}{\underline{r}}$. Also, $\mathbf{A}+\mathbf{A}^{T}$ is a symmetric `$n\times n$' matrix with
\par\noindent\small
\begin{align*}
    \|\mathbf{A}+\mathbf{A}^{T}\|\leq\frac{2\overline{f}\overline{\sigma}\overline{h}(\overline{f}\overline{\sigma}\overline{h}+\beta)}{\underline{r}}.
\end{align*}
\normalsize
This implies
\par\noindent\small
\begin{align*}
    -\frac{2\overline{f}\overline{\sigma}\overline{h}(\overline{f}\overline{\sigma}\overline{h}+\beta)}{\underline{r}}\mathbf{I}\preceq\mathbf{A}+\mathbf{A}^{T}\preceq\frac{2\overline{f}\overline{\sigma}\overline{h}(\overline{f}\overline{\sigma}\overline{h}+\beta)}{\underline{r}}\mathbf{I}.
\end{align*}
\normalsize
Using this and other bounds, we have
\par\noindent\small
\begin{align*}
    \mathbf{K}_{k}\mathbf{H}_{k}\bm{\Sigma}_{k}(\mathbf{F}_{k}-\mathbf{K}_{k}\mathbf{H}_{k})^{T}-\mathbf{M}_{k}\mathbf{K}_{k}^{T}\succeq-c\mathbf{I},
\end{align*}
\normalsize
where $c=\frac{2\overline{f}\overline{\sigma}\overline{h}(\overline{f}\overline{\sigma}\overline{h}+\beta)}{\underline{r}}+\left(2\overline{\sigma}\overline{h}^{2}+\delta+\frac{1}{2}\overline{b}^{2}\overline{\sigma}^{2}np\right)\left(\frac{\overline{f}\overline{\sigma}\overline{h}+\beta}{\underline{r}}\right)^{2}$ is a positive constant. Hence,
\par\noindent\small
\begin{align*}
    \bm{\Sigma}_{k+1}\succeq(\mathbf{F}_{k}-\mathbf{K}_{k}\mathbf{H}_{k})\bm{\Sigma}_{k}(\mathbf{F}_{k}-\mathbf{K}_{k}\mathbf{H}_{k})^{T}+\mathbf{Q}_{k}-c\mathbf{I}.
\end{align*}
\normalsize
Similar to the proof of \cite[Lemma 3.1]{reif1999stochastic}, we can prove $\mathbf{F}_{k}-\mathbf{K}_{k}\mathbf{H}_{k}$ to be invertible using matrix inversion lemma. Here,
\par\noindent\small
\begin{align*}
    \mathbf{F}_{k}^{-1}(\mathbf{F}_{k}-\mathbf{K}_{k}\mathbf{H}_{k})\bm{\Sigma}_{k}=\bm{\Sigma}_{k}-\bm{\Sigma}_{k}(\mathbf{H}_{k}^{T}+\bm{\Sigma}_{k}^{-1}\mathbf{F}_{k}^{-1}\mathbf{M}_{k})\mathbf{S}_{k}^{-1}\mathbf{H}_{k}\bm{\Sigma}_{k}.
\end{align*}
\normalsize
The R.H.S. is in the form of matrix inversion lemma:
\par\noindent\small
\begin{align*}
    (\mathbf{A}+\mathbf{U}\mathbf{C}\mathbf{V})^{-1}=\mathbf{A}^{-1}-\mathbf{A}^{-1}\mathbf{U}(\mathbf{C}^{-1}+\mathbf{V}\mathbf{A}^{-1}\mathbf{U})^{-1}\mathbf{V}\mathbf{A}^{-1},
\end{align*}
\normalsize
where $\mathbf{A}$ and $\mathbf{C}$ are invertible matrices. Hence, the matrix inversion lemma is applicable if $\mathbf{R}_{k}+\frac{1}{2}\sum_{i=1}^{p}\sum_{j=1}^{p}\mathbf{b}_{i}\mathbf{b}_{j}^{T}\textrm{Tr}\left(\nabla^{2}\left[h(\hat{\mathbf{x}}_{k})\right]_{i}\bm{\Sigma}_{k}\nabla^{2}\left[h(\hat{\mathbf{x}}_{k})\right]_{j}\bm{\Sigma}_{k}\right)-\mathbf{H}_{k}\mathbf{F}_{k}^{-1}\mathbf{M}_{k}$ is invertible (because $\mathbf{C}$ needs to be invertible). Since the difference of two matrices $\mathbf{X}-\mathbf{Y}$ is invertible if maximum singular value of $\mathbf{Y}$ is strictly less than the minimum singular value of $\mathbf{X}$, the required difference is invertible if $\|\mathbf{H}_{k}\mathbf{F}_{k}^{-1}\mathbf{M}_{k}\|<\underline{r}$ because $\mathbf{R}_{k}+\frac{1}{2}\sum_{i=1}^{p}\sum_{j=1}^{p}\mathbf{b}_{i}\mathbf{b}_{j}^{T}\textrm{Tr}\left(\nabla^{2}\left[h(\hat{\mathbf{x}}_{k})\right]_{i}\bm{\Sigma}_{k}\nabla^{2}\left[h(\hat{\mathbf{x}}_{k})\right]_{j}\bm{\Sigma}_{k}\right)\succeq\underline{r}\mathbf{I}$. But $\|\mathbf{H}_{k}\mathbf{F}_{k}^{-1}\mathbf{M}_{k}\|\leq\overline{h}\beta\|\mathbf{F}_{k}^{-1}\|$. Substituting the value of $\beta$ as derived in Lemma \ref{lemma: SOEKF stable bounds}, the sufficient condition for invertibility of the required matrix is
\par\noindent\small
\begin{align*}
    \|\mathbf{F}_{k}^{-1}\|<\frac{2\underline{r}}{\overline{h}\overline{a}\overline{b}\overline{\sigma}^{2}n\sqrt{np}}.
\end{align*}
\normalsize
The condition \eqref{eqn: SOEKF stable constraint on inverse norm} is sufficient for this to be satisfied. Note that this condition is a sufficient but not necessary condition for invertibility of $\mathbf{F}_{k}-\mathbf{K}_{k}\mathbf{H}_{k}$. With this condition, Lemma \ref{lemma: SOEKF stable alpha term} can be proved using similar approach as in \cite[Lemma 3.1]{reif1999stochastic} with
\par\noindent\small
\begin{align*}
    1-\alpha=\left(1+\frac{\underline{q}-c}{\overline{\sigma}(\overline{f}+(\overline{f}\overline{\sigma}\overline{h}^{2}+\beta\overline{h})/\underline{r})^{2}}\right)^{-1}.
\end{align*}
\normalsize
The condition \eqref{eqn: SOEKF stable constraint on q} is sufficient for $0<\alpha<1$. 

Substituting for constant $c$ and using $\underline{q}\leq\delta$ and $\underline{r}\leq\delta$, the inequality \eqref{eqn: SOEKF stable constraint on q} requires the following inequality to be satisfied
\par\noindent\small
\begin{align*}
    \frac{2\overline{f}\overline{\sigma}\overline{h}(\overline{f}\overline{\sigma}\overline{h}+\beta)}{\delta}+\left(2\overline{\sigma}\overline{h}^{2}+\delta+\frac{1}{2}\overline{b}^{2}\overline{\sigma}^{2}np\right)\left(\frac{\overline{f}\overline{\sigma}\overline{h}+\beta}{\delta}\right)^{2}<\delta.
\end{align*}
\normalsize
Rearranging the terms, we have
\par\noindent\small
\begin{align*}
    \delta^{3}-(\overline{f}\overline{\sigma}\overline{h}+\beta)(3\overline{f}\overline{\sigma}\overline{h}+\beta)\delta-(\overline{f}\overline{\sigma}\overline{h}+\beta)^{2}\left(2\overline{\sigma}\overline{h}^{2}+\frac{1}{2}\overline{b}^{2}\overline{\sigma}^{2}np\right)> 0.
\end{align*}
\normalsize
It can be observed that this inequality requires the noise bound $\delta$ to be large enough and other bounds on matrices like $\overline{f},\overline{h}$ etc. to be small.
\end{proof}

\begin{lemma}
\label{lemma: SOEKF stable r term}
Under the assumptions of Theorem \ref{theorem: forward SOEKF stability}, there exists positive real constants $\kappa_\textrm{nonl}$, $\epsilon'$ such that
\par\noindent\small
\begin{align*}
    \mathbf{r}_{k}^{T}\bm{\Sigma}_{k+1}^{-1}(2(\mathbf{F}_{k}-\mathbf{K}_{k}\mathbf{H}_{k})\mathbf{e}_{k}+\mathbf{r}_{k})\leq\kappa_\textrm{nonl}\|\mathbf{e}_{k}\|_{2}^{4},
\end{align*}
\normalsize
for $\|\mathbf{e}_{k}\|_{2}\leq\epsilon'$.
\end{lemma}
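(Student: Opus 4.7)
The plan is to bound $\mathbf{r}_k$ in norm using the third-order remainder bounds from Assumption~3 of Theorem~\ref{theorem: forward SOEKF stability}, and then apply Cauchy--Schwarz together with the uniform bounds from Assumption~1 to control the cross term and the quadratic term in the target expression separately.

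First, I would set $\epsilon' = \min\{\epsilon_\phi, \epsilon_\chi, 1\}$, so that whenever $\|\mathbf{e}_k\|_2 \leq \epsilon'$ both inequalities $\|\phi(\mathbf{x}_k,\hat{\mathbf{x}}_k)\|_2 \leq \kappa_\phi \|\mathbf{e}_k\|_2^3$ and $\|\chi(\mathbf{x}_k,\hat{\mathbf{x}}_k)\|_2 \leq \kappa_\chi \|\mathbf{e}_k\|_2^3$ are simultaneously valid. Using the bounds on $\mathbf{F}_k$, $\mathbf{H}_k$, $\bm{\Sigma}_k$, $\mathbf{M}_k$ and the lower bound $\mathbf{S}_k \succeq \underline{r}\mathbf{I}$ (the latter following from $\mathbf{R}_k \succeq \underline{r}\mathbf{I}$ and the positive semidefiniteness established in Lemma~\ref{lemma: SOEKF stable bounds}), the gain satisfies $\|\mathbf{K}_k\| \leq (\overline{f}\,\overline{\sigma}\,\overline{h} + \beta)/\underline{r} \doteq \overline{k}$ uniformly in $k$. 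The triangle inequality then yields $\|\mathbf{r}_k\|_2 \leq (\kappa_\phi + \overline{k}\,\kappa_\chi)\|\mathbf{e}_k\|_2^3 \doteq \kappa_r \|\mathbf{e}_k\|_2^3$.

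Next, I would split the target expression as
\[
\mathbf{r}_k^T \bm{\Sigma}_{k+1}^{-1}\bigl(2(\mathbf{F}_k - \mathbf{K}_k\mathbf{H}_k)\mathbf{e}_k + \mathbf{r}_k\bigr) = 2\mathbf{r}_k^T \bm{\Sigma}_{k+1}^{-1}(\mathbf{F}_k - \mathbf{K}_k\mathbf{H}_k)\mathbf{e}_k + \mathbf{r}_k^T \bm{\Sigma}_{k+1}^{-1} \mathbf{r}_k.
\]
Since $\bm{\Sigma}_{k+1} \succeq \underline{\sigma}\mathbf{I}$, we have $\|\bm{\Sigma}_{k+1}^{-1}\| \leq 1/\underline{\sigma}$. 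Applying Cauchy--Schwarz to the cross term with $\|\mathbf{F}_k - \mathbf{K}_k\mathbf{H}_k\| \leq \overline{f} + \overline{k}\,\overline{h}$ gives a bound of $(2/\underline{\sigma})(\overline{f} + \overline{k}\,\overline{h})\kappa_r\|\mathbf{e}_k\|_2^4$. For the quadratic term, $(1/\underline{\sigma})\|\mathbf{r}_k\|_2^2 \leq (\kappa_r^2/\underline{\sigma})\|\mathbf{e}_k\|_2^6$, which under the restriction $\|\mathbf{e}_k\|_2 \leq \epsilon' \leq 1$ is further bounded by $(\kappa_r^2/\underline{\sigma})(\epsilon')^2 \|\mathbf{e}_k\|_2^4$. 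Adding these yields the conclusion with
\[
\kappa_{\textrm{nonl}} = \frac{2(\overline{f}+\overline{k}\,\overline{h})\kappa_r + \kappa_r^2 (\epsilon')^2}{\underline{\sigma}}.
\]

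The argument is essentially algebraic; the only mild subtlety is the step that converts the natural $O(\|\mathbf{e}_k\|_2^6)$ scaling of $\mathbf{r}_k^T\bm{\Sigma}_{k+1}^{-1}\mathbf{r}_k$ into the required $O(\|\mathbf{e}_k\|_2^4)$ rate, which forces the restriction $\|\mathbf{e}_k\|_2 \leq \epsilon'$. The main bookkeeping task is to verify that every constant absorbed into $\kappa_{\textrm{nonl}}$ depends only on the system-wide uniform bounds of Theorem~\ref{theorem: forward SOEKF stability} and not on the realization of $\hat{\mathbf{x}}_k$, so that the inequality holds along every sample path. No Lyapunov or contraction argument is needed here; the lemma is a direct consequence of the cubic remainder estimates on $\phi$ and $\chi$ combined with the uniform operator-norm bounds already invoked in Lemmas~\ref{lemma: SOEKF stable bounds} and \ref{lemma: SOEKF stable alpha term}.
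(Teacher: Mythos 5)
Your proposal is correct and follows essentially the same route as the paper, which simply invokes the analogue of Lemma~3.2 of \cite{reif1999stochastic}: bound $\|\mathbf{r}_{k}\|_{2}\leq\kappa'\|\mathbf{e}_{k}\|_{2}^{3}$ via the gain bound $\|\mathbf{K}_{k}\|\leq(\overline{f}\overline{\sigma}\overline{h}+\beta)/\underline{r}$, split into the cross and quadratic terms, use $\bm{\Sigma}_{k+1}\succeq\underline{\sigma}\mathbf{I}$, and absorb the extra power of $\|\mathbf{e}_{k}\|_{2}$ using $\|\mathbf{e}_{k}\|_{2}\leq\epsilon'$. Your constant $\kappa_{\textrm{nonl}}$ matches the paper's exactly (your $\kappa_{r}$ is the paper's $\kappa'$ and your $\overline{k}\,\overline{h}$ equals $(\overline{f}\overline{\sigma}\overline{h}^{2}+\beta\overline{h})/\underline{r}$); the extra clamp $\epsilon'\leq 1$ you impose is harmless but unnecessary.
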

\begin{proof}
The proof follows from \cite[Lemma 3.2]{reif1999stochastic} with $\kappa_\textrm{nonl}=\frac{\kappa'}{\underline{\sigma}}\left(2\left(\overline{f}+\frac{\overline{f}\overline{\sigma}\overline{h}^{2}+\beta\overline{h}}{\underline{r}}\right)+\kappa'\epsilon'^{2}\right)$, where $\kappa'=\kappa_{\phi}+\kappa_{\chi}\frac{\overline{f}\overline{\sigma}\overline{h}+\beta}{\underline{r}}$ and $\epsilon'=\textrm{min}(\epsilon_{\phi},\epsilon_{\chi})$.
\end{proof}

\begin{lemma}
\label{lemma: SOEKF stable noise term}
Under the assumptions of Theorem \ref{theorem: forward SOEKF stability}, there exists positive real constant $\kappa_\textrm{noise}$ independent of $\delta$ such that
\par\noindent\small
\begin{align*}
\mathbb{E}[\mathbf{s}_{k}^{T}\bm{\Sigma}_{k+1}^{-1}\mathbf{s}_{k}]\leq\kappa_\textrm{noise}\delta.
\end{align*}
\normalsize
\end{lemma}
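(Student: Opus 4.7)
The plan is to bound the scalar $\mathbf{s}_k^T \bm{\Sigma}_{k+1}^{-1} \mathbf{s}_k$ by rewriting it as a trace and then extracting the linear dependence on $\delta$ from the noise covariances $\mathbf{Q}_k$ and $\mathbf{R}_k$, while keeping all other factors independent of $\delta$. First I would use the cyclic property of trace and the independence of $\mathbf{w}_k$ and $\mathbf{v}_k$ to write
\[
\mathbb{E}[\mathbf{s}_k^T \bm{\Sigma}_{k+1}^{-1} \mathbf{s}_k] = \mathrm{Tr}\!\left(\bm{\Sigma}_{k+1}^{-1}\,\mathbb{E}[\mathbf{s}_k \mathbf{s}_k^T]\right) = \mathrm{Tr}\!\left(\bm{\Sigma}_{k+1}^{-1}(\mathbf{Q}_k + \mathbf{K}_k \mathbf{R}_k \mathbf{K}_k^T)\right),
\]
using that $\mathbf{K}_k$ is deterministic given the filter state.

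Next I would use the lower bound $\bm{\Sigma}_{k+1} \succeq \underline{\sigma}\mathbf{I}$ (from the assumptions of Theorem~\ref{theorem: forward SOEKF stability}) to get $\bm{\Sigma}_{k+1}^{-1} \preceq (1/\underline{\sigma})\mathbf{I}$, and the upper bounds $\mathbf{Q}_k \preceq \delta \mathbf{I}$ and $\mathbf{R}_k \preceq \delta \mathbf{I}$. Combined with the submultiplicativity of the trace for p.s.d.\ matrices, this gives
\[
\mathrm{Tr}\!\left(\bm{\Sigma}_{k+1}^{-1}(\mathbf{Q}_k + \mathbf{K}_k \mathbf{R}_k \mathbf{K}_k^T)\right) \leq \frac{\delta}{\underline{\sigma}}\,\mathrm{Tr}(\mathbf{I}_n) + \frac{\delta}{\underline{\sigma}}\,\mathrm{Tr}(\mathbf{K}_k \mathbf{K}_k^T) \leq \frac{\delta}{\underline{\sigma}}\left(n + n\,\|\mathbf{K}_k\|^2\right).
\]

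It then remains to bound $\|\mathbf{K}_k\|$ by a constant independent of $\delta$. From the Kalman gain expression $\mathbf{K}_k = (\mathbf{F}_k \bm{\Sigma}_k \mathbf{H}_k^T + \mathbf{M}_k)\mathbf{S}_k^{-1}$, the bounds of Theorem~\ref{theorem: forward SOEKF stability}, Lemma~\ref{lemma: SOEKF stable bounds}, and $\mathbf{S}_k \succeq \underline{r}\mathbf{I}$, we obtain $\|\mathbf{K}_k\| \leq (\overline{f}\overline{\sigma}\overline{h} + \beta)/\underline{r}$, which involves only $\overline{f}, \overline{\sigma}, \overline{h}, \beta, \underline{r}$ and in particular does not depend on $\delta$. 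Setting
\[
\kappa_\textrm{noise} = \frac{n}{\underline{\sigma}}\left(1 + \left(\frac{\overline{f}\overline{\sigma}\overline{h} + \beta}{\underline{r}}\right)^{\!2}\right)
\]
yields the claimed bound. The only point requiring care is confirming that every constant used to bound $\|\mathbf{K}_k\|$ is indeed $\delta$-free; in particular, $\underline{r}$ is a lower bound on $\mathbf{R}_k$ assumed a priori and is not coupled to the upper bound $\delta$, so $\kappa_\textrm{noise}$ remains independent of $\delta$ as required.
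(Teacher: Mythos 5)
Your proof is correct and follows essentially the same route as the paper, which simply invokes \cite[Lemma 3.3]{reif1999stochastic}: split $\mathbf{s}_k=\mathbf{w}_k-\mathbf{K}_k\mathbf{v}_k$, let the cross term vanish by independence of the noises, bound $\bm{\Sigma}_{k+1}^{-1}\preceq(1/\underline{\sigma})\mathbf{I}$ and the noise covariances by $\delta$, and bound $\|\mathbf{K}_k\|$ by a $\delta$-free constant. Your constant $\frac{n}{\underline{\sigma}}\left(1+\left((\overline{f}\overline{\sigma}\overline{h}+\beta)/\underline{r}\right)^{2}\right)$ differs only harmlessly from the paper's $\frac{n}{\underline{\sigma}}+\frac{\overline{f}^{2}\overline{h}^{2}\overline{\sigma}^{2}p}{\underline{\sigma}\,\underline{r}^{2}}$ (the $n$ versus $p$ comes from two equally valid trace bounds), and in fact yours correctly carries the $\beta$ contribution from the SOEKF gain $\mathbf{K}_k=(\mathbf{F}_k\bm{\Sigma}_k\mathbf{H}_k^{T}+\mathbf{M}_k)\mathbf{S}_k^{-1}$, which the paper's quoted constant---apparently inherited verbatim from the EKF setting of the cited reference---omits.
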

\begin{proof}
The proof follows from \cite[Lemma 3.3]{reif1999stochastic} with $\kappa_\textrm{noise}=\frac{n}{\underline{\sigma}}+\frac{\overline{f}^{2}\overline{h}^{2}\overline{\sigma}^{2}p}{\underline{\sigma}\underline{r}^{2}}$.
\end{proof}

\begin{lemma}
\label{lemma: SOEKF stable q term}
Under the assumptions of Theorem \ref{theorem: forward SOEKF stability}, there exist positive real constants $\kappa_\textrm{sec}$, $c_\textrm{sec}$ such that
\par\noindent\small
\begin{align*}
\mathbf{q}_{k}^{T}\bm{\Sigma}_{k+1}^{-1}(2(\mathbf{F}_{k}-\mathbf{K}_{k}\mathbf{H}_{k})\mathbf{e}_{k}+2\mathbf{r}_{k}+\mathbf{q}_{k})\leq\kappa_\textrm{sec}\|\mathbf{e}_{k}\|_{2}^{3}+c_\textrm{sec},
\end{align*}
\normalsize
for $\|\mathbf{e}_{k}\|_{2}\leq\epsilon'$ where $\epsilon'$ is same as in Lemma \ref{lemma: SOEKF stable r term}.
\end{lemma}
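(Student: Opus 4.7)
The plan is to derive triangle-inequality norm bounds on $\mathbf{q}_k$ and on the bracket $2(\mathbf{F}_k-\mathbf{K}_k\mathbf{H}_k)\mathbf{e}_k+2\mathbf{r}_k+\mathbf{q}_k$ separately, then apply Cauchy--Schwarz via $\|\bm{\Sigma}_{k+1}^{-1}\|\le 1/\underline{\sigma}$ and collect terms by powers of $\|\mathbf{e}_k\|_2$. The critical observation is that $\mathbf{q}_k$ contains trace correction terms that do \emph{not} vanish at $\mathbf{e}_k=\mathbf{0}$, so $\|\mathbf{q}_k\|_2$ splits naturally as $c_q+\kappa_q\|\mathbf{e}_k\|_2^2$ rather than being $O(\|\mathbf{e}_k\|_2^2)$ alone. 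This asymmetry is what forces the constant $c_\textrm{sec}$ into the final bound.

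First I would bound $\|\mathbf{q}_k\|_2$. Using $\|\widetilde{\mathbf{a}}_i\|_2=\|\mathbf{b}_i\|_2=1$, the Hessian spectral bounds $\|\nabla^2[f(\hat{\mathbf{x}}_k)]_i\|\le\max(|\underline{a}|,\overline{a})$ and $\|\nabla^2[h(\hat{\mathbf{x}}_k)]_i\|\le\max(|\underline{b}|,\overline{b})$, the covariance bound $\bm{\Sigma}_k\preceq\overline{\sigma}\mathbf{I}$, and the gain bound $\|\mathbf{K}_k\|\le(\overline{f}\overline{\sigma}\overline{h}+\beta)/\underline{r}$ already established inside the proof of Lemma~\ref{lemma: SOEKF stable alpha term}, the triangle inequality gives $\|\mathbf{q}_k\|_2\le c_q+\kappa_q\|\mathbf{e}_k\|_2^2$ for explicit $c_q,\kappa_q>0$. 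For the bracket, the operator norm of $\mathbf{F}_k-\mathbf{K}_k\mathbf{H}_k$ is bounded by $\overline{f}+\|\mathbf{K}_k\|\overline{h}$; and from assumption~3 of Theorem~\ref{theorem: forward SOEKF stability}, $\|\mathbf{r}_k\|_2\le(\kappa_\phi+\|\mathbf{K}_k\|\kappa_\chi)\|\mathbf{e}_k\|_2^3=:\kappa_r\|\mathbf{e}_k\|_2^3$ whenever $\|\mathbf{e}_k\|_2\le\epsilon'=\min(\epsilon_\phi,\epsilon_\chi)$. Hence the bracket has norm at most $2C_1\|\mathbf{e}_k\|_2+2\kappa_r\|\mathbf{e}_k\|_2^3+c_q+\kappa_q\|\mathbf{e}_k\|_2^2$ with $C_1=\overline{f}+\|\mathbf{K}_k\|\overline{h}$.

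Next, combining these two factor bounds with $\|\bm{\Sigma}_{k+1}^{-1}\|\le 1/\underline{\sigma}$ and applying Cauchy--Schwarz to the scalar quantity of interest yields a polynomial in $\|\mathbf{e}_k\|_2$ of degrees $0,1,2,3,4,5$. Using $\|\mathbf{e}_k\|_2\le\epsilon'$, every term of degree $j\in\{0,1,2\}$ is bounded by the constant $\epsilon'^{\,j}$ times its coefficient, which I absorb into $c_\textrm{sec}$; every term of degree $j\ge 4$ is bounded by $\epsilon'^{\,j-3}\|\mathbf{e}_k\|_2^3$ times its coefficient, and together with the genuine degree-$3$ terms these populate $\kappa_\textrm{sec}\|\mathbf{e}_k\|_2^3$. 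The resulting constants $\kappa_\textrm{sec},c_\textrm{sec}>0$ depend only on the a priori bounds from Theorem~\ref{theorem: forward SOEKF stability} and on $\epsilon'$, and in particular are independent of $k$ since all the block bounds are uniform in $k$.

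The main obstacle is not conceptual but bookkeeping: carefully separating the purely constant contribution of the trace part of $\mathbf{q}_k$ (which propagates into both $c_q\cdot c_q$ via the $\mathbf{q}_k^T\bm{\Sigma}_{k+1}^{-1}\mathbf{q}_k$ term and $c_q\cdot 2C_1\|\mathbf{e}_k\|_2$ via the cross term) from the genuinely $\|\mathbf{e}_k\|_2^3$-scaling contributions, while simultaneously verifying that no hidden $k$-dependence slips in. One must also take care that the truncation $\|\mathbf{e}_k\|_2\le\epsilon'$ is the same $\epsilon'$ used in Lemma~\ref{lemma: SOEKF stable r term}, so that both lemmas can be combined without further shrinking the admissible error ball in Theorem~\ref{theorem: forward SOEKF stability}.
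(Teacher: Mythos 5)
Your proposal is correct and follows essentially the same route as the paper: both bound $\|\mathbf{q}_{k}\|_{2}\leq\kappa_{q}\|\mathbf{e}_{k}\|_{2}^{2}+c_{q}$ via the Hessian, covariance, and gain bounds, bound $\|\mathbf{r}_{k}\|_{2}$ and $\|\mathbf{F}_{k}-\mathbf{K}_{k}\mathbf{H}_{k}\|$ as in Lemma~\ref{lemma: SOEKF stable r term}, and then use $\bm{\Sigma}_{k+1}\succeq\underline{\sigma}\mathbf{I}$ with Cauchy--Schwarz to absorb the resulting polynomial in $\|\mathbf{e}_{k}\|_{2}$ into $\kappa_\textrm{sec}\|\mathbf{e}_{k}\|_{2}^{3}+c_\textrm{sec}$ using $\|\mathbf{e}_{k}\|_{2}\leq\epsilon'$. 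The only (immaterial) difference is bookkeeping of where the cross term $c_{q}\|\mathbf{r}_{k}\|_{2}$ is absorbed.
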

\begin{proof}
Using the upper bound on $\nabla^{2}\left[f(\hat{\mathbf{x}}_{k})\right]_{i}$ from the assumptions of Theorem \ref{theorem: forward SOEKF stability}, we have
\par\noindent\small
\begin{align*}
&\mathbf{e}_{k}^{T}\nabla^{2}\left[f(\hat{\mathbf{x}}_{k})\right]_{i}\mathbf{e}_{k}\leq\overline{a}\|\mathbf{e}_{k}\|_{2}^{2},
\end{align*}
which implies
\begin{align*}
&\|\sum_{i=1}^{n}\widetilde{\mathbf{a}}_{i}\mathbf{e}_{k}^{T}\nabla^{2}\left[f(\hat{\mathbf{x}}_{k})\right]_{i}\mathbf{e}_{k}\|_{2}\leq\overline{a}n\|\mathbf{e}_{k}\|_{2}^{2}.
\end{align*}
\normalsize
Similarly, using other bounds, it is trivial to show that for all $k\geq 0$, the following bounds are satisfied
\par\noindent\small
\begin{align*}
&\|\sum_{i=1}^{n}\widetilde{\mathbf{a}}_{i}\textrm{Tr}\left(\nabla^{2}\left[f(\hat{\mathbf{x}}_{k})\right]_{i}\bm{\Sigma}_{k}\right)\|_{2}\leq\overline{a}\overline{\sigma}n^{2},\\
&\|\mathbf{K}_{k}\sum_{i=1}^{p}\mathbf{b}_{i}\mathbf{e}_{k}^{T}\nabla^{2}\left[h(\hat{\mathbf{x}}_{k})\right]_{i}\mathbf{e}_{k}\|_{2}\leq\frac{\overline{b}p(\overline{f}\overline{\sigma}\overline{h}+\beta)}{\underline{r}}\|\mathbf{e}_{k}\|_{2}^{2},\\
&\|\mathbf{K}_{k}\sum_{i=1}^{p}\mathbf{b}_{i}\textrm{Tr}\left(\nabla^{2}\left[h(\hat{\mathbf{x}}_{k})\right]_{i}\bm{\Sigma}_{k}\right)\|_{2}\leq\frac{\overline{b}\overline{\sigma}np(\overline{f}\overline{\sigma}\overline{h}+\beta)}{\underline{r}}.
\end{align*}
\normalsize
Using these bounds, we have
\par\noindent\small
\begin{align*}
    \|\mathbf{q}_{k}\|_{2}\leq\kappa_{q}\|\mathbf{e}_{k}\|_{2}^{2}+c_{q},
\end{align*}
\normalsize
where $\kappa_{q}=\frac{1}{2}\left(\overline{a}n+\frac{\overline{b}p(\overline{f}\overline{\sigma}\overline{h}+\beta)}{\underline{r}}\right)$ and $c_{q}=\frac{1}{2}\left(\overline{a}\overline{\sigma}n^{2}+\frac{\overline{b}\overline{\sigma}np(\overline{f}\overline{\sigma}\overline{h}+\beta)}{\underline{r}}\right)$.

Using the bounds on $\|\mathbf{q}_{k}\|_{2}$ and $\|\mathbf{r}_{k}\|_{2}$ as used in \cite[Lemma 3.2]{reif1999stochastic}, the lemma can be proved with
\footnotesize
\begin{align*}
&\kappa_\textrm{sec}=\frac{\kappa_{q}}{\underline{\sigma}}\left(2\left(\overline{f}+\frac{\overline{f}\overline{\sigma}\overline{h}^{2}+\beta\overline{h}}{\underline{r}}\right)+2\kappa'\epsilon'^{2}+\kappa_{q}\epsilon'\right),\text{and}\\
&c_\textrm{sec}=\frac{c_{q}^{2}}{\underline{\sigma}}+\frac{\kappa_{q}c_{q}\epsilon'^{2}}{\underline{\sigma}}+\frac{c_{q}\epsilon'}{\underline{\sigma}}\left(2\left(\overline{f}+\frac{\overline{f}\overline{\sigma}\overline{h}^{2}+\beta\overline{h}}{\underline{r}}\right)+2\kappa'\epsilon'^{2}+\kappa_{q}\epsilon'\right).
\end{align*}
\normalsize
\end{proof}

\subsection{Proof of the theorem}
Consider $V_{k}(\mathbf{e}_{k})=\mathbf{e}_{k}^{T}\bm{\Sigma}_{k}^{-1}\mathbf{e}_{k}$. Using the bounds on $\bm{\Sigma}_{k}$ from assumptions of Theorem \ref{theorem: forward SOEKF stability}, we have
\par\noindent\small
\begin{align*}
    \frac{1}{\overline{\sigma}}\|\mathbf{e}_{k}\|_{2}^{2}\leq V_{k}(\mathbf{e}_{k})\leq\frac{1}{\underline{\sigma}}\|\mathbf{e}_{k}\|_{2}^{2}.
\end{align*}
\normalsize
Also, substituting for $\mathbf{e}_{k+1}$ using \eqref{eqn: forward SOEKF error}, we have
\par\noindent\small
\begin{align*}
V_{k+1}(\mathbf{e}_{k+1})&=\mathbf{e}_{k}^{T}(\mathbf{F}_{k}-\mathbf{K}_{k}\mathbf{H}_{k})^{T}\bm{\Sigma}_{k+1}^{-1}(\mathbf{F}_{k}-\mathbf{K}_{k}\mathbf{H}_{k})\mathbf{e}_{k}\\
&+\mathbf{r}_{k}^{T}\bm{\Sigma}_{k+1}^{-1}(2(\mathbf{F}_{k}-\mathbf{K}_{k}\mathbf{H}_{k})\mathbf{e}_{k}+\mathbf{r}_{k})\\
&+\mathbf{q}_{k}^{T}\bm{\Sigma}_{k+1}^{-1}(2(\mathbf{F}_{k}-\mathbf{K}_{k}\mathbf{H}_{k})\mathbf{e}_{k}+2\mathbf{r}_{k}+\mathbf{q}_{k})\\
&+\mathbf{s}_{k}^{T}\bm{\Sigma}_{k+1}^{-1}\mathbf{s}_{k}+2\mathbf{s}_{k}^{T}\bm{\Sigma}_{k+1}^{-1}((\mathbf{F}_{k}-\mathbf{K}_{k}\mathbf{H}_{k})\mathbf{e}_{k}+\mathbf{r}_{k}+\mathbf{q}_{k}).
\end{align*}
\normalsize
Using Lemmata \ref{lemma: SOEKF stable alpha term}, \ref{lemma: SOEKF stable r term}, and \ref{lemma: SOEKF stable q term}, we have for $\|\mathbf{e}_{k}\|_{2}\leq\epsilon'$
\par\noindent\small
\begin{align*}
V_{k+1}(\mathbf{e}_{k+1})\leq&(1-\alpha)V_{k}(\mathbf{e}_{k})+\kappa_\textrm{nonl}\|\mathbf{e}_{k}\|_{2}^{4}+\kappa_\textrm{sec}\|\mathbf{e}_{k}\|_{2}^{3}+c_\textrm{sec}\\
&+\mathbf{s}_{k}^{T}\bm{\Sigma}_{k+1}^{-1}\mathbf{s}_{k}+2\mathbf{s}_{k}^{T}\bm{\Sigma}_{k+1}^{-1}((\mathbf{F}_{k}-\mathbf{K}_{k}\mathbf{H}_{k})\mathbf{e}_{k}+\mathbf{r}_{k}+\mathbf{q}_{k}).
\end{align*}
\normalsize
The last term $\mathbf{s}_{k}^{T}\bm{\Sigma}_{k+1}^{-1}((\mathbf{F}_{k}-\mathbf{K}_{k}\mathbf{H}_{k})\mathbf{e}_{k}+\mathbf{r}_{k}+\mathbf{q}_{k})$ vanishes on taking expectation conditioned on $\mathbf{e}_{k}$ and hence, for $\|\mathbf{e}_{k}\|_{2}\leq\epsilon'$,
\par\noindent\small
\begin{align*}
\mathbb{E}[]V_{k+1}(\mathbf{e}_{k+1})\vert\mathbf{e}_{k}]\leq&(1-\alpha)V_{k}(\mathbf{e}_{k})+\kappa_\textrm{nonl}\|\mathbf{e}_{k}\|_{2}^{4}+\kappa_\textrm{sec}\|\mathbf{e}_{k}\|_{2}^{3}\\
&+c_\textrm{sec}+\kappa_{\textrm{noise}}\delta,
\end{align*}
\normalsize
where the bound of Lemma \ref{lemma: SOEKF stable noise term} is applied. But, for $\|\mathbf{e}_{k}\|_{2}\leq\epsilon'$,
\par\noindent\small
\begin{align*}
\kappa_\textrm{nonl}\|\mathbf{e}_{k}\|_{2}^{4}+\kappa_\textrm{sec}\|\mathbf{e}_{k}\|_{2}^{3}\leq(\kappa_\textrm{nonl}\epsilon'+\kappa_\textrm{sec})\|\mathbf{e}_{k}\|_{2}\|\mathbf{e}_{k}\|_{2}^{2}.
\end{align*}
\normalsize
Choosing $\epsilon=\textrm{min}\left(\epsilon',\frac{\alpha}{2\overline{\sigma}(\kappa_\textrm{nonl}\epsilon'+\kappa_\textrm{sec})}\right)$, we have for $\|\mathbf{e}_{k}\|_{2}\leq\epsilon$,
\par\noindent\small
\begin{align*}
\kappa_\textrm{nonl}\|\mathbf{e}_{k}\|_{2}^{4}+\kappa_\textrm{sec}\|\mathbf{e}_{k}\|_{2}^{3}\leq\frac{\alpha}{2}V_{k}(\mathbf{e}_{k}),
\end{align*}
which implies
\begin{align*}
\mathbb{E}[V_{k+1}(\mathbf{e}_{k+1})\vert\mathbf{e}_{k}]-V_{k}(\mathbf{e}_{k})\leq-\frac{\alpha}{2}V_{k}(\mathbf{e}_{k})+c_\textrm{sec}+\kappa_\textrm{noise}\delta.
\end{align*}
\normalsize
Hence, Lemma 1 of the companion paper (Part I) \cite{singh2022inverse_part1} applies here. However, to have negative mean drift, we require $\delta$ to be small enough such that there exists some $\widetilde{\epsilon}<\epsilon$ to satisfy \eqref{eqn: SOEKF stable constraint on delta}. This condition ensures that  for $\widetilde{\epsilon}\leq\|\mathbf{e}_{k}\|_{2}\leq\epsilon$, $\mathbb{E}[ V_{k+1}(\mathbf{e}_{k+1})\vert\mathbf{e}_{k}]-V_{k}(\mathbf{e}_{k})\leq 0$ is fulfilled and the estimation error $\mathbf{e}_{k}$ remains exponentially bounded in mean-squared sense if the error is within $\epsilon$ bound.

\section{Proof of Theorem~\ref{theorem: inverse SOEKF stability}}
\label{App-thm-inverse SOEKF stability}
From the state transition function of I-SOEKF \eqref{eqn: one step I-SOEKF state transition}, we have
\par\noindent\small
\begin{align*}
\nabla^{2}\left[\overline{f}_{k}(\doublehat{\mathbf{x}}_{k})\right]_{i}=\nabla^{2}\left[f(\doublehat{\mathbf{x}}_{k})\right]_{i}-\sum_{j=1}^{p}[\mathbf{K}_{k}]_{i,j}\nabla^{2}\left[h(\doublehat{\mathbf{x}}_{k})\right]_{j}.
\end{align*}
\normalsize
Using the upper bounds on Hessian matrices from the assumptions of Theorem \ref{theorem: forward SOEKF stability}, we have
\par\noindent\small
\begin{align*}
\nabla^{2}\left[\overline{f}_{k}(\doublehat{\mathbf{x}}_{k})\right]_{i}&\preceq\left(\overline{a}-\underline{b}\sum_{j=1}^{p}[\mathbf{K}_{k}]_{i,j}\right)\mathbf{I}\preceq\left(\overline{a}+|\underline{b}|\sum_{j=1}^{p}|[\mathbf{K}_{k}]_{i,j}|\right)\mathbf{I}.
\end{align*}
\normalsize
But, the row sum $\sum_{i=1}^{p}|[\mathbf{K}_{k}]_{i,j}|\leq\|\mathbf{K}_{k}\|_{\infty}$. Further, using equivalence of norms i.e. $\|\mathbf{K}_{k}\|_{\infty}\leq\sqrt{p}\|\mathbf{K}_{k}\|$, we have
\par\noindent\small
\begin{align*}
\nabla^{2}\left[\overline{f}_{k}(\doublehat{\mathbf{x}}_{k})\right]_{i}\preceq\left(\overline{a}+|\underline{b}|\sqrt{p}\left(\frac{\overline{f}\overline{\sigma}\overline{h}+\beta}{\underline{r}}\right)\right)\mathbf{I}.
\end{align*}
\normalsize
Similarly,
\par\noindent\small
\begin{align*}
\nabla^{2}\left[\overline{f}_{k}(\doublehat{\mathbf{x}}_{k})\right]_{i}&\succeq\left(\underline{a}-\overline{b}\sum_{j=1}^{p}[\mathbf{K}_{k}]_{i,j}\right)\mathbf{I}\succeq\left(\underline{a}-\frac{\overline{b}\sqrt{p}(\overline{f}\overline{\sigma}\overline{h}+\beta)}{\underline{r}}\right)\mathbf{I}.
\end{align*}
\normalsize
Hence, with $\underline{d}=\underline{a}-(\overline{b}\sqrt{p}(\overline{f}\overline{\sigma}\overline{h}+\beta)/\underline{r})$ and $\overline{d}=\overline{a}+(|\underline{b}|\sqrt{p}(\overline{f}\overline{\sigma}\overline{h}+\beta)/\underline{r})$, we have $\underline{d}\mathbf{I}\preceq\nabla^{2}\left[\overline{f}_{k}(\doublehat{x}_{k})\right]_{i}\preceq\overline{d}\mathbf{I}$ for all $i\in\lbrace 1,2,\hdots,n\rbrace$.

The proof of the remaining conditions of Theorem \ref{theorem: forward SOEKF stability} for I-SOEKF dynamics follows from the proof of Theorem 5 in the companion paper (Part I) \cite{singh2022inverse_part1}.
\bibliographystyle{IEEEtran}
\bibliography{main}

\begin{thebibliography}{10}
\providecommand{\url}[1]{#1}
\csname url@samestyle\endcsname
\providecommand{\newblock}{\relax}
\providecommand{\bibinfo}[2]{#2}
\providecommand{\BIBentrySTDinterwordspacing}{\spaceskip=0pt\relax}
\providecommand{\BIBentryALTinterwordstretchfactor}{4}
\providecommand{\BIBentryALTinterwordspacing}{\spaceskip=\fontdimen2\font plus
\BIBentryALTinterwordstretchfactor\fontdimen3\font minus
  \fontdimen4\font\relax}
\providecommand{\BIBforeignlanguage}[2]{{%
\expandafter\ifx\csname l@#1\endcsname\relax
\typeout{** WARNING: IEEEtran.bst: No hyphenation pattern has been}%
\typeout{** loaded for the language `#1'. Using the pattern for}%
\typeout{** the default language instead.}%
\else
\language=\csname l@#1\endcsname
\fi
#2}}
\providecommand{\BIBdecl}{\relax}
\BIBdecl

\bibitem{singh2022inverse_part1}
H.~Singh, A.~Chattopadhyay, and K.~V. Mishra, ``Inverse extended {K}alman
  filter -- {P}art {I}: {F}undamentals,'' \emph{IEEE Transactions on Signal
  Processing}, 2023, in press.

\bibitem{krishnamurthy2019how}
V.~Krishnamurthy and M.~Rangaswamy, ``How to calibrate your adversary's
  capabilities? {I}nverse filtering for counter-autonomous systems,''
  \emph{IEEE Transactions on Signal Processing}, vol.~67, no.~24, pp.
  6511--6525, 2019.

\bibitem{bell2015cognitive}
K.~L. Bell, C.~J. Baker, G.~E. Smith, J.~T. Johnson, and M.~Rangaswamy,
  ``Cognitive radar framework for target detection and tracking,'' \emph{IEEE
  Journal of Selected Topics in Signal Processing}, vol.~9, no.~8, pp.
  1427--1439, 2015.

\bibitem{sharaga2015optimal}
N.~Sharaga, J.~Tabrikian, and H.~Messer, ``Optimal cognitive beamforming for
  target tracking in {MIMO} radar/sonar,'' \emph{IEEE Journal of Selected
  Topics in Signal Processing}, vol.~9, no.~8, pp. 1440--1450, 2015.

\bibitem{mattila2017inverse}
R.~Mattila, C.~Rojas, V.~Krishnamurthy, and B.~Wahlberg, ``Inverse filtering
  for hidden {M}arkov models,'' \emph{Advances in Neural Information Processing
  Systems}, vol.~30, 2017.

\bibitem{ng2000algorithms}
A.~Y. Ng, S.~J. Russell \emph{et~al.}, ``Algorithms for inverse reinforcement
  learning.'' in \emph{International Conference on Machine Learning}, vol.~1,
  2000, p.~2.

\bibitem{mattila2020inverse}
R.~{Mattila}, C.~R. {Rojas}, V.~{Krishnamurthy}, and B.~{Wahlberg}, ``Inverse
  filtering for hidden {M}arkov models with applications to counter-adversarial
  autonomous systems,'' \emph{IEEE Transactions on Signal Processing}, vol.~68,
  pp. 4987--5002, 2020.

\bibitem{krishnamurthy2021adversarial}
V.~Krishnamurthy, K.~Pattanayak, S.~Gogineni, B.~Kang, and M.~Rangaswamy,
  ``Adversarial radar inference: {I}nverse tracking, identifying cognition, and
  designing smart interference,'' \emph{IEEE Transactions on Aerospace and
  Electronic Systems}, vol.~57, no.~4, pp. 2067--2081, 2021.

\bibitem{kang2023}
B.~Kang, V.~Krishnamurthy, K.~Pattanayak, S.~Gogineni, and M.~Rangaswamy,
  ``Smart {I}nterference {S}ignal {D}esign to a {C}ognitive {R}adar,'' in
  \emph{Proceedings of the IEEE Radar Conference}, San Antonio, TX, May 2023,
  in press.

\bibitem{haykin2004kalman}
S.~Haykin, \emph{Kalman filtering and neural networks}.\hskip 1em plus 0.5em
  minus 0.4em\relax John Wiley \& Sons, 2004, vol.~47.

\bibitem{simon2006optimal}
D.~Simon, \emph{Optimal state estimation: {K}alman, H$_\infty$, and nonlinear
  approaches}.\hskip 1em plus 0.5em minus 0.4em\relax John Wiley \& Sons, 2006.

\bibitem{tenney1977tracking}
R.~Tenney, R.~Hebbert, and N.~Sandell, ``A tracking filter for maneuvering
  sources,'' \emph{IEEE Transactions on Automatic Control}, vol.~22, no.~2, pp.
  246--251, 1977.

\bibitem{zhao2020trial}
S.~Zhao and B.~Huang, ``Trial-and-error or avoiding a guess? initialization of
  the kalman filter,'' \emph{Automatica}, vol. 121, p. 109184, 2020.

\bibitem{salvoldi2018process}
M.~Salvoldi and D.~Choukroun, ``Process noise covariance design in kalman
  filtering via bounds optimization,'' \emph{IEEE Transactions on Automatic
  Control}, vol.~64, no.~2, pp. 834--840, 2018.

\bibitem{jazwinski2007stochastic}
A.~H. Jazwinski, \emph{Stochastic processes and filtering theory}.\hskip 1em
  plus 0.5em minus 0.4em\relax Courier Corporation, 2007.

\bibitem{wang2019second}
X.~Wang and E.~E. Yaz, ``Second-order fault tolerant extended kalman filter for
  discrete time nonlinear systems,'' \emph{IEEE Transactions on Automatic
  Control}, vol.~64, no.~12, pp. 5086--5093, 2019.

\bibitem{tam1977gaussian}
P.~Tam and J.~Moore, ``A {G}aussian sum approach to phase and frequency
  estimation,'' \emph{IEEE Transactions on Communications}, vol.~25, no.~9, pp.
  935--942, 1977.

\bibitem{wishner1969comparison}
R.~P. Wishner, J.~A. Tabaczynski, and M.~Athans, ``A comparison of three
  non-linear filters,'' \emph{Automatica}, vol.~5, no.~4, pp. 487--496, 1969.

\bibitem{weiss1980improved}
H.~Weiss and J.~Moore, ``Improved extended {K}alman filter design for passive
  tracking,'' \emph{IEEE Transactions on Automatic Control}, vol.~25, no.~4,
  pp. 807--811, 1980.

\bibitem{bucy1971digital}
R.~S. Bucy and K.~D. Senne, ``Digital synthesis of non-linear filters,''
  \emph{Automatica}, vol.~7, no.~3, pp. 287--298, 1971.

\bibitem{schwartz1968computational}
L.~Schwartz and E.~Stear, ``A computational comparison of several nonlinear
  filters,'' \emph{IEEE Transactions on Automatic Control}, vol.~13, no.~1, pp.
  83--86, 1968.

\bibitem{netto1978optimal}
M.~Netto, L.~Gimeno, and M.~Mendes, ``On the optimal and suboptimal nonlinear
  filtering problem for discrete-time systems,'' \emph{IEEE Transactions on
  Automatic Control}, vol.~23, no.~6, pp. 1062--1067, 1978.

\bibitem{vila2021robust}
J.~Vila-Valls, E.~Chaumette, F.~Vincent, and P.~Closas, ``Robust linearly
  constrained {K}alman filter for general mismatched linear state-space
  models,'' \emph{IEEE Transactions on Automatic Control}, 2021.

\bibitem{ge2016performance}
Q.~Ge, T.~Shao, Z.~Duan, and C.~Wen, ``Performance analysis of the {K}alman
  filter with mismatched noise covariances,'' \emph{IEEE Transactions on
  Automatic Control}, vol.~61, no.~12, pp. 4014--4019, 2016.

\bibitem{huang2017novel}
Y.~Huang, Y.~Zhang, Z.~Wu, N.~Li, and J.~Chambers, ``A novel adaptive {K}alman
  filter with inaccurate process and measurement noise covariance matrices,''
  \emph{IEEE Transactions on Automatic Control}, vol.~63, no.~2, pp. 594--601,
  2017.

\bibitem{zhu2021adaptive}
H.~Zhu, G.~Zhang, Y.~Li, and H.~Leung, ``An adaptive {K}alman filter with
  inaccurate noise covariances in the presence of outliers,'' \emph{IEEE
  Transactions on Automatic Control}, vol.~67, no.~1, pp. 374--381, 2021.

\bibitem{yi2021robust}
S.~Yi and M.~Zorzi, ``Robust {K}alman {F}iltering under model uncertainty: the
  case of degenerate densities,'' \emph{IEEE Transactions on Automatic
  Control}, 2021.

\bibitem{mohamed2011robust}
S.~M. Mohamed and S.~Nahavandi, ``Robust finite-horizon {K}alman filtering for
  uncertain discrete-time systems,'' \emph{IEEE Transactions on Automatic
  Control}, vol.~57, no.~6, pp. 1548--1552, 2011.

\bibitem{liu2009extended}
W.~Liu, I.~Park, Y.~Wang, and J.~C. Principe, ``Extended kernel recursive least
  squares algorithm,'' \emph{IEEE Transactions on Signal Processing}, vol.~57,
  no.~10, pp. 3801--3814, 2009.

\bibitem{engel2004kernel}
Y.~Engel, S.~Mannor, and R.~Meir, ``The kernel recursive least-squares
  algorithm,'' \emph{IEEE Transactions on Signal Processing}, vol.~52, no.~8,
  pp. 2275--2285, 2004.

\bibitem{zhu2011extended}
P.~Zhu, B.~Chen, and J.~C. Pr{\'\i}ncipe, ``Extended {K}alman filter using a
  kernel recursive least squares observer,'' in \emph{The 2011 International
  Joint Conference on Neural Networks}.\hskip 1em plus 0.5em minus 0.4em\relax
  IEEE, 2011, pp. 1402--1408.

\bibitem{singh2022rkhs}
U.~K. Singh, M.~Alaee-Kerahroodi, and M.~B. Shankar, ``{RKHS} based {S}tate
  {E}stimator for {R}adar {S}ensor in {I}ndoor {A}pplication,'' in \emph{2022
  IEEE Radar Conference (RadarConf22)}, 2022, pp. 01--06.

\bibitem{zhu2013learning}
P.~Zhu, B.~Chen, J.~C. Pr{\i} \emph{et~al.}, ``Learning nonlinear generative
  models of time series with a {K}alman filter in {RKHS},'' \emph{IEEE
  Transactions on Signal Processing}, vol.~62, no.~1, pp. 141--155, 2013.

\bibitem{tobar2015unsupervised}
F.~Tobar, P.~M. Djurić, and D.~P. Mandic, ``Unsupervised {S}tate-space
  modeling using {R}eproducing {K}ernels,'' \emph{IEEE Transactions on Signal
  Processing}, vol.~63, no.~19, pp. 5210--5221, 2015.

\bibitem{berntorp2021online}
K.~Berntorp, ``Online {B}ayesian inference and learning of {G}aussian-process
  state-space models,'' \emph{Automatica}, vol. 129, p. 109613, 2021.

\bibitem{wen2012data}
Q.~Wen, Z.~Ge, and Z.~Song, ``Data-based linear {G}aussian state-space model
  for dynamic process monitoring,'' \emph{AIChE journal}, vol.~58, no.~12, pp.
  3763--3776, 2012.

\bibitem{alspach1972nonlinear}
D.~Alspach and H.~Sorenson, ``Nonlinear {B}ayesian estimation using {G}aussian
  sum approximations,'' \emph{IEEE Transactions on Automatic Control}, vol.~17,
  no.~4, pp. 439--448, 1972.

\bibitem{tichavsky1998posterior}
P.~Tichavsky, C.~H. Muravchik, and A.~Nehorai, ``Posterior {C}ram{\'e}r-{R}ao
  bounds for discrete-time nonlinear filtering,'' \emph{IEEE Transactions on
  Signal Processing}, vol.~46, no.~5, pp. 1386--1396, 1998.

\bibitem{lourencco2020protect}
I.~Louren{\c{c}}o, R.~Mattila, C.~R. Rojas, and B.~Wahlberg, ``How to protect
  your privacy? {A} framework for counter-adversarial decision making,'' in
  \emph{59th IEEE Conference on Decision and Control (CDC)}, 2020, pp.
  1785--1791.

\bibitem{lourencco2021hidden}
I.~Louren{\c{c}}o, R.~Mattila, C.~R. Rojas, X.~Hu, and B.~Wahlberg, ``Hidden
  {M}arkov models: inverse filtering, belief estimation and privacy
  protection,'' \emph{Journal of Systems Science and Complexity}, vol.~34, pp.
  1801--1820, 2021.

\bibitem{pattanayak2022inverse}
K.~Pattanayak, V.~Krishnamurthy, and C.~Berry, ``Inverse-{I}nverse
  {R}einforcement {L}earning. {H}ow to hide strategy from an adversarial
  inverse reinforcement learner,'' in \emph{IEEE 61st Conference on Decision
  and Control (CDC)}, 2022, pp. 3631--3636.

\bibitem{pattanayak2022meta}
------, ``Meta-cognition. {A}n inverse-inverse reinforcement learning approach
  for cognitive radars,'' in \emph{25th International Conference on Information
  Fusion (FUSION)}.\hskip 1em plus 0.5em minus 0.4em\relax IEEE, 2022, pp.
  01--08.

\bibitem{reif1999stochastic}
K.~Reif, S.~Gunther, E.~Yaz, and R.~Unbehauen, ``Stochastic stability of the
  discrete-time extended {K}alman filter,'' \emph{IEEE Transactions on
  Automatic Control}, vol.~44, no.~4, pp. 714--728, 1999.

\bibitem{bar2004estimation}
Y.~Bar-Shalom, X.~R. Li, and T.~Kirubarajan, \emph{Estimation with applications
  to tracking and navigation: theory algorithms and software}.\hskip 1em plus
  0.5em minus 0.4em\relax John Wiley \& Sons, 2004.

\bibitem{roth2011efficientSOEKF}
M.~Roth and F.~Gustafsson, ``An efficient implementation of the second order
  extended {K}alman filter,'' in \emph{IEEE International Conference on
  Information Fusion}, 2011, pp. 1--6.

\bibitem{anderson2012optimal}
B.~D. Anderson and J.~B. Moore, \emph{Optimal filtering}.\hskip 1em plus 0.5em
  minus 0.4em\relax Courier Corporation, 2012.

\bibitem{bilik2010mmse}
I.~Bilik and J.~Tabrikian, ``{MMSE}-based filtering in presence of non-gaussian
  system and measurement noise,'' \emph{IEEE Transactions on Aerospace and
  Electronic Systems}, vol.~46, no.~3, pp. 1153--1170, 2010.

\bibitem{raitoharju2019computational}
M.~Raitoharju and R.~Pich{\'e}, ``On computational complexity reduction methods
  for {K}alman filter extensions,'' \emph{IEEE Aerospace and Electronic Systems
  Magazine}, vol.~34, no.~10, pp. 2--19, 2019.

\bibitem{aronszajn1950theory}
N.~Aronszajn, ``Theory of reproducing kernels,'' \emph{Transactions of the
  American Mathematical Society}, vol.~68, no.~3, pp. 337--404, 1950.

\bibitem{ralaivola2003dynamical}
L.~Ralaivola and F.~d'Alch{\'e} Buc, ``Dynamical modeling with kernels for
  nonlinear time series prediction,'' \emph{Advances in Neural Information
  Processing Systems}, vol.~16, 2003.

\bibitem{weifeng2009extended_krls}
W.~Liu, I.~Park, Y.~Wang, and J.~C. Principe, ``Extended {K}ernel {R}ecursive
  {L}east {S}quares {A}lgorithm,'' \emph{IEEE Transactions on Signal
  Processing}, vol.~57, no.~10, pp. 3801--3814, 2009.

\bibitem{van2006sliding}
S.~Van~Vaerenbergh, J.~Via, and I.~Santamar{\'\i}a, ``A sliding-window kernel
  {RLS} algorithm and its application to nonlinear channel identification,'' in
  \emph{2006 IEEE International Conference on Acoustics Speech and Signal
  Processing Proceedings}, vol.~5, 2006.

\bibitem{scholkopf2001generalized}
B.~Sch{\"o}lkopf, R.~Herbrich, and A.~J. Smola, ``A generalized representer
  theorem,'' in \emph{International Conference on Computational Learning
  Theory}.\hskip 1em plus 0.5em minus 0.4em\relax Springer, 2001, pp. 416--426.

\bibitem{steinwart2001influence}
I.~Steinwart, ``On the influence of the kernel on the consistency of support
  vector machines,'' \emph{Journal of Machine Learning Research}, vol.~2, pp.
  67--93, 2001.

\bibitem{hajek2015random}
B.~Hajek, \emph{Random processes for engineers}.\hskip 1em plus 0.5em minus
  0.4em\relax Cambridge university press, 2015.

\bibitem{xiong2006performance_ukf}
K.~Xiong, H.~Zhang, and C.~Chan, ``Performance evaluation of {UKF}-based
  nonlinear filtering,'' \emph{Automatica}, vol.~42, no.~2, pp. 261--270, 2006.

\bibitem{richmond2015parameter}
C.~D. Richmond and L.~L. Horowitz, ``Parameter bounds on estimation accuracy
  under model misspecification,'' \emph{IEEE Transactions on Signal
  Processing}, vol.~63, no.~9, pp. 2263--2278, 2015.

\bibitem{fortunati2017performance}
S.~Fortunati, F.~Gini, M.~S. Greco, and C.~D. Richmond, ``Performance bounds
  for parameter estimation under misspecified models: {F}undamental findings
  and applications,'' \emph{IEEE Signal Processing Magazine}, vol.~34, no.~6,
  pp. 142--157, 2017.

\end{thebibliography}
\end{document}